\author{
  \parbox{2 in}{ \centering Fabio Pasqualetti \thanks{Fabio
      Pasqualetti is with the Center for Control, Dynamical Systems
      and Computation, University of California, Santa Barbara, {\tt
        fabiopas@engineering.ucsb.edu} .}  } \quad
  \parbox{2 in}{ \centering Antonio Bicchi
         \thanks{Antonio Bicchi is with the Centro I. R. ``E. Piaggio'',
             Universit\`{a} di Pisa,
             {\tt bicchi@ing.unipi.it}  .}
         }\quad
         \parbox{2 in}{ \centering Francesco Bullo            
           \thanks{Francesco Bullo is with the Center for Control,
             Dynamical Systems and Computation, University of California,
             Santa Barbara,
             {\tt bullo@engineering.ucsb.edu} .}
         }
}
\title{Consensus Computation in Unreliable Networks:\\
  A System Theoretic Approach\thanks{This material is based upon work
    supported in part by the ARO Institute for Collaborative
    Biotechnology award DAAD19-03-D-0004, by the AFOSR MURI award
    FA9550-07-1-0528, by the Contract IST 224428 (2008) (STREP) ``CHAT
    - Control of Heterogeneous Automation Systems: Technologies for
    scalability, reconfigurability and security,'' and by the CONET,
    the Cooperating Objects Network of Excellence, funded by the
    European Commission under FP7 with contract number
    FP7-2007-2-224053. The authors thank Dr.\ Natasha Neogi for
    insightful conversations, and the reviewers for their thoughtful
    and constructive remarks.}}
\newtheorem{theorem}{Theorem}[section]
\newtheorem{definition}{Definition}
\newtheorem{lemma}{Lemma}[section]
{\theorembodyfont{\rmfamily} 
\newtheorem{remark}{Remark}

\newcommand\oprocendsymbol{\hbox{$\square$}}
\newcommand\oprocend{\relax\ifmmode\else\unskip\hfill\fi\oprocendsymbol}

\renewcommand{\natural}{{\mathbb{N}}}
\renewcommand{\int}{{\mathbb{Z}}}
\newcommand{\real}{{\mathbb{R}}}

\newcommand{\subscr}[2]{{#1}_{\textup{#2}}}
\newcommand{\until}[1]{\{1,\dots,#1\}}

\newcommand{\Ker}{\operatorname{Ker}}
\newcommand{\Image}{\operatorname{Im}}
\newcommand{\Vtar}{\mathcal{V}^*}
\newcommand{\Star}{\mathcal{S}^*}

\newcommand{\B}{\mathcal{B}}
\newcommand{\C}{\mathcal{C}}
\newcommand{\transpose}{\mathsf{T}} 




\def\Algo1{\textit{Complete Identification}}


\begin{document}

\maketitle
\begin{abstract}
  This work addresses the problem of ensuring trustworthy computation
  in a linear consensus network. A solution to this problem is
  relevant for several tasks in multi-agent systems including motion
  coordination, clock synchronization, and cooperative estimation. In
  a linear consensus network, we allow for the presence of
  \emph{misbehaving agents}, whose behavior deviate from the nominal
  consensus evolution. We model misbehaviors as unknown and
  unmeasurable inputs affecting the network, and we cast the
  misbehavior detection and identification problem into an
  unknown-input system theoretic framework. We consider two extreme
  cases of misbehaving agents, namely \emph{faulty} (non-colluding)
  and \emph{malicious} (Byzantine) agents. First, we characterize the
  set of inputs that allow misbehaving agents to affect the consensus
  network while remaining undetected and/or unidentified from certain
  observing agents. Second, we provide worst-case bounds for the
  number of concurrent faulty or malicious agents that can be detected
  and identified. Precisely, the consensus network needs to be $2k+1$
  (resp. $k+1$) connected for $k$ malicious (resp. faulty) agents to
  be generically detectable and identifiable by every well behaving
  agent. Third, we quantify the effect of undetectable inputs on the
  final consensus value. Fourth, we design three algorithms to detect
  and identify misbehaving agents. The first and the second algorithm
  apply fault detection techniques, and affords complete detection and
  identification if global knowledge of the network is available to
  each agent, at a high computational cost. The third algorithm is
  designed to exploit the presence in the network of weakly
  interconnected subparts, and provides local detection and
  identification of misbehaving agents whose behavior deviates more
  than a threshold, which is quantified in terms of the
  interconnection structure.
\end{abstract}

\section{Introduction}
Distributed systems and networks have received much attention in the
last years because of their flexibility and computational
performance. One of the most frequent tasks to be accomplished by
autonomous agents is to agree upon some parameters. Agreement
variables represent quantities of interest such as the work load in a
network of parallel computers, the clock speed for wireless sensor
networks, the velocity, the rendezvous point, or the formation pattern
for a team of autonomous vehicles; e.g., see
\cite{WR-RWB-EMA:07,ROS-JAF-RMM:07,FB-JC-SM:09}.

Several algorithms achieving consensus have been proposed and studied
in the computer science community \cite{NAL:97}. In this work, we
consider linear consensus iterations, where, at each time instant,
each node updates its state as a weighted combination of its own value
and those received from its neighbors
\cite{AJ-JL-ASM:02,ROS-RMM:03c}. The choice of algorithm weights
influences the convergence speed toward the steady state value
\cite{LX-SB:04}.

Because of the lack of a centralized entity that monitors the activity
of the nodes of the network, distributed systems are prone to attacks
and components failure, and it is of increasing importance to
guarantee trustworthy computation even in the presence of misbehaving
parts \cite{PM-RM:02}. Misbehaving agents can interfere with the
nominal functions of the network in different ways. In this paper, we
consider two extreme cases: that the deviations from their nominal
behavior are due to genuine, random faults in the agents; or that
agents can instead craft messages with the purpose of disrupting the
network functions.  In the first scenario, faulty agents are unaware
of the structure and state of the network and ignore the presence of
other faults. In the second scenario, the worst-case assumption is
made that misbehaving agents have knowledge of the structure and state
of the network, and may collude with others to produce the biggest
damage. We refer to the first case as non-colluding, or faulty; to the
second case as malicious, or Byzantine.

Reaching unanimity in an unreliable system is an important problem,
well studied by computer scientists interested in distributed
computing. A first characterization of the resilience of distributed
systems to malicious attacks appears in \cite{LL-RSMP:82}, where the
authors consider the task of agreeing upon a binary message sent by a
``Byzantine general,'' when the communication graph is complete. In \cite{DD:82}
the resilience of a partially connected\footnote{The connectivity of a
  graph is the maximum number of disjoint paths between any two
  vertices of the graph. A graph is complete if it has connectivity
  $n-1$, where $n$ is the number of vertices in the graph.} network
seeking consensus is analyzed, and it is shown that the well-behaving
agents of a network can always agree upon a parameter if and only if
the number of malicious agents
\begin{enumerate} 
\item is less than $1 /2$ of the network connectivity, and 
\item is less than $1 /3$ of the number of processors. 
\end{enumerate} 
This result has to be regarded as a fundamental limitation of the
ability of a distributed consensus system to sustain arbitrary
malfunctioning: the presence of misbehaving Byzantine processors can
be tolerated only if their number satisfies the above threshold,
independently of whatever consensus protocol is adopted.

We consider linear consensus algorithms in which every agent,
including the misbehaving ones, are assumed to send the same
information to all their neighbors. This assumption appears to be
realistic for most control scenarios. In a sensing network for
instance, the data used in the consensus protocol consist of the
measurements taken directly by the agents, and (noiseless)
measurements regarding the same quantity coincide. Also, in a
broadcast network, the information is transmitted using broadcast
messages, so that the content of a message is the same for all the
receiving nodes. The problem of characterizing the resilience
properties of linear consensus strategies has been partially addressed
in recent works \cite{FP-AB-FB:06v,SS-CNH:08fault,SS-CNH:08bis},
where, for the malicious case, it is shown that, despite the limited
abilities of the misbehaving agents, the resilience to external
attacks is still limited by the connectivity of the network. In
\cite{FP-AB-FB:06v} the problem of detecting and identifying
misbehaving agents in a linear consensus network is first introduced,
and a solution is proposed for the single faulty agent case. In
\cite{SS-CNH:08fault,SS-CNH:08bis}, the authors provide one policy
that $k$ malicious agents can follow to prevent some of the nodes of a
$2k$-connected network from computing the desired function of the
initial state, or, equivalently, from reaching an agreement. On the
contrary, if the connectivity is $2k+1$ or more, then the authors show
that generically the set of misbehaving nodes is identified
independent of its behavior, so that the desired consensus is
eventually reached.

The main differences between this paper and the references
\cite{SS-CNH:08fault,SS-CNH:08bis} are as follows. First, the method
proposed in \cite{SS-CNH:08fault,SS-CNH:08bis} takes inspiration from
parity space methods for fault detection, while, following our early
work \cite{FP-AB-FB:06v}, we adopt here unknown-input observers
techniques \cite{RP-PF-RC:89}. Second, we focus on consensus networks,
and we derive specific results for this important case that cannot be
assessed for general linear iterations. Third, we consider two
different types of misbehaving agents, namely malicious and faulty
agents, and we provide network resilience bounds for both
cases. Fourth, we exhaustively characterize the complete set of
policies that make a set of $k$ agents undetectable and/or
unidentifiable, as opposed to \cite{SS-CNH:08fault} where only a
particular disrupting strategy is defined. Fifth, we study system
theoretic properties of consensus systems (e.g., detectability,
stabilizability, left-invertibility), and we quantify the effect of
some misbehaving inputs on the network performance. Finally, we
address the problem of detection complexity and we propose a
computationally efficient detection method, as opposed to
combinatorial procedures. Our approach also differs from the existing
computer science literature, e.g., our analysis leads to the
development of algorithms that can be easily extended to work on both
discrete and continuous time linear consensus networks, and also with
partial knowledge of the network topology.

The main contributions of this work are as follows. By recasting the
problem of linear consensus computation in an unreliable system into a
system theoretic framework, we provide alternative and constructive
system-theoretic proofs of existing bounds on the number of
identifiable misbehaving agents in a linear network, i.e., $k$
Byzantine agents can be detected and identified if the network is
$(2k+1)$-connected, and they cannot be identified if the network is
$2k$-connected or less. Moreover, by showing some connections between
linear consensus networks and linear dynamical systems, we
exhaustively describe the strategies that misbehaving nodes can follow
to disrupt a linear network that is not sufficiently connected. In
particular, we prove that the inputs that allow the misbehaving agents
to remain undetected or unidentified coincide with the inputs-zero of
a linear system associated with the consensus network.
We provide a novel and comprehensive analysis on the detection and
identification of non-colluding agents. We show that $k$ faulty agents
can be identified if the network is $(k+1)$-connected, and cannot if
the network is $k$-connected or less. For both the cases of Byzantine
and non-colluding agents, we prove that the proposed bounds are
generic with respect to the network communication weights, i.e., given
an (unweighted) consensus graph, the bounds hold for almost all
(consensus) choices of the communication weights. In other words, if
we are given a $(k+1)$-connected consensus network for which $k$
faulty agents cannot be identified, then a random and arbitrary small
change of the communication weights (within the space of consensus
weights) make the misbehaving agents identifiable with probability
one. In the last part of the paper, we discuss the problem of
detecting and identifying misbehaving agents when either the partial
knowledge of the network or hardware limitations make it impossible to
implement an exact identification procedure. We introduce a notion of
network decentralization in terms of relatively weakly connected
subnetworks. We derive a sufficient condition on the consensus matrix
that allows to identify a certain class of misbehaving agents under
local network model information. Finally, we describe a local algorithm
to promptly detect and identify corrupted components.

The rest of the paper is organized as follows. Section \ref{notation}
briefly recalls some basic facts on the geometric approach to the
study of linear systems, and on the fault detection and isolation
problem. In Section \ref{linear_iterations} we model linear consensus
networks with misbehaving agents. Section \ref{resilience_section}
presents the conditions under which the misbehaving agents are
detectable and identifiable. In Section
\ref{undetectble_attack_section} we characterize the effect of an
unidentifiable attack on the network consensus state. In Section
\ref{generic_section} we show that the resilience of linear consensus
networks to failures and external attacks is a generic property with
respect to the consensus weights. In Section \ref{approximate} we
present our algorithmic procedures. Precisely we derive an exact
identification algorithm, and an approximate and low-complexity
procedure. Finally, Sections \ref{examples} and \ref{conclusions}
contain respectively our numerical studies and our conclusion.

\section{Notation and preliminary concepts}\label{notation}
We adopt the same notation as in \cite{GB-GM:91}. Let $n,m,p \in
\natural$, let $A \in \real^{n \times n}$, $B \in \real^{n \times m}$,
and $C \in \real^{p \times n}$. Let the triple $(A,B,C)$ denote the
linear discrete time system
\begin{equation}\label{lds}\begin{split}
  x(t+1)&=Ax(t)+Bu(t),\\
  y(t)&=Cx(t),
\end{split}\end{equation}
and let the subspaces $\B \subseteq \real^{n \times n}$ and $\C
\subseteq \real^{n \times n}$ denote the image space $\Image(B)$ and
the null space $\Ker(C)$, respectively. A subspace
$\mathcal{V}\subseteq \real^{n \times n}$ is a $(A,\B)$-controlled
invariant if $A\mathcal{V}\subseteq \mathcal{V}+\mathcal{B}$, while a
subspace $\mathcal{S}\subseteq \real^{n \times n}$ is a
$(A,\C)$-conditioned invariant if $A(\mathcal{S}\cap \C)\subseteq
\mathcal{S}$.
The set of all controlled invariants contained in $\C$ admits a
supremum, which we denote with $\Vtar$, and which corresponds to the
locus of all possible state trajectories of \eqref{lds} invisible at
the output.
On the other hand, the set of the conditioned invariants containing
$\B$ admits an infimum, which we denote with $\Star$. Several
problems, including disturbance decoupling, non interacting control,
fault detection and isolation, and state estimation in the presence of
unknown inputs have been addressed and solved in a geometric framework
\cite{GB-GM:91,WMW:85}.

In the classical Fault Detection and Isolation (FDI) setup, the
presence of sensor failures and actuator malfunctions is modeled by
adding some unknown and unmeasurable functions $u_i (t)$ to the
nominal system. The FDI problem is to design, for each failure $i$, a
filter of the form
\begin{equation}\label{res_gen}\begin{split}
  w_i(t+1)&=F_iw_i(t)+E_iy(t),\\
  r_i(t)&=M_iw(t)+H_iy(t),
\end{split}\end{equation}
also known as residual generator, that takes the observables $y(t)$
and generates a residual vector $r_i(t)$ that allows to uniquely
identify if $u_i (t)$ becomes nonzero, i.e., if the failure $i$
occurred in the system. Let $B_1,\dots,B_m$ be the input matrices of
the failure functions $u_1,\dots,u_m$. As a result of
\cite{GB-GM:91,MAM-GCV-ASW-CM:89}, the $i$-th failure can be correctly
identified if and only if $\mathcal{B}_i\cap
(\mathcal{V}^{*}_{K\setminus \{i\}}+\mathcal{S}^{*}_{K\setminus
  \{i\}})=\emptyset$, where $\Vtar_{K\setminus \{i\}}$ and
$\Star_{K\setminus \{i\}}$ are the maximal controlled and minimal
conditioned invariant subspaces associated with the triple $(A,[B_{1}
\cdots B_{i-1} \enspace B_{i+1} \cdots B_{m}],C)$. It can be shown
that, under the above solvability condition, the filter
\eqref{res_gen} can be designed as a dead beat device to have finite
convergence time \cite{MAM-GCV-ASW-CM:89}: this property will be used
in Section \ref{approximate} for the characterization of our intrusion
detection algorithm. We remark that, although the FDI problem does not
coincide with the problem we are going to face, we will be using some
standard FDI techniques to design our detection and identification
algorithms, and we refer the reader to \cite{RP-PF-RC:89} for a
comprehensive treatment of the subject.

\section{Linear consensus in the presence of misbehaving
  agents}\label{linear_iterations}
Let $G$ denote a directed graph with vertex set $V=\until{n}$ and edge
set $E \subset V \times V$, and recall that the connectivity of $G$ is
the maximum number of disjoint paths between any two vertices of the
graph, or, equivalently, the minimum number of vertices in a vertex
cutset \cite{CDG-GFR:01}. The neighbor set of a node $i\in V$, i.e.,
all the nodes $j \in V$ such that the pair $(j,i)\in E$, is denoted
with $N_{i}$. We let each vertex $j\in V$ denote an autonomous agent,
and we associate a real number $x_j$ with each agent $j$. Let the
vector $x \in \real^{n}$ contain the values $x_j$. A linear iteration
over $G$ is an update rule for $x$ and is described by the linear
discrete time system
\begin{align}\label{linearsystem}
  x(t+1)=Ax(t),
\end{align}
where the $(i,j)$-th entry of $A$ is nonzero only if $(j,i) \in E$. If
the matrix $A$ is row stochastic and primitive, then, independent of
the initial values of the nodes, the network asymptotically converges
to a configuration in which the state of the agents coincides. In the
latter case, the matrix $A$ is referred to as a \emph{consensus
  matrix}, and the system \eqref{linearsystem} is called
\textit{consensus system}. The graph $G$ is referred to as the
communication graph associated with the consensus system
\eqref{linearsystem} or, equivalently, with the consensus matrix
$A$. A detailed treatment of the applications, and the convergence
aspects of the consensus algorithm is in
\cite{WR-RWB-EMA:07,ROS-JAF-RMM:07,FB-JC-SM:09}, and in the references
therein.

We allow for some agents to update their state differently than
specified by the matrix $A$ by adding an exogenous input to the
consensus system. Let $u_i (t)$, $i\in V$, be the input associated
with the $i$-th agent, and let $u (t)$ be the vector of the functions
$u_i (t)$. The consensus system becomes $x(t+1)=Ax(t)+u(t)$.
\begin{definition}[Misbehaving agent]
  An agent $j$ is \emph{misbehaving} if there exists a time $t\in
  \mathbb{N}$ such that $u_j(t)\neq 0$.
\end{definition}
In Section \ref{resilience_section} we will give a precise definition
of the distinction, made already in the Introduction, between {\em
  faulty} and {\em malicious} agents on the basis of their inputs.

Let $K=\{i_{1},i_{2},\dots\}\subseteq V$ denote a set of misbehaving
agents, and let $B_K=[e_{i_{1}} \enspace e_{i_{2}} \enspace \cdots ]$,
where $e_i$ is the $i$-th vector of the canonical basis. The consensus
system with misbehaving agents $K$ reads as
\begin{align}\label{consensus_intruders}
  x(t+1)=Ax(t)+B_Ku_K(t).
\end{align}
As it is shown in \cite{FP-AB-FB:06v}, algorithms of the form
(\ref{linearsystem}) have no resilience to malfunctions, and the
presence of a misbehaving agent may prevent the entire network from
reaching consensus. As an example, let $c\in \mathbb{R}$, and let $u_i
(t)=-A_ix(t)+c$, being $A_i$ the $i$-th row of $A$. After reordering
the variables in a way that the well-behaving nodes come first, the
consensus system can be rewritten as
\begin{align}\label{absorbing}
  \tilde{x}(t+1)=
  \begin{bmatrix}
      Q & R\\
      0 & 1
  \end{bmatrix}
  \tilde{x}(t),
\end{align}
where the matrix $Q$ corresponds to the interaction among the nodes
$V\setminus \{i\}$, while $R$ denotes the connection between the sets
$V\setminus \{i\}$ and $\{i\}$. Recall that a matrix is said to be Schur
stable if all its eigenvalues lie in the open unit disk.
\begin{lemma}[Quasi-stochastic submatrices]\label{quasi_stoc}
  Let $A$ be an $n\times n$ consensus matrix, and let $J$ be a proper
  subset of $\{1,\dots,n\}$. The submatrix with entries
  $A_{i,k}$, $i, k \in J$, is Schur stable.
\end{lemma}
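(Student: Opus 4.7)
The plan is to prove that $\rho(A_J) < 1$ by exploiting two properties of a consensus matrix: row-stochasticity (which forces $\rho(A_J) \le 1$) and primitivity, in particular irreducibility (which rules out equality). I would first normalize the notation, writing $A_J$ for the principal submatrix indexed by $J$, and let $\mathbf{1}$ denote the all-ones vector in $\real^{|J|}$.

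First I would observe that $A_J$ is entrywise nonnegative and that $A_J \mathbf{1} \le \mathbf{1}$ componentwise, because for each $i \in J$,
\begin{equation*}
  (A_J \mathbf{1})_i \;=\; \sum_{k \in J} A_{i,k} \;=\; 1 - \sum_{k \notin J} A_{i,k} \;\le\; 1.
\end{equation*}
This already gives $\|A_J\|_\infty \le 1$ and hence $\rho(A_J) \le 1$. Moreover, iterating the inequality $A_J \mathbf{1} \le \mathbf{1}$ and using nonnegativity of $A_J$ shows that the sequence $\{A_J^m \mathbf{1}\}_{m \ge 0}$ is monotonically non-increasing in every component.

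Next, the strict inequality $\rho(A_J) < 1$. Since $A$ is primitive, its communication graph $G$ is strongly connected. Because $J$ is a proper subset of $\{1,\dots,n\}$, for every $i \in J$ there exists in $G$ a path from $i$ to some vertex outside $J$; truncating at the first exit yields indices $i = v_0, v_1, \dots, v_{\ell-1} \in J$ and $v_\ell \notin J$ with $A_{v_0,v_1} A_{v_1,v_2} \cdots A_{v_{\ell-1},v_\ell} > 0$. This implies that starting from $i$ a strictly positive share of the row sum of $A^\ell$ lands outside $J$, so $(A_J^\ell \mathbf{1})_i < 1$. Letting $M$ be the maximum of such $\ell$ over all $i \in J$, the monotonicity observation above gives $A_J^M \mathbf{1} < \mathbf{1}$ strictly in every component, hence $\|A_J^M\|_\infty < 1$. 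Gelfand's formula (or simply $\rho(A_J) = \rho(A_J^M)^{1/M} \le \|A_J^M\|_\infty^{1/M}$) then yields $\rho(A_J) < 1$, which is Schur stability.

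The step I expect to be the main obstacle is the existence of a uniform power $M$ for which \emph{every} row sum of $A_J^M$ is strictly below $1$; one might at first only conclude this row-by-row, with a different exponent $\ell_i$ for each $i \in J$. The monotonicity of $\{A_J^m \mathbf{1}\}$ (which uses both $A_J \ge 0$ and $A_J \mathbf{1} \le \mathbf{1}$) is the key observation that lets me take a common $M = \max_i \ell_i$ and transfer the pointwise inequalities into a uniform $\|\cdot\|_\infty$ bound. An alternative route, worth mentioning as a sanity check, is to appeal directly to the Perron--Frobenius theorem: for an irreducible nonnegative matrix, every proper principal submatrix has spectral radius strictly less than that of the full matrix; this gives the conclusion immediately from $\rho(A) = 1$.
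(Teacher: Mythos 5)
Your proof is correct, but it takes a genuinely different route from the paper's. The paper embeds $A_J$ into an $n\times n$ matrix $\tilde{A}_J$ by zero-padding, observes $A \ge |\tilde{A}_J|$ and $A \neq |\tilde{A}_J|$, and invokes Wielandt's theorem: equality $\rho(\tilde{A}_J)=\rho(A)=1$ would force $A = D\tilde{A}_J D^{-1}$ for a nonsingular diagonal $D$, which is impossible because diagonal similarity preserves the zero pattern and $\tilde{A}_J$ is reducible while $A$ is not. Your argument is more elementary and self-contained: you bound $\rho(A_J) \le \|A_J^M\|_\infty^{1/M}$ and use strong connectivity (irreducibility alone suffices here; primitivity is not needed) to produce, for each $i\in J$, an escape path out of $J$ whose positive weight forces the $i$-th row sum of $A_J^{\ell_i}$ strictly below $1$; the componentwise monotonicity of $A_J^{m}\mathbf{1}$ then correctly upgrades the row-by-row exponents $\ell_i$ to a single $M=\max_i \ell_i$, which is the step you rightly identify as the crux. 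This costs a few more lines than the paper's citation of Wielandt, but it avoids any appeal to Perron--Frobenius machinery and yields a quantitative byproduct ($\|A_J^M\|_\infty<1$ for an explicit $M$ bounded by the escape-path lengths) that the paper's argument does not. Your closing alternative --- that a proper principal submatrix of an irreducible nonnegative matrix has strictly smaller spectral radius --- is essentially the paper's proof stated as a black box.
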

\begin{proof}
  Reorder the nodes such that the indexes in $J$ come first in the
  matrix $A$. Let $A_J$ be the leading principal submatrix of
  dimension $|J|$. Let
$\tilde{A}_J=\left[
    \begin{smallmatrix}
      A_J & 0\\
      0 & 0
    \end{smallmatrix}
    \right]$,
    where the zeros are such that $\tilde{A}_J$ is $n\times n$, and
    note that $\rho(A_J)=\rho(\tilde{A}_J)$, where $\rho(A_J)$ denotes
    the spectral radius of the matrix $A_J$ \cite{CDM:01}. Since $A$
    is a consensus matrix, it has only one eigenvalue of unitary
    modulus, and $\rho(A)=1$. Moreover, $A \ge |\tilde{A}_J|$, and $A
    \neq |\tilde{A}_J|$, where $|\tilde{A}_J|$ is such that its
    $(i,j)$-th entry equals the absolute value of the $(i,j)$-th entry
    of $\tilde{A}_J$, $\forall i,j$. It is known that $\rho(A_J)\le
    \rho(A)=1$, and that if equality holds, then there exists a
    diagonal matrix $D$ with nonzero diagonal entries, such that
    $A=D\tilde{A}_JD^{-1}$ \cite[Wielandt's Theorem]{CDM:01}. Because
    $A$ is irreducible, there exists no diagonal $D$ with nonzero
    diagonal entries such that $A=D\tilde{A}_JD^{-1}$ and the
    statement follows.
\end{proof}

Because of Lemma \ref{quasi_stoc}, the matrix $Q$ in \eqref{absorbing}
is Schur stable, so that the steady state value of the well-behaving
agents in \eqref{absorbing} depends upon the action of the misbehaving
node, and it corresponds to $(I-Q)^{-1}Rc$. In particular, since
$(I-Q)^{-1}R = [1 \, \cdots \, 1]^\transpose$, a single misbehaving agent can
steer the network towards any consensus value by choosing the constant
$c$.\footnote{If the misbehaving input is not constant, then the
  network may not achieve consensus. In particular, the effect of a
  misbehaving input $u_K$ on the network state at time $t$ is given by
  $\sum_{\tau =0}^{t} A^{t-\tau} B_K u_K (\tau)$ (see also Section
\ref{undetectble_attack_section}).}

It should be noticed that a different model for the misbehaving nodes
consists in the modification of the entries of $A$ corresponding to
their incoming communication edges. However, since the resulting
network evolution can be obtained by properly choosing the input $u_K
(t)$ and letting the matrix $A$ fixed, our model does not limit
generality, while being convenient for the analysis. For the same
reason, system \eqref{consensus_intruders} also models the case of
defective communication edges. Indeed, if the edge from the node $i$
to the node $j$ is defective, then the message received by the agent
$j$ at time $t$ is incorrect, and hence also the state $x_j(\bar t)$,
$\bar t\ge t$. Since the values $x_j(\bar t)$ can be produced with an
input $u_j (t)$, the failure of the edge $(i,j)$ can be regarded as
the $j$-th misbehaving action. Finally, the following key difference
between our model and the setup in \cite{DD:82} should be noticed. If
the communication graph is complete, then up to $n-1$ (instead of
$\lfloor n/3 \rfloor$) misbehaving agents can be identified in our
model by a well-behaving agent. Indeed, since with a complete
communication graph the initial state $x(0)$ is correctly received by
every node, the consensus value is computed after one communication
round, so that the misbehaving agents cannot influence the dynamics of
the network.


\section{Detection and identification of misbehaving
  agents}\label{resilience_section}
The problem of ensuring trustworthy computation among the agents of a
network can be divided into a detection phase, in which the presence
of misbehaving agents is revealed, and an identification phase, in
which the identity of the intruders is discovered. A set of
misbehaving agents may remain undetected from the observations of a
node $j$ if there exists a normal operating condition under which the
node would receive the same information as under the perturbation due
to the misbehavior. To be more precise, let $C_j=[e_{n_1} \enspace
\dots \enspace e_{n_p}]^\transpose$, $\{n_1,\dots,n_p\}= N_j$, denote the
output matrix associated with the agent $j$, and let $y_j (t) = C_j
x(t)$ denote the measurements vector of the $j$-th agent at time
$t$. Let $x(x_0,\bar u,t)$ denote the network state trajectory
generated from the initial state $x_0$ under the input sequence $\bar
u(t)$, and let $y_j(x_0,\bar u,t)$ be the sequence measured by the
$j$-th node and corresponding to the same initial condition and input.

\begin{definition}[Undetectable input]\label{undetectable_input}
  For a linear consensus system of the form
  \eqref{consensus_intruders}, the input $u_{K}(t)$ introduced by a
  set $K$ of misbehaving agents is \emph{undetectable} if
  \begin{align*}
    \exists \, x_1 , x_2 \in \real^n, j \in V : \forall t\in
    \mathbb{N}, y_j(x_1,u_K,t) = y_j(x_2,0,t).
  \end{align*}
\end{definition}

A more general concern than detection is identifiability of intruders,
i.e. the possibility to distinguish from measurements between the
misbehaviors of two distinct agents, or, more generally, between two
disjoint subsets of agents. Let $\mathcal{K} \subset 2^V$ contain all
possible sets of misbehaving agents.\footnote{An element of
  $\mathcal{K}$ is a subset of $\until{n}$. For instance,
  $\mathcal{K}$ may contain all the subsets of $\until{n}$ with a
  specific cardinality.}

\begin{definition}[Unidentifiable input]\label{unidentifiable_input}
  For a linear consensus system of the form
  \eqref{consensus_intruders} and a nonempty set $K_1 \in
  \mathcal{K}$, an input $u_{K_1}(t)$ is \emph{unidentifiable} if
  there exist $K_2 \in \mathcal{K}$, with $K_1 \neq K_2$, and an input
  $u_{K_2}(t)$ such that
  \begin{align*}
    \exists \, x_1,x_2 \!\in \real^n, j\in V \!:\!  \forall t\in
    \mathbb{N}, y_j(x_1,u_{K_1},t) = y_j(x_2,u_{K_2},t).
  \end{align*}
\end{definition}
Of course, an undetectable input is also unidentifiable, since it
cannot be distinguished from the zero input. The converse does not
hold. Unidentifiable inputs are a very specific class of inputs, to be
precisely characterized later in this section. Correspondingly, we
define

\begin{definition}[Malicious behaviors]\label{malicious_definition}
  A set of misbehaving agents $K$ is malicious if its input $u_K(t)$
  is unidentifiable. It is faulty otherwise.
\end{definition}

We provide now a characterization of malicious behaviors for the
particularly important class of linear consensus networks. 
Notice however that, if the matrix $A$ below is not restricted to be a
consensus matrix, then the following Theorem extends the results in
\cite{SS-CNH:08fault} by fully characterizing the inputs for which a
group of misbehaving agents remains unidentified from the output
observations of a certain node.

\begin{theorem}[Characterization of malicious
  behaviors]\label{behaviors}
  For a linear consensus system of the form
  \eqref{consensus_intruders} and a nonempty set $K_1 \in
  \mathcal{K}$, an input $u_{K_1} (t)$ is unidentifiable if and only
  if
  \begin{align*}
    C_j A^{t+1} \bar x = \sum_{\tau =0}^{t} C_j A^{t-\tau} \left (
      B_{K_1} u_{K_1} (\tau) - B_{K_2} u_{K_2} (\tau) \right),
  \end{align*}
  for all $t \in \mathbb{N}$, and for some $u_{K_2}(t)$, with $K_2 \in
  \mathcal{K}$, $K_1 \neq K_2$, and $\bar x \in \real^n$. If the
  same holds with $u_{K_2}(t) \equiv 0$, the input is actually
  undetectable.
\end{theorem}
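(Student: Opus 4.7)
The plan is to proceed by direct computation using the explicit closed-form solution of the linear difference equation \eqref{consensus_intruders}. For any initial condition $x_0$ and any input $u_K$, variation of parameters (equivalently, a one-line induction on $t$) yields
\begin{align*}
  y_j(x_0, u_K, t) = C_j A^t x_0 + \sum_{\tau = 0}^{t-1} C_j A^{t-1-\tau} B_K u_K(\tau).
\end{align*}
This reduces the theorem to an algebraic statement about when two such output trajectories coincide, so the whole argument is essentially a bookkeeping exercise on the discrete-time convolution formula.

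For the necessity direction, I would assume that $u_{K_1}$ is unidentifiable in the sense of Definition \ref{unidentifiable_input}. Then there exist $x_1, x_2 \in \real^n$, a set $K_2 \in \mathcal{K}$ with $K_2 \neq K_1$, an input $u_{K_2}$, and a node $j$ such that $y_j(x_1, u_{K_1}, t) = y_j(x_2, u_{K_2}, t)$ for every $t \in \mathbb{N}$. Substituting the closed-form expression above, cancelling identical terms, and rearranging yields
\begin{align*}
  C_j A^t (x_2 - x_1) = \sum_{\tau=0}^{t-1} C_j A^{t-1-\tau} \bigl( B_{K_1} u_{K_1}(\tau) - B_{K_2} u_{K_2}(\tau) \bigr).
\end{align*}
Setting $\bar x = x_2 - x_1$ and shifting the time index by one ($t \mapsto t+1$) reproduces the identity in the theorem statement.

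For sufficiency, suppose the identity holds for some $\bar x$, $K_2$, and $u_{K_2}$. I would simply pick any $x_1 \in \real^n$ and set $x_2 = x_1 + \bar x$, and then reverse the algebraic manipulation above to conclude that $y_j(x_1, u_{K_1}, t) = y_j(x_2, u_{K_2}, t)$ for every $t$, which matches Definition \ref{unidentifiable_input}. The undetectable case is recovered by specializing to $u_{K_2} \equiv 0$, which corresponds to comparison with the nominal input-free consensus evolution required by Definition \ref{undetectable_input}.

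No genuine obstacle arises: the argument is purely mechanical once the closed-form solution is in hand. The only point requiring a bit of care is the discrete-time index convention in the convolution sum — making sure that the one-step input-to-output delay shows up correctly so that the $A^{t+1}$ and $A^{t-\tau}$ exponents in the statement match after the reindexing. I would also briefly check that allowing $\bar x$ to range over all of $\real^n$ is consistent with the freedom to choose $x_1$ and $x_2$ independently in the definition, which it clearly is.
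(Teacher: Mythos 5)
Your proposal is correct and follows essentially the same route as the paper: the paper's proof simply invokes the definitions of undetectable/unidentifiable inputs and "linearity of the network," and your variation-of-parameters computation is exactly the bookkeeping that this linearity appeal compresses. The reindexing $t \mapsto t+1$ with $\bar x = x_2 - x_1$ matches the exponents in the stated identity, so nothing is missing relative to the paper's argument.
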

\begin{proof}
  By definitions \ref{undetectable_input} and
  \ref{unidentifiable_input}, an input $u_{K_1} (t)$ is unidentifiable
  if $y_j (x_1, u_{K_{1}},t) = y_j (x_2, u_{K_{2}},t)$, and it is
  undetectable if $y_j (x_1, u_{K_{1}},t) = y_j (x_2,0,t)$, for some
  $x_1$, $x_2$, and $u_{K_{2}} (t)$. Due to linearity of the network,
  the statement follows.
\end{proof}


\begin{remark}[Malicious behaviors are not generic]
  Because an unidentifiable input must satisfy the equation in
  Theorem~\ref{behaviors}, excluding pathological cases,
  unidentifiable signals are not generic, and they can be injected
  only intentionally by colluding misbehaving agents. This motivates
  our definition of ``malicious'' for those agents which use
  unidentifiable inputs.
  \oprocend
\end{remark}




We consider now the resilience of a consensus network to faulty and
malicious misbehaviors. Let $I$ denote the identity matrix of
appropriate dimensions. The zero dynamics of the linear system
$(A,B_K,C_j)$ are the (nontrivial) state trajectories invisible at the
output, and can be characterized by means of the $(n+p) \times (n+ |K|)$
pencil
\begin{align*}
  P(z)=
  \begin{bmatrix}
    zI-A & B_K\\
    C_j & 0
  \end{bmatrix}.
\end{align*}
The complex value $\bar z$ is said to be an invariant zero of the
system $(A,B_K,C_j)$ if there exists a state-zero direction $x_0$,
$x_0 \neq 0$, and an input-zero direction $g$, such that $(\bar z
I-A)x_0+B_K g=0$, and $C_jx_0=0$. Also, if $\textup{rank}(P(z))=n+|K|$
for all but finitely many complex values $z$, then the system
$(A,B_K,C_j)$ is left-invertible, i.e., starting from any initial
condition, there are no two distinct inputs that give rise to the same
output sequence \cite{HLT-AS-MH:01}. We next characterize the
relationship between the zero dynamics of a consensus system and the
connectivity of the consensus graph.

\begin{lemma}[Zero dynamics and connectivity]\label{left_connectivity}
  Given a $k$-connected linear network with matrix $A$, there exists a
  set of agents $K_1$, with $|K_1|>k$, and a node $j$ such that the
  consensus system $(A,B_{K_1},C_j)$ is not
  left-invertible. Furthermore, there exists a set of agents $K_2$,
  with $|K_2| = k$, and a node $j$ such that the system
  $(A,B_{K_2},C_j)$ has nontrivial zero dynamics.
\end{lemma}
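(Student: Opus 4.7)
The plan is to base both statements on the graph-theoretic fact that a $k$-connected graph admits a vertex cut of size exactly $k$. By Menger's theorem I fix such a cut $K$ together with two nonempty subsets that it separates. Let $W$ be one of the components of $G\setminus K$ and let $j\in V\setminus(W\cup K)$ be an observing node; then $N_j\cap W=\emptyset$ because no edge of $G$ joins $W$ to $V\setminus(W\cup K)$. The key structural fact I exploit is that $A_{v,u}=0$ whenever $v\in V\setminus(W\cup K)$ and $u\in W$, so any state supported on $W$ cannot leak outside $W\cup K$ in one step of the dynamics.

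For the second statement I set $K_2=K$ and pick any nonzero $x_0$ supported on $W$. I define the input recursively by choosing $u_{K_2}(t)$ so as to enforce $x_{K_2}(t+1)=0$, explicitly $u_{K_2}(t)=-(Ax(t))_{K_2}$. A straightforward induction using the vanishing block above shows that $x(t)$ remains supported on $W\cup K_2$ with $x_{K_2}(t)=0$ for every $t\ge 1$; in particular $C_j x(t)=0$ for all $t$. Since $x_0\neq 0$ the trajectory is nontrivial, so $(A,B_{K_2},C_j)$ has nontrivial zero dynamics.

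For the first statement I enlarge $K$ by one node $i\in W$, setting $K_1=K\cup\{i\}$ so that $|K_1|=k+1>k$. Since left-invertibility amounts to injectivity of the input-to-output map from the zero initial state, it suffices to exhibit a nonzero input that yields zero output starting at $x(0)=0$. I apply $u_i(0)=1$ and $u_K(0)=0$, giving $x(1)=e_i$ supported on $W$. For $t\ge 1$ I set $u_i(t)=0$ and $u_K(t)=-(Ax(t))_{K}$. The same induction shows that $x(t)$ stays supported on $W\cup K$ with $x_K(t)=0$ for all $t\ge 1$, so $C_j x(t)=0$ throughout, while the input sequence is nonzero already at $t=0$. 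This certifies that $(A,B_{K_1},C_j)$ is not left-invertible.

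The main obstacle I anticipate is reconciling the graph-theoretic notion of a vertex cut with the directedness of the consensus graph. The construction hinges on the vanishing block $A_{V\setminus(W\cup K),W}=0$: under the undirected reading this follows at once from $W$ being a component of $G\setminus K$, but for a directed graph the correct choice is to take $W$ as the reachable set of some node after $K$ is removed, so that every out-edge from $W$ lands in $W\cup K$. Confirming that the paper's notion of $k$-connectedness supplies such a $K$ together with a usable $W$ and a separated observer $j$ is the one delicate point; once the structural vanishing is in hand, the state-feedback cancellation construction is routine.
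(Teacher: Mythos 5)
Your construction is correct and is essentially the paper's own proof: take a vertex cut $K$ of size $k$, hide a trajectory in the separated component $W$ by using the cut agents' inputs to cancel everything flowing into $K$ (so the observer $j$ on the other side sees nothing), and for the non-left-invertibility claim add one driving node inside $W$ to $K$. Your extra care about directedness (choosing $W$ as a reachable set so that the block $A_{V\setminus(W\cup K),\,W}$ vanishes) is a small refinement of the same argument, which the paper handles implicitly by asserting the block-triangular form of $A$ induced by the cut.
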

\begin{proof}
  Let $G$ be the digraph associated with $A$, and let $k$ be the
  connectivity of $G$. Take a set $K$ of $k+1$ misbehaving nodes, such
  that $k$ of them form a vertex cut $S$ of $G$. Note that, since the
  connectivity of $G$ is $k$, such a set always exists. The network
  $G$ is divided into two subnetworks $G_1$ and $G_3$, which
  communicate only through the nodes $S$. Assume that the misbehaving
  agent $K\setminus S$ belongs to $G_3$, while the observing node $j$
  belongs to $G_1$. After reordering the nodes such that the vertices
  of $G_1$ come first, the vertices $S$ come second, and the vertices
  of $G_3$ come third, the consensus matrix $A$ is of the form
$\left[
    \begin{smallmatrix}
      A_{11} & A_{12} & 0\\
      A_{21} & A_{22} & A_{23}\\
      0 & A_{32} & A_{33}
    \end{smallmatrix}
  \right]$,
  where the zero matrices are due to the fact that $S$ is a vertex
  cut. Let $u_S (t)=-A_{23}x_3 (t)$, where $x_3$ is the vector
  containing the values of the nodes of $G_3$, and let $u_{K\setminus
    S} (t)$ be any arbitrary nonzero function. Clearly, starting from
  the zero state, the values of the nodes of $G_1$ are constantly $0$,
  while the subnetwork $G_3$ is driven by the misbehaving agent
  $K\setminus S$. We conclude that the triple $(A,B_K,C_j)$ is not
  left-invertible.

  Suppose now that $K\equiv S$ as previously defined, and let $u_K
  (t)=-A_{23}x_3 (t)$. Let the initial condition of the nodes of $G_1$
  and of $S$ be zero. Since every state trajectory generated by
  $x_3\neq 0$ does not appear in the output of the agent $j$, the
  triple $(A,B_K,C_j)$ has nontrivial zero dynamics.
\end{proof}


Following Lemma \ref{left_connectivity}, we next
state an upper bound on the number of misbehaving agents that can be
detected.

\begin{theorem}[Detection bound]\label{impossibility}
  Given a $k$-connected linear consensus network, there exist
  undetectable inputs for a specific set of $k$ misbehaving agents.
\end{theorem}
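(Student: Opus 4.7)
The plan is to derive Theorem~\ref{impossibility} as an almost immediate consequence of the second statement of Lemma~\ref{left_connectivity}, which has already performed the essential graph-theoretic construction. That lemma provides, for a $k$-connected consensus matrix $A$, a set $K$ of exactly $k$ misbehaving agents (namely, a vertex cut $S$ separating the observing node $j$ from a nontrivial component $G_3$) such that the triple $(A,B_K,C_j)$ admits nontrivial zero dynamics.

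Concretely, I would unfold what ``nontrivial zero dynamics'' yields: there exist a nonzero initial state $x_0\in\real^n$ and an input sequence $u_K(t)$ such that the corresponding forced state trajectory $x(t)=A^{t}x_0+\sum_{\tau=0}^{t-1}A^{t-1-\tau}B_Ku_K(\tau)$ satisfies $C_jx(t)=0$ for every $t\in\mathbb{N}$. In the specific construction of Lemma~\ref{left_connectivity}, one takes $x_0$ with zero entries on $G_1\cup S$ and arbitrary nonzero entries on $G_3$, and chooses $u_K(t)=-A_{23}x_3(t)$ so that the values on $G_1\cup S$ remain identically zero; the observing node $j\in G_1$ therefore sees the zero sequence throughout.

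To close the argument, I would appeal to Definition~\ref{undetectable_input}: setting $x_1:=x_0$ and $x_2:=0$ gives $y_j(x_1,u_K,t)=C_jx(t)=0=y_j(x_2,0,t)$ for all $t\in\mathbb{N}$, which is exactly the undetectability condition. Equivalently, using the characterization in Theorem~\ref{behaviors} with $u_{K_2}\equiv 0$ and $\bar{x}=-x_0$, the required identity
\begin{equation*}
C_jA^{t+1}\bar{x}=\sum_{\tau=0}^{t}C_jA^{t-\tau}B_Ku_K(\tau)
\end{equation*}
reduces to $C_jx(t+1)=0$, which holds by construction. Since $|K|=k$, this exhibits the claimed $k$-agent undetectable input.

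There is no genuine obstacle here beyond recognizing that the heavy lifting (finding $k$ agents producing an invisible trajectory via the vertex-cut argument) is already done in Lemma~\ref{left_connectivity}; the proof is essentially a translation from the language of zero dynamics to the language of Definition~\ref{undetectable_input}. The only subtle point worth explicitly noting is that the vertex cut must indeed have size exactly $k$ (guaranteed by the $k$-connectedness hypothesis together with Menger-type reasoning), so that the undetectability conclusion is sharp for the stated number of misbehaving agents.
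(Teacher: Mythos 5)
Your proposal is correct and follows essentially the same route as the paper: the paper likewise invokes the second part of Lemma~\ref{left_connectivity} to obtain a size-$k$ vertex cut $K$ and a node $j$ for which $(A,B_K,C_j)$ has nontrivial zero dynamics, and then concludes undetectability because the resulting output at $j$ is identically zero. Your version merely spells out the translation into Definition~\ref{undetectable_input} (with $x_1=x_0$, $x_2=0$) more explicitly than the paper does.
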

\begin{proof}
  Let $K$ , with $|K| = k$, be the misbehaving set, and let $K$ form a
  vertex cut of the consensus network. Because of Lemma
  \ref{left_connectivity}, for some output matrix $C_j$, the consensus
  system has nontrivial zero dynamics, i.e., there exists an initial
  condition $x(0)$ and an input $u_K (t)$ such that $y_j(t) = 0$ at
  all times. Hence, the input $u_K (t)$ is undetectable from the
  observations of $j$.
\end{proof}


We now consider the identification problem.

\begin{theorem}[Identification of misbehaving
  agents]\label{identification}
  For a set of misbehaving agents $K_1 \in \mathcal{K}$, every input
  is identifiable from $j$ if and only if the consensus system
  $(A,[B_{K_1}\enspace B_{K_2}],C_j)$ has no zero dynamics
  for every $K_2 \in \mathcal{K}$.
\end{theorem}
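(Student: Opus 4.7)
The plan is to convert the identifiability question into a zero-dynamics question on the augmented system $(A,[B_{K_1}\enspace B_{K_2}],C_j)$. The bridge is linearity: if two state-input pairs yield the same output at node $j$, the difference of their state trajectories is itself a state trajectory of the augmented system, driven by the stacked input built out of $u_{K_1}$ and $-u_{K_2}$, that is annihilated by $C_j$. In fact, Theorem~\ref{behaviors} spells out this correspondence coefficient by coefficient in $t$, which is what I would leverage throughout.

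For sufficiency ($\Leftarrow$), I would assume the augmented system has no nontrivial zero dynamics for any $K_2 \in \mathcal{K}$ and suppose, toward a contradiction, that some $u_{K_1}$ is unidentifiable. Definition~\ref{unidentifiable_input} then furnishes $K_2 \neq K_1$, an input $u_{K_2}$, and initial conditions $x_1,x_2$ yielding equal outputs at $j$. Setting $z(t) := x(x_1,u_{K_1},t) - x(x_2,u_{K_2},t)$, linearity makes $z(\cdot)$ a trajectory of $(A,[B_{K_1}\enspace B_{K_2}],C_j)$ driven by the stacked input $(u_{K_1},-u_{K_2})$ with $C_j z(t)=0$ for every $t$. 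The no-zero-dynamics hypothesis collapses $z$ to the trivial trajectory, leaving only the degenerate case to dispose of.

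For necessity ($\Rightarrow$), I would assume identifiability of every input and suppose the augmented system has nontrivial zero dynamics for some $K_2$, witnessed by a nonzero trajectory $z(\cdot)$ and stacked driving input $(v_1,v_2)$. Taking $x_1(0) := z(0)$ with $u_{K_1} := v_1$, and $x_2(0) := 0$ with $u_{K_2} := -v_2$, linearity produces $x_1(t)-x_2(t) = z(t)$ and hence matched outputs at $j$; this exhibits $v_1$ as an unidentifiable input for $K_1$, contradicting the assumption.

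The main obstacle I anticipate is the degenerate case $z\equiv 0$ with $(u_{K_1},u_{K_2})\not\equiv 0$ in the sufficiency argument. This would force $B_{K_1}u_{K_1}(t)\equiv B_{K_2}u_{K_2}(t)$, and since $B_{K_1}$ and $B_{K_2}$ are built from distinct canonical-basis columns indexed by $K_1$ and $K_2$, such cancellation can only happen on the shared support $K_1\cap K_2$ while forcing both inputs to vanish on the nonshared indices. I expect to rule this out cleanly using the convention $K_1\neq K_2$ in Definition~\ref{unidentifiable_input} together with the column structure of $B_K$; beyond this point, the remaining steps are straightforward linear-systems bookkeeping.
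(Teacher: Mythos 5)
Your argument is correct and follows essentially the same route as the paper's proof: both directions translate unidentifiability into the existence of an output-zeroing trajectory of the augmented system $(A,[B_{K_1}\enspace B_{K_2}],C_j)$ via linearity, exactly as packaged in Theorem~\ref{behaviors}. The one point where you go beyond the paper---the degenerate case $z\equiv 0$ with $B_{K_1}u_{K_1}\equiv B_{K_2}u_{K_2}\not\equiv 0$---is not in fact eliminated by the convention $K_1\neq K_2$ once $K_1\cap K_2\neq\emptyset$ (an input supported on the shared nodes is unidentifiable in the sense of Definition~\ref{unidentifiable_input} yet produces only the trivial state trajectory); the paper's proof silently ignores this edge case, which is harmless when the sets in $\mathcal{K}$ are pairwise disjoint but otherwise requires reading ``no zero dynamics'' broadly enough to include left-invertibility of the augmented system.
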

\begin{proof}
  \textit{(Only if)} By contradiction, let $x_0$ and
  $[u_{K_1}^\transpose \enspace -u_{K_2}^\transpose]^\transpose$ be a
  state-zero direction, and an input-zero sequence for the system
  $(A,[B_{K_1} \enspace B_{K_2}],C_j)$. We have
  \begin{align*}
    y_j(t)  = 0
    = C_j \!\bigg( \! A^tx_0 + \!\sum_{\tau =0}^{t-1}A^{t-\tau-1}
    \big( B_{K_1}u_{K_1} (\tau) - B_{K_2}u_{K_2} (\tau) \big) \!\bigg) .
  \end{align*}
    Therefore,
    \begin{align*}
      C_j\bigg(A^tx_0^1 +\sum_{\tau
        =0}^{t-1}A^{t-\tau-1}B_{K_1}u_{K_1} (\tau)\bigg)=C_j\bigg(A^tx_0^2
      +\sum_{\tau =0}^{t-1}A^{t-\tau-1}B_{K_2}u_{K_2} (\tau)\bigg),
    \end{align*}
    where $x_0^1-x_0^2=x_0$. Clearly, since the output sequence
    generated by $K_1$ coincide with the output sequence generated by
    $K_2$, the two inputs are unidentifiable.

    \textit{(If)} Suppose that, for any $K_2 \in \mathcal{K}$, the
    system $(A[B_{K_1} \; B_{K_2}])$ has no zero dynamics, i.e., there
    exists no initial condition $x_0$ and input $[u_{K_1}^\transpose
    \, u_{K_2}^\transpose]^\transpose$ that result in the output being
    zero at all times. By the linearity of the network, every input
    $u_{K_1}$ is identifiable.
\end{proof}

As a consequence of Theorem \ref{identification}, if up to $k$
misbehaving agents are allowed to act in the network, then a necessary
and sufficient condition to correctly identify the set of misbehaving
nodes is that the consensus system subject to any set of $2k$ inputs
has no nontrivial zero dynamics.
\begin{theorem}[Identification bound]\label{upper_malicious}
  Given a $k$-connected linear consensus network, there exist
  unidentifiable inputs for a specific set of $\lfloor
  \frac{k-1}{2}\rfloor + 1$ misbehaving agents.
\end{theorem}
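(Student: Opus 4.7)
The plan is to invoke Lemma~\ref{left_connectivity} to exhibit a set of $k$ agents whose attack is invisible at some observer, then partition this set into two disjoint parts and read off unidentifiability via Theorem~\ref{identification}. Concretely, since the network is $k$-connected, the second assertion of Lemma~\ref{left_connectivity} yields a subset $S\subseteq V$ with $|S|=k$ and an observing node $j$ such that the triple $(A,B_S,C_j)$ admits a nonzero state-zero direction $x_0$ together with an input-zero sequence $u_S(\cdot)$ producing the identically zero output at $j$.

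I would then partition $S = K_1 \sqcup K_2$ with $|K_1| = \lfloor (k-1)/2\rfloor + 1$ and $|K_2| = k - |K_1|$. A short case split on the parity of $k$ verifies $|K_2|\le |K_1|$: equality holds when $k$ is even, and $|K_2|=|K_1|-1$ when $k$ is odd. In particular both sets belong to any admissible class $\mathcal{K}$ containing the misbehaving sets of cardinality at most $\lfloor (k-1)/2\rfloor + 1$, and $K_1\neq K_2$ since the partition consists of disjoint nonempty pieces.

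After an appropriate column permutation, $B_S$ coincides with $[B_{K_1}\; B_{K_2}]$, so the triple $(A,[B_{K_1}\; B_{K_2}],C_j)$ inherits the nontrivial zero dynamics. Splitting the input-zero sequence conformally as $[u_{K_1}^\transpose\; v^\transpose]^\transpose$ and setting $u_{K_2} := -v$, the contrapositive of Theorem~\ref{identification} (together with the linearity computation in its proof) produces two initial conditions whose output trajectories at $j$ coincide for all $t$ when driven by $u_{K_1}$ from $K_1$ and by $u_{K_2}$ from $K_2$. This is exactly Definition~\ref{unidentifiable_input}, so $u_{K_1}$ is unidentifiable and the theorem follows.

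There is no genuine technical obstacle: the statement is essentially a direct reduction, combining the vertex-cut construction of Lemma~\ref{left_connectivity} with the zero-dynamics characterization of identifiability. The only bookkeeping step is the arithmetic check that $|K_2|\le|K_1|$, which is what guarantees that $K_2$ qualifies as an admissible alternative explanation of the observed data and therefore certifies $K_1$ as unidentifiable.
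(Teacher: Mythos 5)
Your argument is correct and follows essentially the same route as the paper: both reduce the claim to Lemma~\ref{left_connectivity} together with the ``only if'' direction of Theorem~\ref{identification}, the only difference being that the paper applies the lemma to the union of two disjoint sets each of size $\lfloor\frac{k-1}{2}\rfloor+1$ (cardinality $\ge k$), whereas you apply it to a single cut set of size exactly $k$ and then partition it. The one point worth noting is the degenerate case $k=1$, where your $K_2$ is empty and the construction yields an undetectable (hence, by the paper's convention, unidentifiable) input, while the paper's choice of two disjoint singletons sidesteps the empty set.
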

\begin{proof}
  Since $2(\lfloor \frac{k-1}{2}\rfloor+1)\ge k$, by Lemma
  \ref{left_connectivity} there exist $K_1$, $K_2$, with $|K_1| =
  |K_2| = \lfloor \frac{k-1}{2}\rfloor+1$, and $j$ such that the
  system $(A,[B_{K_1} \enspace B_{K_2}],C_j)$ has nontrivial zero
  dynamics. By Theorem \ref{identification}, there exists an input and
  an initial condition such that $K_1$ is undistinguishable from $K_2$
  to the agent $j$.
\end{proof}


In other words, in a $k$-connected network, at most $k-1$
(resp. $\lfloor \frac{k-1}{2}\rfloor$) misbehaving agents can be
certainly detected (resp. identified) by every agent. Notice that, for
a linear consensus network, Theorem \ref{upper_malicious} provides an
alternative proof of the resilience bound presented in \cite{DD:82}
and in \cite{SS-CNH:08fault}.


We now focus on the faulty misbehavior case. Notice that, because such
agents inject only identifiable inputs by definition, we only need to
guarantee the existence of such inputs. We start by showing that,
independent of the cardinality of a set $K$, there exist detectable
inputs for a consensus system $(A,B_K,C_j)$, so that any set of faulty
agents is detectable. By using a result from \cite{JT:06}, an input
$u_K (t)$ is undetectable from the measurements of the $j$-th agent
only if for all $t \in \mathbb{N}$, it holds $C_j A^v B_K u_K (t) =
C_j A^{v+1} x(t)$, where $C_j A^v B_K$ is the first nonzero Markov
parameter, and $x(t)$ is the network state at time $t$. Notice that,
because of the irreducibility assumption of a consensus matrix,
independently of the cardinality of the faulty set and of the
observing node $j$, there exists a finite $v$ such that $C_j A^v B_K
\neq 0$, so that every input $u_K (t) \neq (C_j A^v B_K )^\dag C_j
A^{v+1} x(t)$ is detectable. We show that, if the number of
misbehaving components is allowed to equal the connectivity of the
consensus network, then there exists a set of misbehaving agents that
are unidentifiable independent of their input.

\begin{theorem}[Identification of faulty agents]\label{upper}
  Given a $k$-connected linear consensus network, there exists no
  identifiable input for a specific set of $k$ misbehaving agents
\end{theorem}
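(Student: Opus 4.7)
The plan is to exhibit a specific $K_1$ of cardinality $k$ for which every input is unidentifiable, adapting the vertex-cut constructions already used in the proofs of Lemma~\ref{left_connectivity} and Theorem~\ref{upper_malicious}. Since the network is $k$-connected, there exists a set $S$ of $k$ vertices whose removal separates some observer node $j$, residing in a component $G_1$, from a second component $G_3$. I would take $K_1 = S$ and, after reordering the nodes as $G_1, S, G_3$, exploit the block-triangular form
\[
A = \begin{bmatrix} A_{11} & A_{12} & 0 \\ A_{21} & A_{22} & A_{23} \\ 0 & A_{32} & A_{33} \end{bmatrix}.
\]

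Given an arbitrary input $u_{K_1}(t)$ at $K_1 = S$, I would produce a witness to unidentifiability by choosing $K_2 = G_3$ (which is distinct from $K_1$ since $S \cap G_3 = \emptyset$) together with an input $u_{K_2}(t)$ and initial conditions $x_1, x_2$ agreeing on the $G_1$ and $S$ coordinates. The key intuition is that any effect of an exogenous input at $S$ on the measurements at $j$ must be channeled through the coupling between $S$ and its $G_1$-neighbors, and that the same effect can equivalently be produced by modifying the trajectory of $x_{G_3}$, which feeds into $S$ via $A_{23}$. I would therefore seek the state difference $\delta(t) = x_1(t) - x_2(t)$ to have its $G_1$ and $S$ coordinates identically zero, hence to lie in $\Ker(C_j)$ at all times, which reduces the matching requirement to the single equation
\[
A_{23}\,\delta_{G_3}(t) \;=\; u_{K_1}(t)
\]
at each time $t$. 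Once $\delta_{G_3}(t)$ is chosen to solve this equation, the input $u_{K_2}(t)$ that realizes it in the second scenario is read off the evolution $x_{G_3}(t+1) = A_{32}x_S(t) + A_{33}x_{G_3}(t) + u_{K_2}(t)$, which is fully actuated because $K_2 = G_3$.

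The principal obstacle is solvability of the matching equation for \emph{every} $u_{K_1}(t)$, which amounts to $A_{23}$ having full row rank $k$. I would derive this from a Menger-type argument: a $k$-vertex cut in a $k$-connected graph is saturated by $k$ internally vertex-disjoint paths between $G_3$ and $G_1$, and these paths induce a $k \times k$ submatrix of $A_{23}$ whose nonzero pattern admits a full transversal, so the submatrix is nonsingular (at least generically with respect to the consensus weights, as is made precise in Section~\ref{generic_section}). With this in hand, the tuple $(\delta(0),[u_{K_1}^\transpose,-u_{K_2}^\transpose]^\transpose)$ constitutes a zero-dynamics state-input pair for $(A,[B_{K_1}\;B_{K_2}],C_j)$, and Theorem~\ref{identification} then delivers the unidentifiability of every input at $K_1$.
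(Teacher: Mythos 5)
Your construction reverses the roles that the paper's proof assigns to the cut and to the far component, and this reversal is where the argument breaks. You place the misbehaving set at the cut, $K_1=S$, and ask the fully actuated far component to impersonate it; the matching condition you derive, $A_{23}\,\delta_{G_3}(t)=u_{K_1}(t)$ (up to sign) for \emph{every} $u_{K_1}(t)\in\real^{k}$, forces $A_{23}$ to have full row rank $k$. As you yourself concede, the Menger/transversal argument only gives a nonzero-pattern with a full transversal, hence nonsingularity of the relevant $k\times k$ submatrix \emph{generically} in the weights; a matrix with a full transversal can perfectly well be singular (e.g., two cut vertices fed by the same two $G_3$-neighbors with equal weights). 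Theorem~\ref{upper} is not a generic statement --- it must hold for the given consensus matrix, and indeed its role is to complement the generic results of Section~\ref{generic_section} --- so ``at least generically'' is exactly the concession you cannot afford. For a specific $k$-connected network with $\operatorname{rank}(A_{23})<k$, your chosen set admits inputs that cannot be matched while keeping the $G_1$ and $S$ coordinates of $\delta$ identically zero, and your proof says nothing about them.

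The paper's proof avoids any rank condition by exchanging the two roles: the misbehaving set is taken \emph{behind} the cut (inside the component not containing $j$), and the impostor is the cut itself. Because each cut node is fully actuated (the input enters those coordinates through an identity block of $B$), the cut can forge \emph{any} trajectory of its own states, in particular the one that the true misbehaving input would have induced; and since everything the observer $j$ sees is determined by the initial condition on its own component together with the trajectory of the cut states, the two output sequences coincide unconditionally. If you want to salvage your write-up, the fix is simply to flip which set is declared misbehaving and which one does the mimicking; the block-triangular decomposition you set up then carries through without the full-row-rank hypothesis.
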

\begin{proof}
    Let $K_1$, with $|K_1| = k$, form a vertex cut. The network is
  divided into two subnetworks $G_1$ and $G_2$ by the agents
  $K_1$. Let $K_2$, with $|K_2| \le k$, be the set of faulty agents,
  and suppose that the set $K_2$ belongs to the subnetwork $G_2$. Let
  $j$ be an agent of $G_1$. Notice that, because $K_1$ forms a vertex
  cut, for every initial condition $x(0)$ and for every input $u_{K_2}
  (t)$, there exists an input $u_{K_1} (t)$ such that the output
  sequences at the node $j$ coincide. In other words, every input
  $u_{K_2} (t)$ is unidentifiable.
\end{proof}


Hence, in a $k$-connected network, a set of $k$ faulty agents may
remain unidentified independent of its input function.
It should be noticed that Theorems \ref{upper_malicious} and
\ref{upper} only give an upper bound on the maximum number of
concurrent misbehaving agents that can be detected and identified. In
Section~\ref{generic_section} it will be shown that, generically, in a
$k$-connected network, there exists only identifiable inputs for any
set of $\lfloor \frac{k-1}{2} \rfloor$ misbehaving agents, and that
there exist some identifiable inputs for any set of $k-1$ misbehaving
agents. In other words, if there exists a set of misbehaving nodes
that cannot be identified by an agent, then, provided that the
connectivity of the communication graph is sufficiently high, a random
and arbitrarily small change of the consensus matrix makes the
misbehaving nodes detectable and identifiable with probability one.

\section{Effects of unidentified misbehaving
  agents}\label{undetectble_attack_section}
In the previous section, the importance of zero dynamics in the
misbehavior detection and identification problem has been shown. In
particular, we proved that a misbehaving agent may alter the nominal
network behavior while remaining undetected by injecting an input-zero
associated with the current network state. We now study the effect of
an unidentifiable attack on the final consensus value. As a
preliminary result, we prove the detectability of a consensus network.

\begin{lemma}[Detectability]\label{detectability}
  Let the matrix $A$ be row stochastic and irreducible. For any
  network node $j$, the pair $(A,C_j)$ is detectable.
\end{lemma}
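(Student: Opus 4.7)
The plan is to identify the unobservable subspace of $(A,C_j)$ and then show that the restriction of $A$ to it is Schur stable, which is precisely what detectability demands. Recall that the unobservable subspace $\mathcal{O}^{\perp}=\bigcap_{t\ge 0}\Ker(C_j A^t)$ is the largest $A$-invariant subspace contained in $\Ker(C_j)$, and $(A,C_j)$ is detectable if and only if every eigenvalue of the restriction $A|_{\mathcal{O}^{\perp}}$ lies in the open unit disk.

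First I would exploit the block structure induced by the observation. Since $A$ is irreducible, node $j$ has at least one in-neighbor, so $J:=V\setminus N_j$ is a proper subset of $V$ (the degenerate case $J=\emptyset$ yields $\mathcal{O}^{\perp}=\{0\}$ and the statement is trivial). Reordering the states so that the indices in $N_j$ come first, write $A=\left[\begin{smallmatrix}A_{NN}&A_{NJ}\\A_{JN}&A_{JJ}\end{smallmatrix}\right]$ and $C_j=[I\;\;0]$. Any $v\in\Ker(C_j)$ then has the form $v=(0,v_J)^{\transpose}$, and $A$-invariance of $\mathcal{O}^{\perp}$ forces $Av=(A_{NJ}v_J,A_{JJ}v_J)^{\transpose}$ to lie again in $\Ker(C_j)$; in particular $A_{NJ}v_J=0$, and the dynamics on $\mathcal{O}^{\perp}$ expressed in the $v_J$-coordinates is a restriction of $A_{JJ}$ to an $A_{JJ}$-invariant subspace. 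Consequently the spectrum of $A|_{\mathcal{O}^{\perp}}$ is contained in the spectrum of $A_{JJ}$.

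Second, because $J$ is a proper subset of $\until{n}$, Lemma~\ref{quasi_stoc} guarantees that $A_{JJ}$ is Schur stable. Therefore every eigenvalue of $A|_{\mathcal{O}^{\perp}}$ lies strictly inside the unit disk, which establishes detectability.

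The main subtlety I foresee is confirming that Lemma~\ref{quasi_stoc} really applies under the hypothesis stated here (row-stochasticity plus irreducibility, without explicit primitivity). A quick inspection of its proof shows that only $\rho(A)=1$ and Wielandt's characterization of spectral equality for non-negative matrices are invoked, and both rely on row-stochasticity and irreducibility alone; no use is made of aperiodicity. Hence Lemma~\ref{quasi_stoc} transfers verbatim to the present setting and closes the argument.
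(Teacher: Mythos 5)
Your proof is correct, but it follows a genuinely different route from the paper's. The paper argues via the PBH eigenvector test: by Wielandt's theorem, each eigenvalue of $A$ on the unit circle admits the eigenvector $D_k\textbf{1}$, which has no zero component and therefore cannot lie in $\Ker(C_j)$ (that would force its $N_j$-components to vanish), so no unstable mode is unobservable. You instead work with the unobservable subspace $\mathcal{O}^{\perp}$ directly: you show that the restriction $A|_{\mathcal{O}^{\perp}}$ is conjugate to the restriction of the principal submatrix $A_{JJ}$, $J=V\setminus N_j$, to an invariant subspace, and then invoke Lemma~\ref{quasi_stoc} to conclude Schur stability. Both arguments are sound. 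The paper's route silently relies on the peripheral eigenvalues of an irreducible nonnegative matrix being simple, so that exhibiting one eigenvector per peripheral eigenvalue settles the PBH test; your route avoids eigenvector analysis altogether and reuses machinery the paper has already built, at the price of checking---which you do correctly---that Lemma~\ref{quasi_stoc}, though stated for consensus (hence primitive) matrices, only uses $\rho(A)=1$ and irreducibility in its proof. Your argument also yields a small bonus: the quantitative bound $\rho\bigl(A|_{\mathcal{O}^{\perp}}\bigr)\le\rho(A_{JJ})<1$ on the decay rate of the unobservable modes, which the paper's proof does not provide.
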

\begin{proof}
  If $A$ is stochastic and irreducible, then it has at least $h\geq 1$
  eigenvalues of unitary modulus. Precisely, the spectrum of $A$
  contains
  $\{1=e^{i\theta_0},e^{i\theta_1},\dots,e^{i\theta_{h-1}}\}$. By
  Wielandt's theorem \cite{CDM:01}, we have $AD_k=e^{i\theta_k}D_k A$,
  where $k\in \{0,\dots,h-1\}$, and $D_k$ is a full rank diagonal
  matrix. By multiplying both sides of the equality by the vector of
  all ones, we have
  \mbox{$AD_k\textbf{1}=e^{i\theta_k}D_kA\textbf{1}=e^{i\theta_k}D_k\textbf{1}$},
  so that $D_k\textbf{1}$ is the eigenvector associated with the
  eigenvalue $e^{i\theta_k}$. Observe that the vector $D_k\textbf{1}$
  has no zero component, and that, by the eigenvector test
  \cite{HLT-AS-MH:01}, the pair $(A,C_j)$ is detectable. Indeed, since
  $A$ is irreducible, the neighbor set $N_j$ is nonempty, and the
  eigenvector $D_k \textbf{1}$, with $k\in \{0,\dots,h-1\}$, is not
  contained in $\Ker(C_j)$.
\end{proof}

Observe that the primitivity of the network matrix is not assumed
Lemma \ref{detectability}. By duality, a result on the stabilizability
of the pair $(A,B_j)$ can also be asserted.
\begin{lemma}[Stabilizability]
  Let the matrix $A$ be row stochastic and
  irreducible. For any network node $j$, the pair $(A,B_j)$ is
  stabilizable.
\end{lemma}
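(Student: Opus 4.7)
The plan is to derive stabilizability from the preceding detectability result via the standard duality that $(A,B_j)$ is stabilizable if and only if $(A^\transpose,B_j^\transpose)$ is detectable. Since $B_j = e_j$, this reduces to proving detectability of the pair $(A^\transpose, e_j^\transpose)$. The matrix $A^\transpose$ is not row stochastic (only column stochastic), so Lemma~\ref{detectability} does not apply verbatim, but it is still non-negative and irreducible with spectral radius one, so I would retrace the eigenvector-test argument used in the previous proof.

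First I would invoke the PBH / eigenvector test: detectability of $(A^\transpose, e_j^\transpose)$ fails only if some right eigenvector of $A^\transpose$ associated with an eigenvalue on or outside the unit circle lies in $\Ker(e_j^\transpose)$, i.e.\ has a zero $j$-th entry. Since $\rho(A^\transpose)=\rho(A)=1$, only the unit-modulus eigenvalues $\{1=e^{i\theta_0},e^{i\theta_1},\dots,e^{i\theta_{h-1}}\}$ need be considered. It therefore suffices to exhibit, for each such $\theta_k$, a right eigenvector of $A^\transpose$ with no zero component.

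For the eigenvalue $1$, the Perron--Frobenius theorem applied to the irreducible non-negative matrix $A^\transpose$ gives a strictly positive stationary vector $\pi$ with $A^\transpose\pi=\pi$, which trivially has no zero component. For the remaining unit-modulus eigenvalues, I would recycle Wielandt's theorem exactly as in Lemma~\ref{detectability}: there exist full-rank diagonal matrices $D_k$ such that $A D_k = e^{i\theta_k} D_k A$. Transposing and using $D_k^\transpose=D_k$ yields $D_k A^\transpose = e^{i\theta_k} A^\transpose D_k$. Right-multiplying by $\pi$ and using $A^\transpose\pi=\pi$ gives $A^\transpose (D_k \pi) = e^{-i\theta_k}(D_k\pi)$, so $D_k\pi$ is a right eigenvector of $A^\transpose$ for the unit-modulus eigenvalue $e^{-i\theta_k}$. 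Because $D_k$ has nonzero diagonal and $\pi>0$ componentwise, $D_k\pi$ has no zero entries, and the eigenvector test is satisfied for node $j$.

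The only mildly subtle step is the one I expect to be the main obstacle: the duality exchanges ``row stochastic'' with ``column stochastic'', so Lemma~\ref{detectability} cannot literally be cited. The remedy is the Wielandt-plus-Perron argument above, which transfers the positivity of the eigenvectors of $A$ (via $\mathbf{1}$) to positivity of the eigenvectors of $A^\transpose$ (via $\pi$). Once that transfer is in hand, the rest of the argument is a one-line application of the eigenvector test exactly as in the proof of Lemma~\ref{detectability}.
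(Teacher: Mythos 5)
Your proof is correct and follows exactly the route the paper intends: the paper offers no written proof of this lemma, only the remark that it follows ``by duality'' from the detectability result. Your elaboration supplies the details that remark glosses over --- in particular the observation that $A^\transpose$ is column rather than row stochastic, so one must replace the vector $D_k\mathbf{1}$ of the detectability proof by $D_k\pi$ with $\pi$ the strictly positive left Perron vector --- and this is precisely the right fix; the only implicit step you share with the paper's own detectability argument is that the peripheral eigenvalues of an irreducible nonnegative matrix are simple, so a single nowhere-zero eigenvector per eigenvalue settles the PBH test.
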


\begin{remark}[State estimation via local computation]
  If a linear system is detectable (resp. stabilizable), then a linear
  observer (resp. controller) exists to asymptotically estimate
  (resp. stabilize) the system state. By combining the above results
  with Lemma \ref{quasi_stoc}, we have that, under a mild assumption
  on the matrix $A$, the state of a linear network can be
  asymptotically observed (resp. stabilized) via local
  computation. Consider for instance the problem of designing an
  observer \cite{GB-GM:91}, and let $C_j = e_j^\transpose$. Take $G =
  - A_j$, where $A_j$ denotes the $j$-th column of $A$. Notice that
  the matrix $A+GC_j$ can be written as a block-triangular matrix, and 
  it is stable because of Lemma \ref{quasi_stoc}. Finally, since the
  nonzero entries of $G$ correspond to the out-neighbors\footnote{The
    agent $i$ is an out-neighbor of $j$ if the $(i,j)$-th entry of $A$
    is nonzero, or, equivalently, if $(j,i)$ belongs to the edge set.}
  of the node $j$, the output injection operation $GC_j$ only requires
  local information.  \oprocend
\end{remark}

A class of undetectable attacks is now presented. Notice that
misbehaving agents can arbitrarily change their initial state without
being detected during the consensus iterations, and, by doing so,
misbehaving components can cause at most a constant error on the final
consensus value. Indeed, let $A$ be a consensus matrix, and let $K$ be
the set of misbehaving agents. Let $x(0)$ be the network initial
state, and suppose that the agents $K$ alter their initial value, so
that the network initial state becomes $x(0)+B_K c$, where $c \in
\mathbb{R}^{|K|}$. Recall from \cite{CDM:01} that $\lim_{t \rightarrow
  \infty} A^t = \textbf{1}\pi$, where \textbf{1} is the vector of all
ones, and $\pi$ is such that $\pi A = \pi$. Therefore , the effect of
the misbehaving set $K$ on the final consensus state is $\textbf{1}\pi
B_K c$. Clearly, if the vector $x(0)+B_K c$ is a valid initial state,
the misbehaving agents cannot be detected. On the other hand, since it
is possible for uncompromised nodes to estimate the observable part of
the initial state of the whole network, if an acceptability region (or
an a priori probability distribution) is available on initial states,
then, by analyzing the reconstructed state, a form of intrusion
detection can be applied, e.g., see~\cite{EM-SM-SS:10}. We conclude
this paragraph by showing that, if the misbehaving vector $B_K c$
belongs to the unobservability subspace of $(A,C_j)$, for some $j$,
then the misbehaving agents do not alter the final consensus
value. Let $v$ be an eigenvector associated with the unobservable
eigenvalue $\bar z$, i.e., $(\bar z I -A) v = 0$ and $C_j v = 0$. We
have $\pi (\bar z I -A) v = (\bar z - 1) \pi v = 0$, and, because of
the detectability of $(A,C_j)$, $| \bar z | < 1$ (cf. Lemma
\ref{detectability}). Hence $\pi v = 0$. Therefore, if the attack $B_K
c$ is unobservable from any agent, then $\lim_{t \rightarrow \infty}
A^t B_K c = \textbf{1}\pi B_K c = 0$, so that the change of the
initial states of misbehaving agents does not affect the final
consensus value.

A different class of unidentifiable attacks consists of injecting a
signal corresponding to an input-zero for the current network
state. We start by characterizing the potential disruption caused by
misbehaving nodes that introduce nonzero, but exponentially vanishing
inputs.\footnote{An output-zeroing input can always be written as $u
  (k) = -(CA^{\nu} B)^\dag C A^{\nu +1} (K_\nu A)^k x(0) - (CA^\nu
  B)^\dag C A^{\nu +1} \left(\sum_{l = 0}^{k-1} (K_\nu A)^{k-1-l} B
    u_h (l) \right) + u_h (h)$, where $\nu \in \natural$, $(CA^\nu B)$
  is the first nonzero Markov parameter, $K_\nu = I - B(C A^{\nu}
  B)^\dag C A^\nu$ is a projection matrix, $x(0) \in \bigcap_{l=0}^\nu
  \Ker(C A^l)$ is the system initial state, and $u_h (k)$ is such that
  $C A^\nu B u_h (k) = 0$ \cite{JT:06}.}

\begin{lemma}[Exponentially stable input]\label{summable_error}
  Let $A$ be a consensus matrix, and let $K$ be a set of agents. Let
  $u : \natural \mapsto \real^{|K|}$ be exponentially decaying. There
  exists $z \in (0,1)$ and $\bar u \in \real^{|K|}$ such that
  \begin{align*}
    \lim_{t \rightarrow \infty} \sum_{\tau =0}^t A^{t-\tau} B_K
    u(\tau) \preceq (1-z)^{-1} \textbf{1} \pi B_K \bar u,
  \end{align*}
  where $\preceq$ denotes component-wise inequality, $\textbf{1}$ is
  the vector of all ones of appropriate dimension, and $\pi$ is such
  that $\pi A = \pi$.
\end{lemma}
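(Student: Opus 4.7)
The plan is to split the convolution into a "steady state" piece plus a "transient" piece, using the Perron projector of $A$. Since $A$ is a consensus matrix, it is row stochastic and primitive, so $A\mathbf{1} = \mathbf{1}$, $\pi A = \pi$ with $\pi \succeq 0$ and $\pi\mathbf{1}=1$, and in fact $A^{k} \to \mathbf{1}\pi$ geometrically. The key algebraic identity I would exploit is that for $N := A - \mathbf{1}\pi$, both $N\mathbf{1}=0$ and $\pi N = 0$, and hence $N^{k} = A^{k} - \mathbf{1}\pi$ for every $k \geq 1$. Because $A$ is primitive, the spectral radius of $N$ equals the second-largest eigenvalue modulus of $A$, which is strictly less than one; thus $\|N^{k}\| \leq C_1 r^{k}$ for some $r \in (0,1)$.

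First I would unpack the hypothesis ``exponentially decaying'' to mean that the chosen $z \in (0,1)$ and $\bar u \in \real^{|K|}$ with $\bar u \succeq 0$ satisfy $u(\tau) \preceq z^{\tau}\bar u$ componentwise (and $\|u(\tau)\| \leq C_{2}z^{\tau}$ in any norm). Writing $A^{0} = I = \mathbf{1}\pi + (I - \mathbf{1}\pi)$ and $A^{k} = \mathbf{1}\pi + N^{k}$ for $k\geq 1$, I would decompose
\begin{align*}
\sum_{\tau=0}^{t} A^{t-\tau} B_K u(\tau) \;=\; \mathbf{1}\pi B_K \sum_{\tau=0}^{t} u(\tau) \;+\; \sum_{\tau=0}^{t} \tilde N^{t-\tau} B_K u(\tau),
\end{align*}
where $\tilde N^{0} := I-\mathbf{1}\pi$ and $\tilde N^{k} := N^{k}$ for $k\geq 1$.

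Next, I would handle the two terms separately. For the first term, the exponential bound gives $\sum_{\tau=0}^{\infty} u(\tau) \preceq (1-z)^{-1}\bar u$, and since $B_K$ has nonnegative entries and $\pi \succeq 0$, the componentwise inequality is preserved under left-multiplication by $\mathbf{1}\pi B_K$, yielding $\mathbf{1}\pi B_K \sum_{\tau=0}^{\infty} u(\tau) \preceq (1-z)^{-1}\mathbf{1}\pi B_K \bar u$. For the second term, a standard convolution estimate gives
\begin{align*}
\Bigl\| \sum_{\tau=0}^{t} \tilde N^{t-\tau} B_K u(\tau) \Bigr\| \;\leq\; C_{1}C_{2}\|B_K\|\sum_{\tau=0}^{t} r^{\,t-\tau}z^{\tau},
\end{align*}
which tends to $0$ as $t\to\infty$ since both $r$ and $z$ lie in $(0,1)$.

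The main obstacle I foresee is purely bookkeeping: keeping track of the nonnegativity at each step so that the componentwise $\preceq$ survives, and justifying the interchange of the limit with the infinite sum in the first piece (which reduces to showing absolute summability of $u(\tau)$ and uniform boundedness of $\mathbf{1}\pi B_K$). The $\tau = t$ boundary term, where $A^{0}=I$ rather than $\mathbf{1}\pi$, is precisely absorbed by the auxiliary $\tilde N^{0} = I-\mathbf{1}\pi$ so that no special casing is needed. Combining the two asymptotic estimates gives the claimed bound.
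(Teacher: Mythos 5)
Your proof is correct and follows essentially the same route as the paper's: dominate the input componentwise by $z^{\tau}\bar u$, replace $A^{t-\tau}$ by the Perron projector $\mathbf{1}\pi$, and show the remainder vanishes via the convolution of two geometric sequences $\sum_{\tau} r^{t-\tau}z^{\tau}\to 0$. Your explicit handling of the $\tau=t$ boundary term through $\tilde N^{0}=I-\mathbf{1}\pi$ is a minor tidying of a point the paper's proof passes over silently, but the underlying estimates are identical.
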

\begin{proof}
  Let $z \in (0,1)$ and $0 \preceq u_0 \in \real^{|K|}$ be such that
  $u(k) \preceq z^k u_0$. Then, since $A$ is a nonnegative matrix, for
  all $t,\tau \in \natural$, with $t \ge \tau$, we have $A^{t-\tau}
  B_K u(\tau) \preceq A^{t-\tau} B_K z^\tau u_0$, and hence $\lim_{t
    \rightarrow \infty} \sum_{\tau =0}^t A^{t-\tau} B_K u(\tau)
  \preceq \lim_{t \rightarrow \infty} \sum_{\tau =0}^t A^{t-\tau} B_K
  z^\tau u_0$. Notice that $(1-z)^{-1} = \lim_{t \rightarrow \infty
  }\sum_{\tau=0}^t z^\tau$. We now show that
    $\lim_{t \rightarrow \infty} \sum_{\tau =0}^t z^\tau (\textbf{1}
    \pi -A^{t-\tau}) = \lim_{t \rightarrow \infty} \sum_{\tau =0}^t
    E(t,\tau) \preceq 0$,
  from which the theorem follows. Let $e(t,\tau)$ be any component of
  $E(t,\tau)$. Because $\lim_{t \rightarrow \infty} A^t = \textbf{1}
  \pi$, there exist $c$ and $\rho$, with $|z| \le |\rho | < 1$, such
  that $e(t,\tau) \le c z^\tau \rho^{t-\tau}$. We have 
  \begin{align*}
    \lim_{t \rightarrow \infty} \sum_{\tau = 0}^t c z^\tau
    \rho^{t-\tau} = \lim_{t \rightarrow \infty} c\rho^t \sum_{\tau =
      0}^t z^\tau\rho^{-\tau} = 0,
  \end{align*}
  so that $\sum_{\tau = 0}^t E(t,\tau)$ converges to zero as $t$
  approaches infinity.
\end{proof}


Following Lemma \ref{summable_error}, if the zero dynamics are
exponentially stable, then misbehaving agents can affect the final
consensus value by a constant amount without being detected, if and
only if they inject vanishing inputs along input-zero directions. If
an admissible region is known for the network state, then a tight
bound on the effect of misbehaving agents injecting vanishing inputs
can be provided. Notice moreover that, in this situation, a
well-behaving agent is able to detect misbehaving agents whose state
is outside an admissible region by simply analyzing its
state. Finally, for certain consensus networks, the effect of an
exponentially stable input decreases to zero with the cardinality of
the network. Indeed, let $\pi = \bar \pi / n$, where $\bar \pi$ is a
constant row vector and $n$ denotes the cardinality of the
network. For instance, if $A$ is doubly stochastic, then $\pi =
\textbf{1}^\transpose / n$ \cite{CDM:01}. Then, when $n$ grows, the
effect of the input $u(t) = z^t \bar u$, with $|z| < 1$, on the
consensus value becomes negligible.

The left-invertibility and the stability of the zero dynamics are not
an inherent property of a consensus system. Consider for instance the
graph of Fig. \ref{unstable_zeros}, where the agents $\{1,2\}$ are
malicious. If the network matrices are
\begin{figure}[tb]
  \centering \subfigure[]{
    \includegraphics[width=.466\columnwidth]{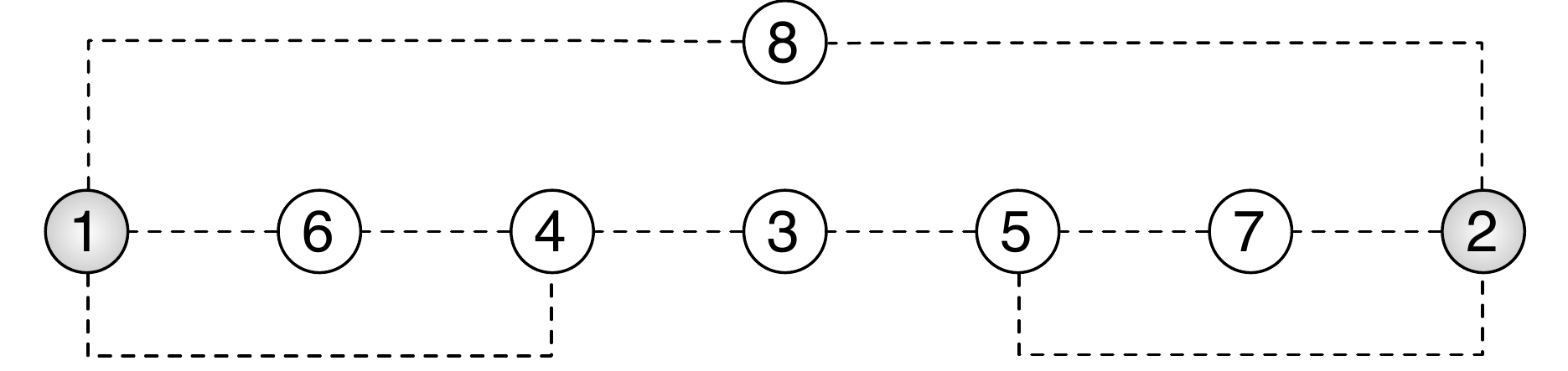}
    \label{unstable_zeros}
  } \subfigure[]{
    \includegraphics[width=.466\columnwidth]{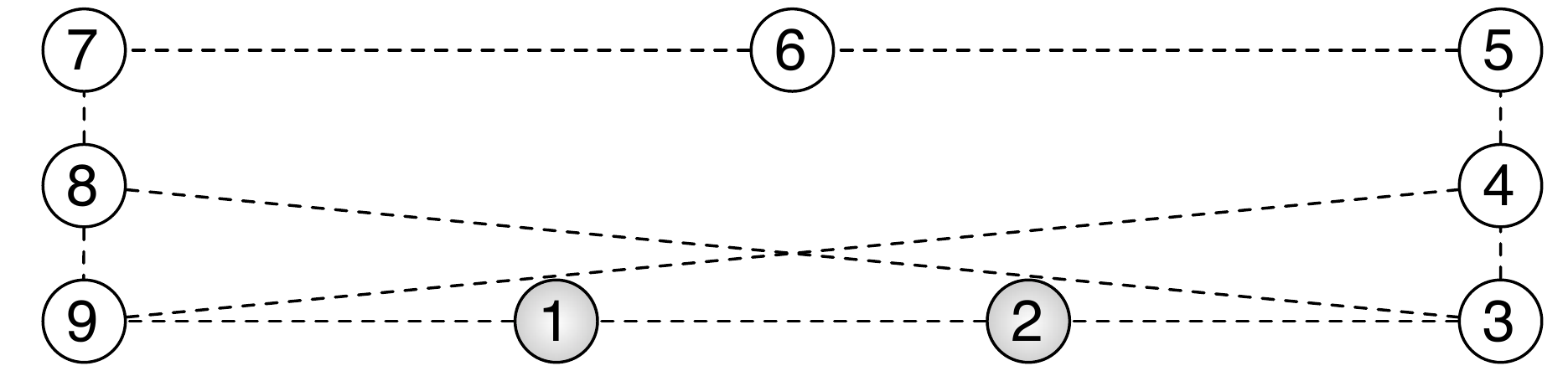}
    \label{unstable_circle}
  }
  \caption[Optional caption for list of figures]{In
    Fig. \ref{unstable_zeros} The agents $\{1,2\}$ are
    misbehaving. The consensus system $(A,B_{\{1,2\}},C_3)$ has
    unstable zeros. In Fig. \ref{unstable_circle} the agents $\{1,2\}$
    are misbehaving. The consensus system $(A,B_{\{1,2\}},C_6)$ is not
    left-invertible.}
\end{figure}
\begin{align*}
  A=\left[\begin{smallmatrix}
      1/2 & 0 & 1/2 & 0 & 0 & 0 & 0 & 0\\
      0 & 1/2 & 0 & 0 & 0 & 1/2 & 0 & 0\\
      0 & 0 & 1/3 & 1/3 & 1/3 & 0 & 0 & 0\\
      1/16 & 0 & 5/8 & 1/16 & 0 & 1/4 & 0 & 0\\
      0 & 1/16 & 1/4 & 0 & 5/16 & 0 & 3/8 & 0\\
      1/2 & 0 & 0 & 1/2 & 0 & 0 & 0 & 0\\
      0 & 1/3 & 0 & 0 & 2/3 & 0 & 0 & 0\\
      1/2 & 1/2 & 0 & 0 & 0 & 0 & 0 & 0
      \end{smallmatrix}
      \right],\, 
          B_{\{1,2\}}=\left[
            \begin{smallmatrix}
              1 & 0\\
              0 & 1\\
              0 & 0\\
              0 & 0\\
              0 & 0\\
              0 & 0\\
              0 & 0\\
              0 & 0
            \end{smallmatrix}
            \right], \,
      C_3=\left[
        \begin{smallmatrix}
         0 & 0 & 1 & 0 & 0 & 0 & 0 & 0\\
         0 & 0 & 0 & 1 & 0 & 0 & 0 & 0\\
         0 & 0 & 0 & 0 & 1 & 0 & 0 & 0
          \end{smallmatrix}
          \right],
\end{align*}
then the system $(A,B_{\{1,2\}},C_3)$ is left-invertible, but the
invariant zeros are $\{0,+2,-2\}$. Hence, for some initial
conditions, there exist non vanishing input sequences that do not
appear in the output. Moreover, for the graph in
Fig. \ref{unstable_circle}, let the network matrices be
\begin{align*}
  A&=\left[\begin{smallmatrix}
    1/3 &1/3 &0 &0 &0 &0 &0 &0 &1/3\\
    1/3 &1/3 &1/3 &0 &0 &0 &0 &0 &0\\
    0 &1/4 &1/4 &1/4 &0 &0 &0 &1/4 &0\\
    0 &0 &1/4 &1/4 &1/4 &0 &0 &0 &1/4\\
    0 &0 &0 &1/3 &1/3 &1/3 &0 &0 &0\\
    0 &0 &0 &0 &1/3 &1/3 &1/3 &0 &0\\
    0 &0 &0 &0 &0 &1/3 &1/3 &1/3 &0\\
    0 &0 &1/4 &0 &0 &0 &1/4 &1/4 &1/4\\
    1/4 &0 &0 &1/4 &0 &0 &0 &1/4 &1/4
      \end{smallmatrix}\right],\, 
    B_{\{1,2\}}=\left[\begin{smallmatrix}
    1 &0\\
    0 &1\\
    0 &0\\
    0 &0\\
    0 &0\\
    0 &0\\
    0 &0\\
    0 &0\\
    0 &0
    \end{smallmatrix}\right],\,
  C_6=\left[\begin{smallmatrix}
    0 &0 &0 &0 &1 &0 &0 &0 &0\\
    0 &0 &0 &0 &0 &1 &0 &0 &0\\
    0 &0 &0 &0 &0 &0 &1 &0 &0
     \end{smallmatrix}\right].
\end{align*}
It can be verified that the system $(A,B_{\{1,2\}},C_6)$ is not
left-invertible. Indeed, for zero initial conditions, any input of the
form $u_1=-u_2$ does not appear in the output sequence of the agent
$6$.
In some cases, the left-invertibility of a consensus system can be
asserted independently of the consensus matrix.

\begin{theorem}[Left-invertibility, single intruder
  case]\label{single_intruder} Let $A$ be a consensus matrix, and let
  $B_i = e_i$, $C_j = e_j^\transpose$. Then the system $(A,B_i,C_j)$ is
  left-invertible.
\end{theorem}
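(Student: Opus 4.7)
The plan is to exploit the fact that $(A,B_i,C_j)$ is single-input single-output and to reduce left-invertibility to the non-vanishing of a scalar transfer function. The Rosenbrock pencil $P(z)$ is square of size $(n+1)\times(n+1)$, so left-invertibility — i.e., $\textup{rank}(P(z))=n+1$ for all but finitely many $z$ — is equivalent to $\det(P(z)) \not\equiv 0$.

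First, I would apply a Schur complement to the block $zI-A$ in $P(z)$. Away from eigenvalues of $A$ this yields
\begin{align*}
  \det(P(z)) \;=\; -\det(zI-A)\, C_j(zI-A)^{-1}B_i \;=\; -\chi_A(z)\,G(z),
\end{align*}
where $\chi_A$ is the characteristic polynomial of $A$ and $G(z)=C_j(zI-A)^{-1}B_i$. Since $\chi_A(z)G(z)$ is actually a polynomial in $z$, the identity extends to all of $\mathbb{C}$, and therefore left-invertibility is equivalent to $G(z)\not\equiv 0$, which in turn is equivalent to the existence of some $t\in\mathbb{N}$ with $C_j A^{t} B_i \neq 0$.

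Next, I would compute this Markov parameter explicitly. Since $B_i=e_i$ and $C_j=e_j^\transpose$,
\begin{align*}
  C_j A^t B_i \;=\; e_j^\transpose A^{t} e_i \;=\; (A^{t})_{j,i}.
\end{align*}
By the paper's convention, $A_{h,k}\neq 0$ only if $(k,h)\in E$, so expanding $(A^t)_{j,i}=\sum A_{j,k_{t-1}}A_{k_{t-1},k_{t-2}}\cdots A_{k_1,i}$ shows that $(A^t)_{j,i}$ is a sum of \emph{nonnegative} products, one per directed walk of length $t$ from $i$ to $j$ in the communication graph $G$. Because $A$ is a consensus matrix, it is irreducible (in fact primitive), so $G$ is strongly connected and a directed walk from $i$ to $j$ of some length $t\le n-1$ exists; the corresponding product of edge-weights is strictly positive and nothing can cancel it, hence $(A^t)_{j,i}>0$.

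The argument is essentially routine once the SISO reduction is in place; the only point requiring care is the direction convention, namely verifying that $(A^t)_{j,i}$ corresponds to walks from $i$ to $j$ rather than the reverse, given the paper's transpose-style edge convention. With this confirmed, $G(z)\not\equiv 0$, $\det(P(z))\not\equiv 0$, and $(A,B_i,C_j)$ is left-invertible.
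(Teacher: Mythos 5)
Your proof is correct and follows essentially the same route as the paper's: reduce left-invertibility of the SISO system to the non-vanishing of some Markov parameter $C_jA^tB_i=(A^t)_{j,i}$, and conclude from irreducibility of the nonnegative matrix $A$. You merely make explicit two steps the paper asserts without justification (the Schur-complement/determinant argument for the SISO reduction, and the no-cancellation point that nonnegativity of $A$ turns the existence of a walk from $i$ to $j$ into strict positivity of $(A^t)_{j,i}$), which is a welcome tightening rather than a different approach.
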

\begin{proof}
  Suppose, by contradiction, that $(A,B_i,C_j)$ is not
  left-invertible. Then there exist state trajectories that, starting
  from the origin, are invisible to the output. In other words, since
  the input is a scalar, the Markov parameters $C_jA^tB_i$ have to be
  zero for all $t$. Notice the $(i,k)$-th component of $A^t$ is
  nonzero if there exists a path of length $t$ from $i$ to
  $k$. Because $A$ is irreducible, there exists $t$ such that
  $C_jA^tB_i\neq 0$, and therefore the consensus system is
  left-invertible.
\end{proof}

If in Theorem~\ref{single_intruder} one identifies the $i$-th node
with a single intruder, and the $j$-th node with an observer node, the
theorem states that, for known initial conditions of the network, any
two distinct inputs generated by a single intruder produce different
outputs at all observing nodes, and hence can be detected. Consider
for example a flocking application, in which the agent are supposed to
agree on the velocity to be maintained during the execution of the
task \cite{WR-RWB-EMA:07}. Suppose that a linear consensus iteration
is used to compute a common velocity vector, and suppose that the
states of the agents are equal to each other. Then no single
misbehaving agent can change the velocity of the team without being
detected, because no zero dynamic can be generated by a single agent
starting from a consensus state.

We now consider the case in which several misbehaving agents are
allowed to act simultaneously. The following result relating the
position of the misbehaving agents in the network and the zero
dynamics of a consensus system can be asserted.

\begin{theorem}[Stability of zero dynamics]\label{stability_zero}
  Let $K$ be a set of agents and let $j$ be a network node. The zero
  dynamics of the consensus system $(A,B_K,C_j)$ are exponentially
  stable if one of the following is true:
\begin{enumerate}
\item \label{case1}the system $(A,B_K,C_j)$ is left-invertible, and there are no
  edges from the nodes $K$ to $V\setminus \{N_j\cup K\}$;
\item \label{case2}the system $(A,B_K,C_j)$ is left-invertible, and there are no
  edges from the nodes $V\setminus \{N_j\cup K\}$ to $N_j$; or
\item \label{case3}the sets $K$ and $N_j$ are such that $K \subseteq N_j$.
\end{enumerate}
\end{theorem}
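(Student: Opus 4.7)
The plan is to partition the nodes by their relationship to the observer $j$ and the misbehaving set $K$, so that the constraint $y_j(t) \equiv 0$ reduces the zero-dynamics to an autonomous recursion on a principal submatrix of $A$ that Lemma~\ref{quasi_stoc} can render Schur-stable. Define the disjoint index sets $K_1 = K \cap N_j$, $K_2 = K \setminus N_j$, $N_1 = N_j \setminus K$, and $V_1 = V \setminus (N_j \cup K)$, so that $V = K_1 \cup N_1 \cup K_2 \cup V_1$, and rewrite $x(t+1) = Ax(t) + B_K u_K(t)$ in block form using this ordering. Since $C_j$ reads off the $N_j = K_1 \cup N_1$ coordinates, requiring $y_j(t) \equiv 0$ is equivalent to $x_{K_1}(t) \equiv 0$ and $x_{N_1}(t) \equiv 0$; forcing these identities forward one step produces an equation that pins down $u_{K_1}(t)$ together with an algebraic constraint coupling $x_{K_2}(t)$ and $x_{V_1}(t)$, leaving $(x_{K_2}, x_{V_1})$ driven by $u_{K_2}$ as the effective zero-dynamics state.

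For case (\ref{case3}), the hypothesis $K \subseteq N_j$ gives $K_2 = \emptyset$ and places the misbehaving inputs entirely inside the observation window. The only residual state is $x_{V_1}$, and substituting $x_{N_j} \equiv 0$ into its update collapses the recursion to $x_{V_1}(t+1) = A_{V_1, V_1}\,x_{V_1}(t)$. Lemma~\ref{quasi_stoc} yields that $A_{V_1, V_1}$ is Schur-stable, and the zero dynamics are exponentially stable.

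For case (\ref{case1}), the structural condition translates to $A_{V_1, K} = 0$; plugging this and $x_{K_1} = x_{N_1} = 0$ into the $V_1$-block again collapses the $V_1$ recursion to the autonomous form $x_{V_1}(t+1) = A_{V_1, V_1}\,x_{V_1}(t)$, so $x_{V_1}(t) \to 0$ geometrically by Lemma~\ref{quasi_stoc}. The leftover algebraic constraint $A_{N_1, K_2}\,x_{K_2}(t) = -A_{N_1, V_1}\,x_{V_1}(t)$ then forces $A_{N_1, K_2}\,x_{K_2}(t)$ to vanish exponentially. Case (\ref{case2}) is symmetric: the hypothesis $A_{N_j, V_1} = 0$ strengthens the algebraic constraint to $x_{K_2}(t) \in \Ker A_{N_1, K_2}$ at every step, and the $V_1$-update reduces to $x_{V_1}(t+1) = A_{V_1, V_1}\,x_{V_1}(t) + A_{V_1, K_2}\,x_{K_2}(t)$.

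The final step in cases (\ref{case1}) and (\ref{case2}) is to promote the constraint on $A_{N_1, K_2}\,x_{K_2}$ to exponential decay of $x_{K_2}$ itself, and this is where left-invertibility enters. The plan is to argue via the Rosenbrock pencil: left-invertibility of $(A, B_K, C_j)$ forces $\Vtar \cap \Image B_K = \{0\}$, so the input that sustains any zero-dynamics trajectory is uniquely determined and the invariant zeros of the triple coincide with the spectrum of the induced map on $\Vtar$. Under the block-triangular structure carved out by the partition, that induced map is similar to $A_{V_1, V_1}$, whose eigenvalues lie in the open unit disk by Lemma~\ref{quasi_stoc}. The main obstacle will be formalising that left-invertibility rules out any extra invariant zeros contributed by the $K_2$ block: a clean approach is to apply a change of basis that isolates $V_1$ and $K_2$ in the pencil, verify that left-invertibility makes the $K_2$ columns of the transformed pencil full-rank at every finite $z$, and conclude that the only invariant zeros are the eigenvalues of $A_{V_1, V_1}$.
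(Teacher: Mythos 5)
Your setup (the four-block partition $K_1, N_1, K_2, V_1$ and the reduction of the output-nulling constraint to an autonomous recursion on $V_1$ plus an algebraic constraint on $x_{K_2}$) is sound, and case \ref{case3} is complete. But for cases \ref{case1} and \ref{case2} the proof has a genuine gap exactly where you flag "the main obstacle": you never actually show that the $K_2$-component of a zero-dynamics trajectory decays, and the route you sketch for doing so is circular. Left-invertibility by itself only gives that the Rosenbrock pencil has full column rank for all but \emph{finitely many} $z$; it does not directly give that the $K_2$ columns are full rank "at every finite $z$," and the claim that the induced map on $\Vtar$ is similar to $A_{V_1,V_1}$ is essentially a restatement of the conclusion you are trying to prove. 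The missing idea is the one the paper uses: show that any obstruction localized in the $K_2$ block would produce a state-zero direction that works for \emph{every} $z\in\mathbb{C}$ (with a $z$-dependent input-zero direction), hence infinitely many invariant zeros, contradicting left-invertibility. Concretely, in case \ref{case1}, if $0\neq x_{K_2}\in\Ker A_{N_1,K_2}$ then $x=[0\;0\;x_{K_2}^\transpose\;0]^\transpose$ with $u_K=(zI-A_{KK})x$ restricted to the $K$-rows is output-nulling for all $z$; so left-invertibility forces $\Ker A_{N_1,K_2}=0$, whence every state-zero direction has $x_{V_1}\neq 0$, $z\in\sigma(A_{V_1,V_1})$, and $x_{K_2}$ is slaved (injectively) to $x_{V_1}$. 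In case \ref{case2} the analogous construction uses the invertibility of $zI-A_{V_1,V_1}$ for $|z|\ge 1$ (Lemma~\ref{quasi_stoc}) to build the $V_1$-component of a state-zero direction for every such $z$, again forcing $\Ker A_{N_1,K_2}=0$ and hence $x_{K_2}=0$. Without this "one direction, infinitely many zeros" argument, your cases \ref{case1} and \ref{case2} establish only that $A_{N_1,K_2}x_{K_2}(t)\to 0$, which is strictly weaker than exponential stability of the zero dynamics.

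A secondary remark: your time-domain phrasing obscures the fact that $u_{K_2}$ is a free input, so the $K_2$-update equation places no constraint on $x_{K_2}$ at all; the \emph{only} constraints on $x_{K_2}$ are the algebraic ones you list. This makes it especially important to prove injectivity of $A_{N_1,K_2}$ rather than hope the recursion supplies decay. The paper avoids this trap by working directly with invariant zeros: it shows every invariant zero is an eigenvalue of the Schur-stable block $A_{V_1,V_1}$, and uses left-invertibility only through the finiteness of the invariant-zero set.
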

\begin{proof}
  Let $z$ be an invariant zero, $x$ and $u$ a state-zero and
  input-zero direction, so that
  \begin{align}\label{cond1}
    (zI-A)x + B_K u = 0,\text{ and } C_j x = 0
  \end{align}
  Reorder the nodes such that the set $K$ comes first, the set $N_j
  \setminus K$ second, and the set $V\setminus \{K \cup N_j\}$
  third. The consensus matrix and the vector $x$ are accordingly
  partitioned as
  \begin{align*}
    A=
    \begin{bmatrix}
      A_{11} & A_{12} & A_{13}\\
      A_{21} & A_{22} & A_{23}\\
      A_{31} & A_{32} & A_{33}
    \end{bmatrix},\quad
    x=
    \begin{bmatrix}
        x_1\\
        x_2\\
        x_3
    \end{bmatrix},
  \end{align*}
  and the input and output matrices become $B_K=[I \; 0 \; 0]^\transpose$ and
  $C_j=[\ast \; I \; 0]$. For equations \eqref{cond1} to be verified, it has to be
  $x_2 = 0$, $z x_1 = A_{11} x_1 + A_{13} x_3 - u_k$, and
  \begin{align*}
     \begin{bmatrix}
       0\\
       z x_3
    \end{bmatrix}
    =
    \begin{bmatrix}
      A_{21} & A_{23}\\
      A_{31} & A_{33}
    \end{bmatrix}
    \begin{bmatrix}
      x_1\\
      x_3
    \end{bmatrix}
    .
  \end{align*}

  \textit{Case \ref{case1}.} Since there are no edges from the nodes
  $K$ to $V\setminus \{N_j\cup K\}$, we have $A_{31} = 0$, and hence
  it has to be $(zI - A_{33}) x_3 = 0$, i.e., $z$ needs to be an
  eigenvalue of $A_{33}$. We now show that $x_3 \neq 0$. Suppose by
  contradiction that $x_3 = 0$, and that $z$ is an invariant zero,
  with state-zero and input-zero direction $x = [x_1^\transpose \, 0
  \, 0]^\transpose$ and $u_K = (z I -A_{11}) x_1$, respectively. Then,
  for all complex value $\bar z$, the vectors $x$ and $u_K = (\bar z I
  - A_{11}) x_1$ constitute the state-zero and the input-zero
  direction associated with the invariant zero $\bar z$. Because the
  system is assumed to be left-invertible, there can only be a finite
  number of invariant zeros \cite{JT:06}, so that we conclude that
  $x_3 \neq 0$ or that the system has no zero dynamics. Because $z$
  needs to be an eigenvalue of $A_{33}$, and because of Lemma
  \ref{quasi_stoc}, we conclude that the zero dynamics are
  asymptotically stable.
  
  \textit{Case \ref{case2}.} Since there are no edges from the nodes
  $V\setminus \{N_j\cup K\}$ to $N_j$, we have $A_{23} = 0$. We now
  show that $\Ker(A_{21}) = 0$. Suppose by contradiction that $0 \neq
  x_1 \in \Ker(A_{21})$. Consider the equation $(zI - A_{33})x_3 =
  A_{31} x_1$, and notice that, because of Lemma \ref{quasi_stoc}, for
  all $z$ with $|z| \ge 1$, the matrix $zI - A_{33}$ is
  invertible. Therefore, if $|z| \ge 1$, the vector $[(x_1)^\transpose
  \, 0 \, ((zI - A_{33})^{-1} A_{31} x_1)^\transpose]^\transpose$ is a
  state-zero direction, with input-zero direction $u_K = -(zI-A_{11})
  x_1 + A_{13} x_3$. The system would have an infinite number of
  invariant zeros, being therefore not left-invertible. We conclude
  that $\Ker(A_{21}) = 0$. Consequently, we have $x_1 = 0$ and $(zI -
  A_{33}) x_3 = 0$, so that $|z| < 1$.

  \textit{Case \ref{case3}.} Reorder the variables such that the nodes
  $N_j$ come before $V\setminus N_j$. For the existence of a zero
  dynamics, it needs to hold $x_1 = 0$ and $(zI - A_{22}) x_2 = 0$.
  Hence, $|z| < 1$.
\end{proof}

\begin{figure}[tb]
  \subfigure[]{
    \includegraphics[width=.28\columnwidth]{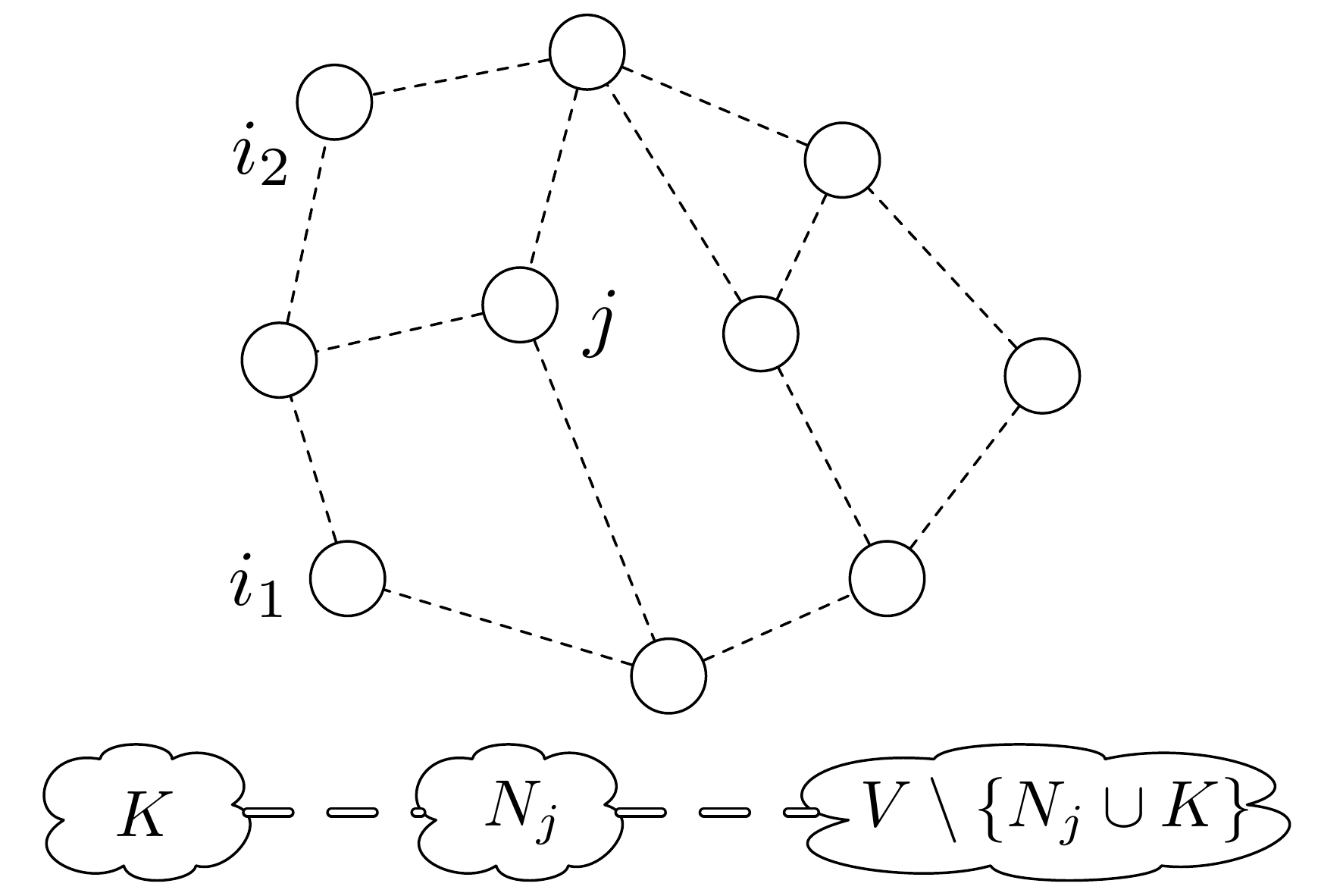}
    \label{cut_neigh}
  } \subfigure[]{
    \includegraphics[width=.28\columnwidth]{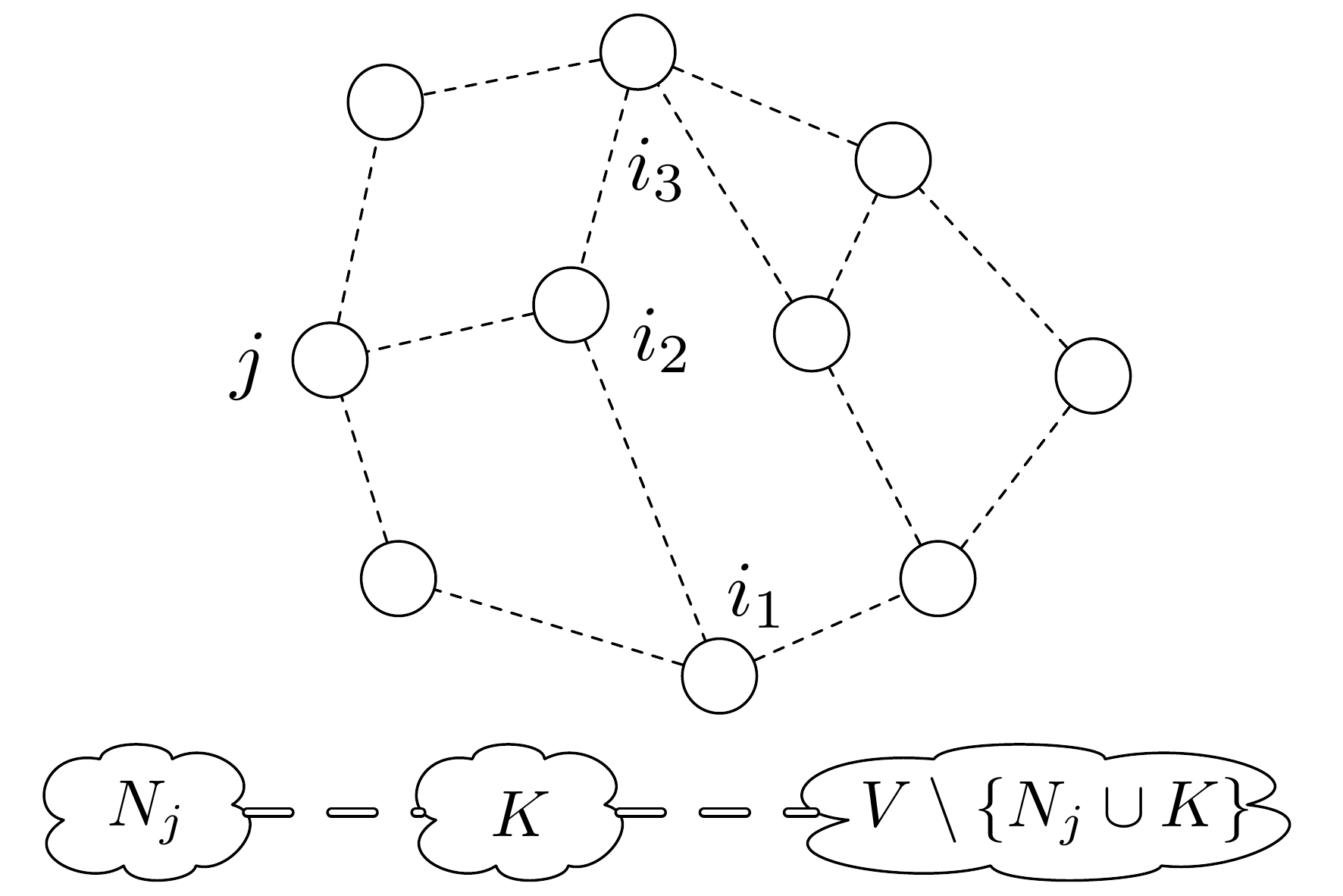}
    \label{int_cut}
  }
  \centering \subfigure[]{
    \includegraphics[width=.28\columnwidth]{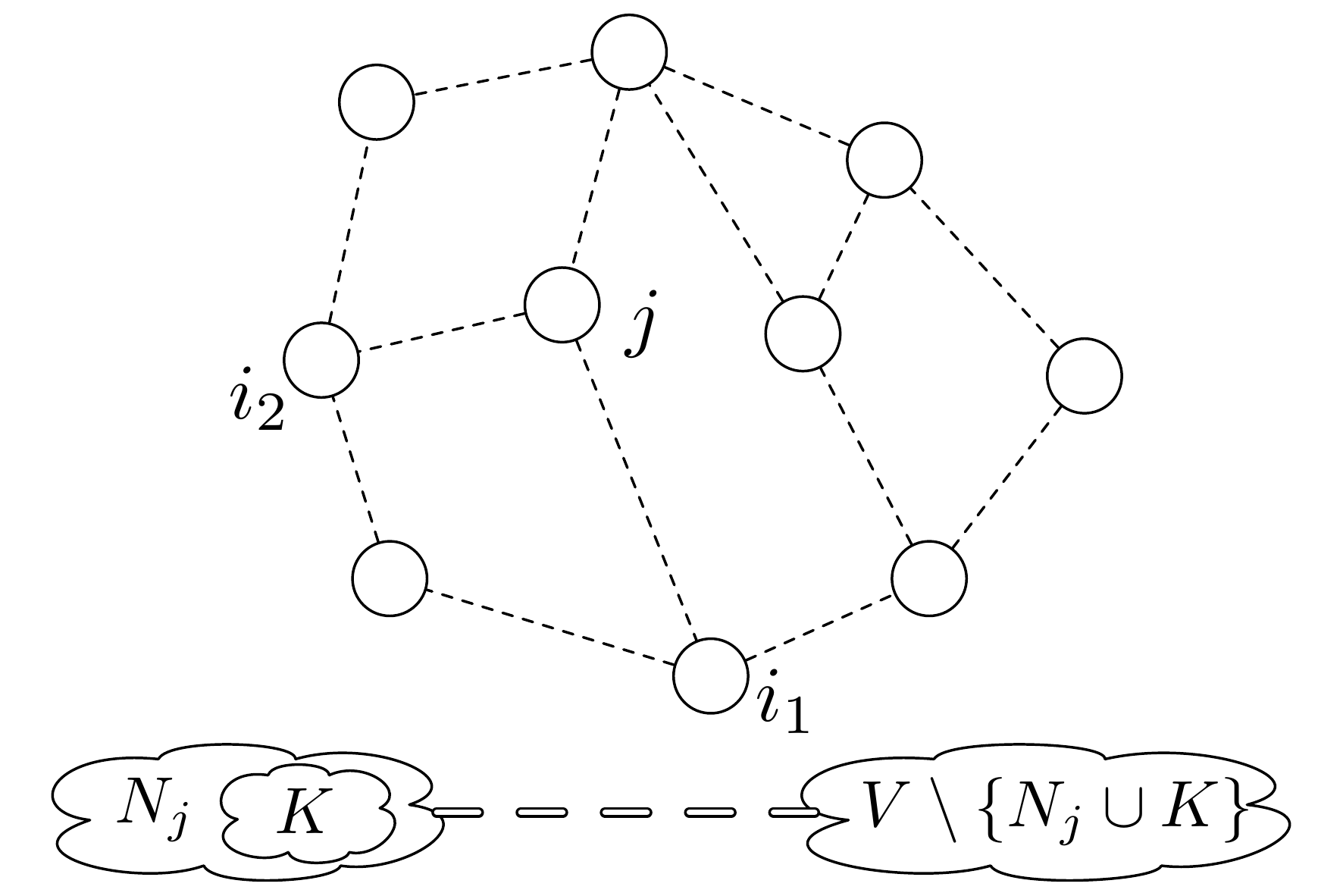}
    \label{intruders_neigh}
  }
  \caption[Optional caption for list of figures]{The stability of the
    zero dynamics of a left-invertible consensus system can be
    asserted depending upon the location of the misbehaving agents in
    the network. Let $j$ be the observer agent, and let $K$ be the
    misbehaving set. Then, the zero dynamics are asymptotically stable
    if the set $N_j$ separates the sets $K$ and $V\setminus \{N_j \cup
    K\}$ (cfr. Fig. \ref{cut_neigh}), or if the set $K$ separates the
    sets $N_j$ and $V\setminus \{N_j \cup K\}$
    (cfr. Fig. \ref{int_cut}), or if the set $K$ is a subset of $N_j$
    (cfr. Fig. \ref{intruders_neigh}).}
  \label{thm_stability}
\end{figure}

We are left to study the case of a network with zeros outside the open
unit disk, where intruders may inject non-vanishing inputs while
remaining unidentified. For this situation, we only remark that a
detection procedure based on an admissible region for the network
state can be implemented to detect inputs evolving along unstable zero
directions.

\section{Generic detection and identification of misbehaving
  agents}\label{generic_section}
In the framework of traditional control theory, the entries of the
matrices describing a dynamical system are assumed to be known without
uncertainties. It is often the case, however, that such entries only
approximate the exact values. In order to capture this modeling
uncertainty, \emph{structured systems} have been introduced and
studied, e.g., see \cite{JMD-CC-JW:03,WMW:85,KJR:88}. Let a structure
matrix $[M]$ be a matrix in which each entry is either a fixed zero or
an indeterminate parameter, and let the tuple of structure matrices
$([A],[B],[C],[D])$ denote the structured system
\begin{align}\label{structured}\begin{split}
  x(t+1)&=[A]x(t)+[B]u(t),\\
  y(t)&=[C]x(t)+[D]u(t).
\end{split}\end{align}
A numerical system $(A,B,C,D)$ is an admissible realization of
$([A],[B],[C],[D])$ if it can be obtained by fixing the indeterminate
entries of the structure matrices at some particular value, and two
systems are structurally equivalent if they are both an admissible
realization of the same structured system. Let $d$ be the number of
indeterminate entries altogether. By collecting the indeterminate
parameters into a vector, an admissible realization is mapped to a
point in the Euclidean space $\real^d$. A property which can be
asserted on a dynamical system is called \emph{structural} (or
\emph{generic}) if, informally, it holds for \emph{almost all}
admissible realizations. To be more precise, following \cite{KJR:88},
we say that a property is structural (or generic) if and only if the
set of admissible realizations satisfying such property forms a dense
subset of the parameters space.\footnote{A subset $S \subseteq P
  \subseteq \real^d$ is dense in $P$ if, for each $r \in P$ and every
  $\varepsilon > 0$, there exists $s \in S$ such that the Euclidean
  distance $\|s - r\| \le \varepsilon$.} Moreover, it can be shown
that, if a property holds generically, then the set of parameters for
which such property is not verified lies on an algebraic hypersurface
of $\real^d$, i.e., it has zero Lebesgue measure in the parameter
space. For instance, left-invertibility of a dynamical system is known
to be a structural property with respect to the parameters space
$\real^d$.

Let the connectivity of a structured system $([A],[B],[C])$ be the
connectivity of the graph defined by its nonzero parameters. In what
follows, we assume $[D]=0$, and we study the zero dynamics of a
structured consensus system as a function of its connectivity.
Let the generic rank of a structure matrix $[M]$ be the maximal rank
over all possible numerical realizations of $[M]$.

\begin{lemma}[Generic zero dynamics and
  connectivity]\label{generic_invariant_connectivity}
  Let $([A],[B],[C])$ be a $k$-connected structured system. If the
  generic rank of $[B]$ is less than $k$, then almost every numerical
  realization of $([A],[B],[C])$ has no zero dynamics.
\end{lemma}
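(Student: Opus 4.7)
The plan is to reduce the statement to a polynomial non-vanishing condition on the structure parameters, invoke the standard structural-genericity principle, and finally exhibit a single witnessing realization built from the $k$-connectivity hypothesis.

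First, I would reformulate the property analytically. A realization $(A,B,C)$ has no zero dynamics if and only if the Rosenbrock pencil
$P(z)=\bigl[\begin{smallmatrix} zI-A & B \\ C & 0\end{smallmatrix}\bigr]$
has full column rank $n+r$ for every $z\in\mathbb{C}$, where $r$ denotes the generic rank of $[B]$. Equivalently, some collection of $(n+r)\times(n+r)$ minors of $P(z)$---polynomials in $z$ whose coefficients are themselves polynomials in the indeterminate parameters $\theta\in\real^{d}$---must be coprime in $z$. Hence the set of realizations \emph{with} zero dynamics is contained in the vanishing locus of a single polynomial in $\theta$, and is therefore either all of $\real^{d}$ or a proper algebraic subvariety of Lebesgue measure zero. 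It thus suffices to exhibit \emph{one} admissible realization for which $P(z)$ has full column rank at every $z$.

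For this existence step I would use the $k$-connectivity. Since the structure graph is $k$-connected and $[B]$ has generic rank $r<k$, Menger's theorem supplies $r$ vertex-disjoint paths from the vertices indexing the nonzero columns of $[B]$ to the vertices indexing the nonzero rows of $[C]$. Choosing nonzero weights along these $r$ disjoint paths, and generic (possibly small) weights for the remaining indeterminate entries, yields a realization in which each input propagates to a distinct output coordinate through an independent chain. By the standard graph-theoretic characterization of the generic rank of structured transfer matrices (of van~der~Woude type), this forces an $(n+r)\times(n+r)$ minor of $P(z)$ to be a nonzero polynomial in $z$, and a careful choice of the path weights can ensure that this minor has no complex root, i.e.\ that $P(z)$ has full column rank at every $z\in\mathbb{C}$.

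The main obstacle is the last step---promoting full column rank \emph{generically in $z$} (i.e.\ left-invertibility) to full column rank at \emph{every} $z$. I would handle it by assigning weights inductively along the disjoint paths so that the $i$-th input first reaches its dedicated output at a distinct time $\nu_{i}$, producing a triangular block $[CA^{\nu_{1}}B\ \cdots\ CA^{\nu_{r}}B]$ with nonzero diagonal. Such a block forces the chosen minor to be a nonzero constant in $z$, and the genericity principle from the first paragraph then transfers this property to almost every admissible realization.
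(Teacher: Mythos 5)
Your high-level skeleton---reduce ``no zero dynamics'' to a polynomial non-vanishing condition on the parameter vector, so that exhibiting a single realization off the bad variety yields genericity---is sound, and it is the standard principle that also underlies the paper's proof. The paper, however, does not construct a witness at all: it obtains the $r$ disjoint input--output paths from $k$-connectedness, invokes Theorem~4.3 of \cite{JVDW:99} for generic left-invertibility, and then cites Lemma~3 of \cite{SS-CNH:08bis} for the generic absence of invariant zeros. The gap in your argument is in the witness construction itself.

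A realization supported only on the $r$ vertex-disjoint paths is a direct sum of delay lines together with the remaining, completely decoupled state vertices. For any state vertex $v$ lying on none of the paths and not among the $r$ selected output rows, the $v$-th column of the chosen $(n+r)\times(n+r)$ submatrix of $P(z)$ contains only the diagonal entry $z-a_{vv}=z$, so the minor factors as $c\,z^{q}$ with $q$ the number of such vertices---not a nonzero constant---and the realization has an invariant zero at $z=0$ (those vertices are unobservable). So your witness fails unless the $r$ paths happen to cover essentially all of $V$, which the hypotheses do not guarantee. Relatedly, the step from a triangular block $[C A^{\nu_1}B\;\cdots\;C A^{\nu_r}B]$ of first nonzero Markov parameters to a constant minor is not valid: the first Markov parameters fix the relative degrees (the structure at infinity), while the finite invariant zeros live in the remaining $n-\sum_i(\nu_i+1)$ state dimensions; already a SISO system with relative degree $\rho<n$ has $n-\rho$ finite zeros. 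What actually eliminates the invariant zeros generically is the surplus of outputs over inputs ($p\ge k>r$): each individual $(n+r)\times(n+r)$ minor of $P(z)$ generically has roots, but the minors obtained from different output selections are generically coprime, so their gcd is constant. Your argument dedicates one output per input and works with a single minor, and therefore never uses this surplus. To repair it you would either have to cover the off-path vertices by a family of disjoint cycles avoiding the linking (which is exactly the combinatorial content of van der Woude's formula for the generic number of invariant zeros) or argue coprimality across several minors, as the results cited in the paper do.
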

\begin{proof}
  Since the system $([A],[B],[C])$ is $k$-connected and the generic
  rank $r$ of $[B]$ is less than $k$, there are $r$ disjoint paths
  from the input to the output \cite{JVDW:99}. Then, from Theorem 4.3
  in \cite{JVDW:99}, the system $([A],[B],[C])$ is generically
  left-invertible. Additionally, by using Lemma 3 in
  \cite{SS-CNH:08bis}, it can be shown that $([A],[B],[C])$ has
  generically no invariant zeros. We conclude that almost every
  realization of $([A],[B],[C])$ has no nontrivial zero dynamics.
\end{proof}

Given a structured triple $([A],[B],[C])$ with $d$ nonzero
elements, the set of parameters that make $([A],[B],[C])$ a consensus
system is a subset $S$ of $\mathbb{R}^{d}$, because the matrix
$A$ needs to be row stochastic and primitive. A certain property
that holds generically in $\mathbb{R}^{d}$ needs not be valid
generically with respect to the feasible set $S$.
Let $([A],[B],[C])$ be structure matrices, and let $S\subset
\mathbb{R}^{d}$ be the set of parameters that make
$([A],[B],[C])$ a consensus system. We next show that the
left-invertibility and the number of invariant zeros are generic
properties with respect to the parameter space $S$.

\begin{theorem}[Genericity of consensus
  systems]\label{genericity_consensus}
  Let $([A],[B],[C])$ be a $k$-connected structured system. If the
  generic rank of $[B]$ is less than $k$, then almost every consensus
  realization of $([A],[B],[C])$ has no zero dynamics.
\end{theorem}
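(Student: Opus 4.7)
The plan is to transfer the genericity result of Lemma~\ref{generic_invariant_connectivity} from the full parameter space $\real^d$ to the lower-dimensional consensus subset $S\subset\real^d$. By Lemma~\ref{generic_invariant_connectivity}, the set $\mathcal{Z}\subseteq\real^d$ of realizations with nontrivial zero dynamics is contained in the vanishing locus of some nonzero polynomial $p$ in the $d$ structure parameters. The consensus set $S$ is cut out from $\real^d$ by three conditions: strict positivity of the nonzero entries of $A$, the $n$ affine equalities $A\mathbf{1}=\mathbf{1}$, and primitivity. Since the underlying communication graph is $k$-connected (hence strongly connected) and we are under the standing aperiodicity assumption, primitivity is automatic once the first two conditions hold, so $S$ is Euclidean-open inside the affine subspace $H=\{a\in\real^d:A\mathbf{1}=\mathbf{1}\}$, which has dimension $d-n$ and is irreducible as an algebraic variety.

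From this setup the problem reduces cleanly. Because $H$ is irreducible, the intersection $\mathcal{Z}\cap H$ is either all of $H$ or a proper algebraic subset of $H$, and in the latter case it has Lebesgue measure zero in $H$ and hence in the open subset $S$. Therefore it suffices to show that $p$ does not vanish identically on $H$, that is, to exhibit at least one row-stochastic matrix with the prescribed sparsity pattern for which the system $(A,B,C)$ has no zero dynamics.

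The construction of such a realization is the step I expect to be the main obstacle. A first tempting attempt is to take any $\tilde A\in\real^d\setminus\mathcal{Z}$, supplied by Lemma~\ref{generic_invariant_connectivity}, and normalize by $A=D^{-1}\tilde A$ with $D=\operatorname{diag}(\tilde A\mathbf{1})$; the resulting $A$ lies in $H$ with the correct sparsity, but the no-zero-dynamics property need not be preserved by this diagonal left-rescaling, since the operation is not a system similarity and does in general alter the invariant zeros of the Rosenbrock pencil. To sidestep this issue, I would instead revisit the combinatorial content of Lemma~\ref{generic_invariant_connectivity}: the nonvanishing of $p$ on $\real^d$ is witnessed by a specific monomial obtained by multiplying the edge weights along a family of $r$ vertex-disjoint input--output paths provided by \cite{JVDW:99}, where $r$ is the generic rank of $[B]$. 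Such a path uses at most one edge in each row of $A$, and, because every row contains at least $k>r$ nonzero positions by $k$-connectedness, one can choose in each row the parameter eliminated by the row-sum equality to lie \emph{off} all chosen paths. The witnessing monomial therefore survives as a nontrivial polynomial in the free coordinates of $H$, so $p|_H\not\equiv 0$, and the theorem follows from the reduction above.
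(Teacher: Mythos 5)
Your overall reduction coincides with the paper's: both arguments transfer Lemma~\ref{generic_invariant_connectivity} from $\real^d$ to the consensus subset $S$ by viewing $S$ as an open subset of an affine subspace $H$ of dimension $d-n$ and then exhibiting a single consensus realization without zero dynamics. The divergence, and the trouble, is in how that witness is produced. First, your combinatorial certificate addresses the wrong property: the monomial built from $r$ vertex-disjoint input--output paths is van der Woude's witness for the \emph{generic rank} of the system, i.e., for generic left-invertibility. ``No zero dynamics'' additionally requires the absence of invariant zeros, whose generic count in \cite{JVDW:99} (and in Lemma 3 of \cite{SS-CNH:08bis}, which is what Lemma~\ref{generic_invariant_connectivity} actually invokes) is governed by maximal disjoint families of paths \emph{and cycles} covering the vertices off the paths; the relevant non-vanishing polynomial is not the single path monomial you describe. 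Second, even for the correct polynomial $p$, the claim that your distinguished monomial ``survives'' the substitution $a_{ij_i}=1-\sum_{j\neq j_i}a_{ij}$ is asserted rather than proved: after an affine substitution the other monomials of $p$ expand and can in principle reproduce and cancel the image of the witness, so an extremality or degree argument is still needed before you may conclude $p|_H\not\equiv 0$.

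Both gaps are avoidable, and the paper avoids them by a construction that your stated objection does not touch. Take any nonnegative irreducible numerical realization $(A,B,C)$ with no zero dynamics, as supplied by Lemma~\ref{generic_invariant_connectivity}; let $x>0$ be its Perron eigenvector with eigenvalue $\rho$ and set $D=\operatorname{diag}(x)$. Unlike the left-normalization $D^{-1}\tilde A$ that you rightly reject, the map $(A,B,C)\mapsto(D^{-1}AD,\,D^{-1}B,\,CD)$ is a state-space similarity and preserves invariant zeros exactly, while the further scaling of the dynamics by $\rho^{-1}$ merely sends each zero $z$ to $z/\rho$. The result $\rho^{-1}D^{-1}AD$ is row stochastic with the prescribed sparsity pattern, so it furnishes the consensus witness directly and completes the genericity argument without any monomial bookkeeping.
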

\begin{proof}
  Let $d$ be the number of nonzero entries of the structured system
  $([A],[B],[C])$. From Theorem \ref{generic_invariant_connectivity}
  we know that, generically with respect to the parameter space
  $\mathbb{R}^d$, a numerical realization of $([A],[B],[C])$ has no
  zero dynamics. Let $S \subset \mathbb{R}^d$ be the subset of
  parameters that makes $([A],[B],[C])$ a consensus system. We want to
  show that the absence of zero dynamics is a generic property with
  respect to the parameter space $S$. Observe that $S$ is dense in
  $\mathbb{R}^{\bar d}$, where $\bar d \le d-n$ and $n$ is the
  dimension of $[A]$. Then \cite{ANK-SVF:75,KT:83}, it can be shown
  that, in order to prove that
  our property is generic with respect to $S$, it is sufficient to
  show that there exist some consensus systems which have no zero
  dynamics. To construct a consensus system with no zero dynamics
  consider the following procedure. Let $(A,B,C)$ be a nonnegative and
  irreducible linear system with no zero dynamics, where the number of
  inputs is strictly less that the connectivity of the associated
  graph. Notice that, following the above discussion, such system can
  always be found. The Perron-Frobenius Theorem for nonnegative
  matrices ensures the existence of a positive eigenvector $x$ of $A$
  associated with the eigenvalue of largest magnitude $r$
  \cite{CDM:01}. Let $D$ be the diagonal matrix whose main diagonal
  equals $x$, then the matrix $r^{-1}D^{-1}AD$ is a consensus matrix
  \cite{HM:88}. A change of coordinates of $(A,B,C)$ using $D$ yields
  the system $(D^{-1}AD,D^{-1}B,CD)$, which has no zero
  dynamics. Finally, the system $(r^{-1}D^{-1}AD,D^{-1}B,CD)$ is a
  $k$-connected consensus system with, generically, no zero
  dynamics. Indeed, if there exists a value $\bar z$, a state-zero
  direction $x_0$, and an input-zero direction $g$ for the system
  $(r^{-1}D^{-1}AD,D^{-1}B,CD)$, then the value $\bar z r$, with state
  direction $x_0/r$ and input direction $g$, is an invariant zero of
  $(D^{-1}AD,D^{-1}B,CD)$, which contradicts the hypothesis.
\end{proof}
Because a sufficiently connected consensus system has generically no
zero dynamics, the following remarks about the robustness of a generic
property should be considered. First, generic means open, i.e. some
appropriately small perturbations of the matrices of the system having
a generic property do not destroy this property. Second, generic
implies dense, hence any consensus system which does not have a
generic property can be changed into a system having this property
just by arbitrarily small perturbations. We are now able to state our
generic resilience results for consensus networks.

\begin{theorem}\emph{\bf (Generic identification of misbehaving agents)}
  Given a $k$-connected consensus network, generically, there exist
  only identifiable inputs for any set of $\lfloor \frac{k-1}{2}
  \rfloor$ misbehaving agents. Moreover, generically, there exist
  identifiable inputs for every set of $k-1$ misbehaving agents.
\end{theorem}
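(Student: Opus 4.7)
The plan is to split the statement into its two claims and handle each via Theorem~\ref{identification} combined with the structural result Theorem~\ref{genericity_consensus}. For the first claim, I fix any pair of sets $K_1, K_2$ each of cardinality at most $\lfloor (k-1)/2 \rfloor$; since duplicated columns do not increase rank, the generic rank of $[B_{K_1}\ B_{K_2}]$ equals $|K_1 \cup K_2|\le 2\lfloor (k-1)/2 \rfloor \le k-1 < k$. Because the communication graph is $k$-connected, Theorem~\ref{genericity_consensus} guarantees that, for almost every consensus realization, the combined system $(A, [B_{K_1}\ B_{K_2}], C_j)$ has no nontrivial zero dynamics, whatever the observing node $j$. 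Theorem~\ref{identification} then implies that every input injected by $K_1$ is identifiable. Since only finitely many triples $(K_1, K_2, j)$ appear, a finite intersection of generic (Zariski-open dense) subsets of the consensus parameter space remains generic, and the first claim follows.

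For the second claim, fix $K_1$ with $|K_1| = k-1$. By Definition~\ref{unidentifiable_input}, $u_{K_1}$ is unidentifiable precisely when, for some observing node $j$ and some $K_2 \neq K_1$, there exist $x_0$ and $u_{K_2}$ putting $(x_0,\, u_{K_1},\, -u_{K_2})$ in the zero dynamics of the combined triple $(A, [B_{K_1}\ B_{K_2}], C_j)$. Denote by $\mathcal{U}_{K_2, j}$ the linear subspace of $u_{K_1}$-sequences obtained by projecting this zero-dynamics set onto the first input coordinate. An identifiable input exists exactly when $\bigcup_{K_2 \neq K_1,\, j} \mathcal{U}_{K_2, j}$ is properly contained in the input function space. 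Because only finitely many pairs $(K_2, j)$ are involved, and a finite union of proper linear subspaces over the reals is itself proper, the task reduces to showing that each $\mathcal{U}_{K_2, j}$ is a proper subspace for almost every consensus realization.

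I would dispatch this last reduction by a case split. If $|K_1 \cup K_2| \le k-1$, then the combined input matrix has generic rank strictly less than the connectivity $k$, and Theorem~\ref{genericity_consensus} yields no zero dynamics, hence $\mathcal{U}_{K_2, j} = \{0\}$. The delicate case is $|K_1 \cup K_2| \ge k$, where Theorem~\ref{genericity_consensus} no longer applies. There I would argue that, since $|K_2| \le k - 1 < k$, the set $K_2$ is not a vertex cut separating the nodes $K_1 \setminus K_2$ from $j$, so there exists a directed path from some node of $K_1 \setminus K_2$ to $j$ avoiding $K_2$. Invoking the graph-theoretic characterization of structured system properties in~\cite{JVDW:99}, together with the argument underlying Lemma~\ref{generic_invariant_connectivity}, this produces a Markov-parameter direction of $B_{K_1}$ not in the span of the corresponding images of $B_{K_2}$ at $j$, so some $u_{K_1}$ is generically unmatchable by $K_2$. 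The principal obstacle is precisely this case: one must go beyond Theorem~\ref{genericity_consensus} and reason directly about the projection of the zero dynamics onto the $u_{K_1}$ coordinates, using $k$-connectedness in an essential way to show that this projection is generically strictly proper.
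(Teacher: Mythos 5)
Your treatment of the first claim is correct and is exactly the paper's argument: since $2\lfloor\frac{k-1}{2}\rfloor\le k-1<k$, the generic rank of $[B_{K_1}\ B_{K_2}]$ is below the connectivity, Theorem~\ref{genericity_consensus} (via Lemma~\ref{generic_invariant_connectivity}) kills the zero dynamics of every combined triple $(A,[B_{K_1}\ B_{K_2}],C_j)$, and Theorem~\ref{identification} finishes; your explicit remark that a finite intersection of generic sets is generic is a point the paper leaves implicit.

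The second claim is where your proposal has a genuine gap, and you name it yourself: the case $|K_1\cup K_2|\ge k$, where Theorem~\ref{genericity_consensus} is silent. The path-plus-Markov-parameter sketch you offer does not close it, because the existence of a single Markov-parameter direction of $B_{K_1}$ outside the instantaneous span of $B_{K_2}$ does not preclude $K_2$ from matching the output of $u_{K_1}$ \emph{dynamically} over time; what must be controlled is the zero dynamics of the combined system, not individual Markov parameters. The paper closes this case by a different device: it invokes van der Woude's generic-rank result for the system pencil to conclude that $(A,[B_{K_1}\ B_i],C_j)$ is generically \emph{left-invertible} for each single node $i\in K_2$ --- note that this augmented system has exactly $|K_1|+1=k$ inputs, which is precisely the number of disjoint input-output paths available in a $k$-connected graph, so the generic-rank machinery still applies even though the zero dynamics need not vanish. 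Left-invertibility confines the output-zeroing inputs (from any fixed initial state) to the finite-dimensional family attached to the finitely many invariant zeros, so the projection onto the $u_{K_1}$-coordinates that you correctly identify as the object of interest is automatically a proper subspace of the input function space, and identifiable inputs exist. In short: where you try (and stall) on proving the projection is proper by direct graph-theoretic reasoning, the paper gets it for free from generic left-invertibility of a $k$-input augmented system. If you want to complete your route, you should replace the Markov-parameter step with exactly this left-invertibility argument.
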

\begin{proof}
  Since $2\lfloor \frac{k-1}{2} \rfloor< k$, by Lemma
  \ref{generic_invariant_connectivity} the consensus system with any
  set of $2\lfloor \frac{k-1}{2} \rfloor$ has generically no zero
  dynamics. By Theorem \ref{identification}, any set of $\lfloor
  \frac{k-1}{2} \rfloor$ malicious agents is detectable and
  identifiable by every node in the network. We now consider the case
  of faulty agents. Let $V$ be the set of nodes, and $K_1,K_2\subset
  V$, with $|K_1|=|K_2|= k-1$, be two disjoint sets of faulty
  agents. Let $j\in V$. We need to show the existence of identifiable,
  i.e., faulty, inputs. By using a result of \cite{JVDW:99} on the
  generic rank of the matrix pencil of a structured system, since the
  given consensus network is $k$-connected and $|K_1| = k-1$, it can
  be shown that the system $(A,[B_{K_1} \enspace B_i],C_j)$, for all
  $i\in K_2$, is left-invertible, which confirms the existence of
  identifiable inputs for the current network state. By Definition
  \ref{malicious_definition}, we conclude that the faulty set $K_1$ is
  generically identifiable by any well-behaving agent.
\end{proof}

In other words, in a $k$-connected network, up to $\lfloor
\frac{k-1}{2} \rfloor$ (resp. $k-1$) malicious (resp. faulty) agents
are generically identifiable by every well behaving
agent. Analogously, it can be shown that generically up to $k-1$
misbehaving agents are generically detectable. In the next
section, we describe three algorithms to detect and identify
misbehaving agents.

\section{Intrusion detection algorithms}\label{approximate}
In this section we present three decentralized algorithms to detect
and identify misbehaving agents in a consensus network. Although the
first two algorithms require only local measurements, the complete
knowledge of the consensus network is necessary for the
implementation. The third algorithm, instead, requires the agents to
know only a certain neighborhood of the consensus graph, and it allows
for a local identification of misbehaving agents. As it will be clear
in the sequel, the third algorithm overcomes, under a reasonable set
of assumptions, the limitations inherent to centralized detection and
identification procedures.

Our first algorithm is based upon the following result.
\begin{theorem}[Detection filter]\label{detection_filter}
  Let $K$ be the set of misbehaving agents. Assume that the zero
  dynamics of the consensus system $(A,B_K,C_j)$ are exponentially
  stable, for some $j$. Let $A_{N_j}$ denote the $N_j$ columns of the
  matrix $A$. The filter
\begin{align}\begin{split}\label{filter}
z(t+1) &= (A+GC_{j})z(t) - GC_j x(t), \\ 
\tilde{x}(t) &=Lz(t)  + HC_j x(t),
\end{split}\end{align}
with $G=-A_{N_{j}}$, $H= C_{j}^\transpose$, and $L=I - HC_{j}$,
is such that, in the limit for $t\rightarrow \infty$, the vector
$\tilde x(t+1)-A\tilde x(t)$ is nonzero only if the input $u_K(t)$ is
nonzero. Moreover, if $K\subset N_j$, then the filter \eqref{filter}
asymptotically estimates the state of the network, independent of the
behavior of the misbehaving agents $K$.
\end{theorem}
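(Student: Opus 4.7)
The plan is to reduce the filter dynamics to a triangular block form induced by the partition $V = N_j \cup (V\setminus N_j)$ and then use Lemma~\ref{quasi_stoc} as the only non-trivial ingredient. First I would reorder the nodes so that the $p = |N_j|$ neighbors of $j$ come first. Then $C_j = [I \; 0]$, $H = C_j^\transpose = \bigl[\begin{smallmatrix}I\\0\end{smallmatrix}\bigr]$, $HC_j = \bigl[\begin{smallmatrix}I & 0\\0 & 0\end{smallmatrix}\bigr]$, and $L = \bigl[\begin{smallmatrix}0 & 0\\0 & I\end{smallmatrix}\bigr]$. Writing $A = \bigl[\begin{smallmatrix}A_{11} & A_{12}\\A_{21} & A_{22}\end{smallmatrix}\bigr]$, the choice $G = -A_{N_j}$ yields $A + GC_j = \bigl[\begin{smallmatrix}0 & A_{12}\\0 & A_{22}\end{smallmatrix}\bigr]$ and $-GC_j = \bigl[\begin{smallmatrix}A_{11} & 0\\A_{21} & 0\end{smallmatrix}\bigr]$. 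Partitioning $z = [z_1^\transpose \; z_2^\transpose]^\transpose$, $x = [x_1^\transpose \; x_2^\transpose]^\transpose$ and $B_K = \bigl[\begin{smallmatrix}B_{K,1}\\ B_{K,2}\end{smallmatrix}\bigr]$ accordingly, the filter collapses to the single relevant recursion $z_2(t+1) = A_{22} z_2(t) + A_{21} x_1(t)$, and the state reconstruction simplifies to $\tilde x(t) = Lz(t) + HC_j x(t) = [x_1(t)^\transpose \; z_2(t)^\transpose]^\transpose$, so $z_1$ plays no role.

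Next I would introduce the unmeasured-coordinate error $e(t) := z_2(t) - x_2(t)$. Using $x_2(t+1) = A_{21} x_1(t) + A_{22} x_2(t) + B_{K,2} u_K(t)$ and subtracting from the filter equation for $z_2$ produces the driven error dynamics $e(t+1) = A_{22} e(t) - B_{K,2} u_K(t)$. The crucial observation is that $A_{22}$ is the principal submatrix of the consensus matrix $A$ indexed by the proper subset $V\setminus N_j$ (nonempty since $A$ is irreducible and $j \notin N_j$), and so by Lemma~\ref{quasi_stoc} it is Schur stable. The stable-zero-dynamics hypothesis plays no active role in the derivation at this stage; it is what justifies the filter as a legitimate residual generator in the spirit of Section~\ref{notation}, and guarantees that no bounded misbehavior can perpetually masquerade as a zero input.

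I would then compute the residual block by block. The lower block of $\tilde x(t+1) - A\tilde x(t)$ is $z_2(t+1) - A_{21} x_1(t) - A_{22} z_2(t)$, which vanishes identically by the filter equation itself. The upper block is $x_1(t+1) - A_{11} x_1(t) - A_{12} z_2(t) = B_{K,1} u_K(t) - A_{12} e(t)$, so the residual takes the form
\begin{align*}
\tilde x(t+1) - A\tilde x(t) = \begin{bmatrix} B_{K,1} u_K(t) - A_{12} e(t) \\ 0 \end{bmatrix}.
\end{align*}
If $u_K \equiv 0$, the error equation reduces to the homogeneous Schur stable recursion $e(t+1) = A_{22} e(t)$, hence $e(t) \to 0$ and the residual tends to zero; this is the contrapositive of the first claim. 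For the second claim, if $K \subseteq N_j$ then $B_{K,2} = 0$, so $e(t+1) = A_{22} e(t)$ for every admissible $u_K$, whence $e(t) \to 0$ independently of the misbehavior, and $\tilde x(t) = [x_1(t)^\transpose \; z_2(t)^\transpose]^\transpose \to x(t)$.

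The main technical care needed is in verifying the block identities for $A + GC_j$, $-GC_j$, $HC_j$ and $L$ in the reordered basis, since these are what force the bottom block of the residual to vanish and what reveal $\tilde x$ to be the concatenation of the true measured part $x_1$ with the estimated part $z_2$. Once those algebraic identities are in place, Lemma~\ref{quasi_stoc} delivers everything else with essentially no further work.
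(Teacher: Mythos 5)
Your proof is correct and follows essentially the same route as the paper's: both form the observer error driven by $-B_Ku_K$, invoke Lemma~\ref{quasi_stoc} for the Schur stability of $A+GC_j$ (your $A_{22}$ is exactly its nontrivial block), and for $K\subseteq N_j$ observe that the misbehaving input does not drive the coordinates that enter $\tilde x$ (the paper phrases this as $\mathcal{B}_K\subseteq\Ker(L)$ with reachable set $\mathcal{B}_K$; you phrase it as $B_{K,2}=0$). Your explicit block computation, including the identically vanishing lower block of the residual, is just a coordinate-level rendering of the paper's argument and is sound.
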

\begin{proof}
  Let $G=-A_{N_{j}}$, and consider the estimation error $e(t+1)
  =z(t+1)-x(t+1) =(A+GC_{j})e(t)-B_K u_K(t)$.
  Notice that $Le (t)=Lz (t) + C_j^\transpose C_j x (t) -x (t)$, and
  hence $\tilde x (t) = x(t) + L e(t)$. Consequently, $\tilde x(t+1) -
  A \tilde x(t) = B_K u_K(t) +L e(t+1) - A L e(t)$. By using Lemma
  \ref{quasi_stoc}, it is a straightforward matter to show that
  $(A+GC_j)$ is Schur stable. If $u_K(t) = 0$, then $\tilde x(t+1) - A
  \tilde x(t)$ converges to zero. Suppose now that $K\subseteq
  N_j$. The reachable set of $e$, i.e., the minimum $(A+GC_{j})$
  invariant containing $\mathcal{B}_{K}$, coincides with
  $\mathcal{B}_K$. Indeed $(A+GC_j)\mathcal{B}_K=\emptyset$. Since
  $\mathcal{B}_K \subseteq \Ker(L)$ by construction, the vectors
  $Le(t)$ and $\tilde x(t) - x(t)$ converge to zero.
\end{proof}

By means of the filter described in the above theorem, a distributed
intrusion detection procedure can be designed, see
\cite{FP-AB-FB:06v}. Here, each well-behaving agent only implements
one detection filter, making the asymptotic detection task
computationally easy to be accomplished. We remark that, since the
filter converges exponentially, an exponentially decaying input of
appropriate size may remain undetected (see Lemma \ref{summable_error}
for a characterization of the effect of exponentially vanishing inputs
on the final consensus value). For a finite time detection of
misbehaving agents, and for the identification of misbehaving
components, a more sophisticated algorithm is presented in Algorithm
\ref{algorithm_1}.
\begin{theorem}[Complete identification]
  Let $A$ be a consensus matrix, let $K$ be the set of misbehaving
  agents, and let $c$ be the connectivity of the consensus
  network. Assume that:
\begin{enumerate}
  \item every agent knows the matrix $A$ and $k \ge |K|$, and
  \item $k < c$, if the set $K$ is faulty, and $2k < c$ if the set $K$
    is malicious.
\end{enumerate}
Then the Complete Identification algorithm allows each well-behaving
agent to generically detect and identify every misbehaving agent in
finite time.
\end{theorem}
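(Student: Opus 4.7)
The plan is to reduce the identification task to a finite bank of residual generators of the form \eqref{res_gen}, one for each candidate subset $K' \subseteq V$ with $|K'| \le k$. By assumption (i) every agent $j$ knows $A$ and thus can construct, from the triple $(A, B_{K'}, C_j)$, a dead-beat unknown-input residual $r_{K'}(t)$ that asymptotically vanishes if and only if the exogenous input is supported on $K'$. The \Algo1 algorithm then declares as $K$ the unique candidate whose residual remains zero past the common dead-beat horizon.

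First I would verify, for every $K'$, the FDI solvability condition $\mathcal{B}_{K'}\cap(\mathcal{V}^{*}_{\bar K}+\mathcal{S}^{*}_{\bar K})=\emptyset$ recalled in Section~\ref{notation}, where $\bar K$ ranges over the remaining admissible supports. In the malicious case the hypothesis $2k<c$ lets any two sets $K_1,K_2$ with $|K_i|\le k$ be treated jointly, and Lemma~\ref{generic_invariant_connectivity} together with Theorem~\ref{genericity_consensus} guarantee that, generically, the pencil of $(A,[B_{K_1}\;B_{K_2}],C_j)$ has full column rank. By Theorem~\ref{identification} no nontrivial zero dynamics survive and the residual bank can be built. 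In the faulty case the hypothesis $k<c$ is enough, because by Definition~\ref{malicious_definition} the injected input is identifiable by assumption, so that the conflicting pairs $\bar K$ need only be distinguished up to admissible (i.e.\ non-colluding) inputs; the pair-wise left-invertibility required for each residual generator then follows from Theorem~\ref{genericity_consensus} applied to each $(A,B_{K'},C_j)$ with $|K'|<c$.

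Once the residual bank is available the finite-time conclusion is routine. Because each generator is designed as a dead-beat device as permitted by \cite{MAM-GCV-ASW-CM:89} and recalled after \eqref{res_gen}, after a horizon $\tau_{\max}$ bounded uniformly by the observability indices of the $(A,B_{K'},C_j)$ the residuals $r_{K'}$ with $K'\neq K$ become and remain nonzero on a generic $u_K$, while $r_K$ stays identically zero. Agent $j$ therefore identifies $K$ by a single scan of the bank at time $\tau_{\max}$. Note that the ``generically'' qualifier in the statement is inherited from Theorem~\ref{genericity_consensus}: on the zero-measure exceptional set of consensus matrices one may encounter spurious zero dynamics that prevent unique selection.

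The main obstacle is the faulty case, where the naive pair-wise reading of Theorem~\ref{identification} would require $2k<c$, matching the malicious bound and losing a factor of two. The technical crux is to exploit the non-colluding hypothesis, namely that by Definition~\ref{malicious_definition} the injected input is already identifiable, in order to replace the joint no-zero-dynamics condition on $(A,[B_{K_1}\;B_{K_2}],C_j)$ with the weaker one-sided FDI solvability condition. Once this reduction is made, the genericity argument of Section~\ref{generic_section} applies with the parameter $k$ rather than $2k$, yielding the sharper bound $k<c$ stated in assumption (ii).
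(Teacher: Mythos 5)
Your proposal is correct in substance and rests on the same machinery as the paper: a bank of dead-beat FDI residual generators whose solvability is guaranteed generically by the connectivity hypotheses, with the factor-of-two saving in the faulty case coming, exactly as you argue, from the definitional identifiability of non-colluding inputs (Definition~\ref{malicious_definition}). Where you diverge is in the bookkeeping of the bank and the decision rule, and the paper's version is slightly more robust. You attach one residual to each candidate support $K'$ with $|K'|\le k$ and select ``the unique candidate whose residual remains zero''; but when $|K|<k$ every admissible superset of $K$ also produces a vanishing residual, so uniqueness fails and you must take the minimal vanishing support (or intersect all vanishing supports). The paper instead enumerates the $\binom{n-1}{k+1}$ sets $\tilde K$ of size exactly $k+1$, designs within each $\Sigma_{\tilde K}=(A,B_{\tilde K},C_j)$ one residual per agent decoupled from the other $k$ inputs, and exonerates an agent $m$ as soon as its residual vanishes in some $\tilde K\supseteq K\cup\{m\}$; misbehaving agents are those whose residuals are nonzero in every set. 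This agent-level exoneration handles $|K|<k$ automatically and matches the printed algorithm. One further imprecision: in the faulty case the relevant system $(A,[B_{K'}\;B_i],C_j)$ has up to $k+1$ inputs while the connectivity is only $k+1$, so Theorem~\ref{genericity_consensus} (which requires the number of inputs to be \emph{strictly} less than the connectivity) does not apply there; what holds generically is left-invertibility, via the structured-systems result invoked in the generic-identification theorem, and that weaker property suffices precisely because faulty inputs are identifiable by definition.
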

\begin{proof}
  We focus on agent $j$. Let $k=|K|$, and let $\mathcal{K}$ be the set
  containing all the $\binom{n-1}{k+1}$ combinations of $k+1$ elements
  of $V\setminus \{j\}$. For each set $\tilde K\in \mathcal{K}$,
  consider the system $\Sigma_{\tilde{K}}=(A,B_{\tilde{K}},C_j)$, and
  compute\footnote{We refer the interested reader to
    \cite{MAM-GCV-ASW-CM:89} for a design procedure of a dead beat
    residual generator. Notice that the possibility of detecting and
    identifying the misbehaving agents is, as discussed in Section
    \ref{resilience_section} and \ref{generic_section}, guaranteed by
    the absence of zero dynamics in the consensus system.} a set of
  residual generator filters for $\Sigma_{\tilde{K}}$. If the
  connectivity of the communication graph is sufficiently high, then,
  generically, each residual function is nonzero if and only if the
  corresponding input is nonzero. Let $K$ be the set of misbehaving
  nodes, then, whenever $K\subset \tilde{K}$, the residual function
  associated with the input $\tilde{K}\setminus K$ becomes zero after
  an initial transient, so that the agent $\tilde{K}\setminus K$ is
  recognized as well-behaving. By exclusion, because the residuals
  associated with the misbehaving agents are always nonzero, the set
  $K$ is identified.
\end{proof}

By means of the Complete Identification algorithm, the detection and
the identification of the misbehaving agents take place in finite
time, because the residual generators can be designed as dead-beat
filters, and independent of the misbehaving input. It should be
noticed that, although no communication overhead is introduced in the
consensus protocol, the Complete Identification procedure relies on
strong assumptions. First, each agent needs to know the entire graph
topology, and second, the number of residual generators that each node
needs to design is proportional to $\binom{n-1}{k}$. Because an agent
needs to update these filters after each communication round, when the
cardinality of the network grows, the computational burden may
overcome the capabilities of the agents, making this procedure
inapplicable.

\IncMargin{1em}
\begin{algorithm}[t]
  \SetKwInOut{Input}{Input} 
  \SetKwInOut{Require}{Require}
  \SetKwInOut{Title}{Algorithm}
  \SetAlgoCaptionSeparator{}

  \Input{$A$; $k\ge |K|$;}

  \Require{The connectivity of $A$ to be $k+1$, if $K$ is faulty, and
    $2k+1$ otherwise;}

  \BlankLine

  Compute the residual generators for every set
  of $k+1$ misbehaving agents\; \While{the misbehaving agents are
    unidentified}{ Exchange data with the neighbors\; Update the
    state\; Evaluate the residual functions\; \If{every
      $i_{\text{th}}$ residual is nonzero}{ Agent $i$ is recognized as
      misbehaving.}  }
  
  \caption{\textit{Complete Identification}$\quad$($j$-th agent)}
  \label{algorithm_1}
\end{algorithm}
\DecMargin{1em}

In the remaining part of this section, we present a computationally
efficient procedure that only assumes partial knowledge of the
consensus network but yet allows for a local identification of the
misbehaving agents. Let $A$ be a consensus matrix, and observe that it
can be written as $A_d+\varepsilon \Delta$, where $\| \Delta
\|_{\infty}=2$, $0 \le \varepsilon \le 1$, and $A_d$ is block diagonal
with a consensus matrix on each of the $N$ diagonal blocks. For
instance, let $A=[a_{kj}]$, and let $V_1,\dots,V_N$ be the subsets of
agents associated with the blocks. Then the matrix $A_d=[\bar a_{kj}]$
can be defined as
\begin{enumerate}
  \item $\bar a_{kj}=a_{kj}$ if $k\neq j$, and $k,j \in V_i$, $i\in \{1,\dots,N\}$,
  \item $\bar a_{kk}=1-\sum_{j\in V_i, j\neq k} a_{kj}$, and
  \item $\bar a_{kj}=0$ otherwise.
\end{enumerate}
Moreover, $\Delta=2(A-A_d)/\| (A-A_d) \|_{\infty}$, and
$\varepsilon=\frac{1}{2}\| A-A_d \|_{\infty}$. Note that, if
$\varepsilon$ is ``small'', then the agents belonging to different
groups are weakly coupled. We assume the groups of weakly coupled
agents to be given, and we leave the problem of finding such
partitions as the subject of future research, for which the ideas
presented in \cite{RP-PK:81, JHC-JC-RAW:84} constitute a very relevant
result.

We now focus on the $h$-th block. Let $K=v\cup l$ be the set of
misbehaving agents, where $v=V_h \cap K$, and $l=K\setminus v$. Assume
that the set $v$ is identifiable by agent $j \in V_h$ (see Section
\ref{resilience_section}). Then, agent $j$ can identify the set $v$ by
means of a set of residual generators, each one designed to decouple a
different set of $|v|+1$ inputs.
 To be more precise, let $i \in V_h \setminus v$, and
consider the system
\begin{align}\label{res_v}\begin{split}
    \begin{bmatrix}
      x\\
      w_v
    \end{bmatrix}
    ^+&=
    \begin{bmatrix}
      A_d & 0\\
      E_vC_j & F_v
    \end{bmatrix}
    \begin{bmatrix}
      x\\
      w_v
    \end{bmatrix}
    +
    \begin{bmatrix}
      B_v & B_{i}\\
      0 & 0
    \end{bmatrix}
    \begin{bmatrix}
      u_v\\
      u_i
    \end{bmatrix}
    ,\\
    r_v&=
    \begin{bmatrix}
      H_vC_j & M_v
    \end{bmatrix}
    \begin{bmatrix}
      x\\
      w_v
    \end{bmatrix}
    ,
\end{split}\end{align}
and the system
\begin{align}\label{res_i}\begin{split}
    \begin{bmatrix}
      x\\
      w_i
    \end{bmatrix}
    ^+&=
    \begin{bmatrix}
      A_d & 0\\
      E_i C_j & F_i
    \end{bmatrix}
    \begin{bmatrix}
      x\\
      w_i
    \end{bmatrix}
    +
    \begin{bmatrix}
      B_v & B_{i}\\
      0 & 0
    \end{bmatrix}
    \begin{bmatrix}
      u_v\\
      u_i
    \end{bmatrix}
    ,\\
    r_i&=
    \begin{bmatrix}
      H_i C_j & M_i
    \end{bmatrix}
    \begin{bmatrix}
      x\\
      w_i
    \end{bmatrix}
    ,
\end{split}\end{align}
where the quadruple $(F_v,E_v,M_v,H_v)$ (resp. $(F_i,E_i,M_i,H_i)$)
describes a filter of the form \eqref{res_gen}, and it is designed as
in \cite{MAM-GCV-ASW-CM:89}. Then the misbehaving agents $v$ are
identifiable by agent $j$ because $v$ is the only set such that, for
every $i \in V_h \setminus v$, it holds $r_v\not\equiv 0$ and $r_i
\equiv 0$ whenever $u_v \not\equiv 0$. It should be noticed that,
since $A_d$ is block diagonal, the residual generators to identify the
set $v$ can be designed by only knowing the $h$-th block of $A_d$, and
hence only a finite region of the original consensus network. By
applying the residual generators to the consensus system
$A_d+\varepsilon \Delta$ with misbehaving agents $K$ we get
\begin{align*}
  \begin{bmatrix}
      \hat x\\
      \hat w_v
  \end{bmatrix}
  ^+&=\bar A_{\varepsilon,v}
  \begin{bmatrix}
    \hat x\\
    \hat w_v
  \end{bmatrix}
  +
  \begin{bmatrix}
      B_v & B_l & B_i\\
      0 & 0 & 0
  \end{bmatrix}
    \begin{bmatrix}
      u_v\\
      u_l\\
      u_i
  \end{bmatrix}
  ,\\
  \hat r_v&=
  \begin{bmatrix}
      H_vC_j & M_v
  \end{bmatrix}
    \begin{bmatrix}
      \hat x\\
      \hat w_v
  \end{bmatrix}
  ,
\end{align*}
and
\begin{align*}
    \begin{bmatrix}
      \hat x\\
      \hat w_i
  \end{bmatrix}
  ^+&=\bar A_{\varepsilon,i}
  \begin{bmatrix}
    \hat x\\
    \hat w_i
  \end{bmatrix}
  +
  \begin{bmatrix}
      B_v & B_l & B_i\\
      0 & 0 & 0
  \end{bmatrix}
    \begin{bmatrix}
      u_v\\
      u_l\\
      u_i
  \end{bmatrix}
  ,\\
  \hat r_i&=
  \begin{bmatrix}
      H_iC_j & M_i
  \end{bmatrix}
  \begin{bmatrix}
    \hat x\\
    \hat w_i
  \end{bmatrix}
  ,
\end{align*}
where
\begin{align*}
  \bar A_{\varepsilon,v}= 
  \begin{bmatrix}
      A_d+\varepsilon \Delta & 0\\
      E_vC_j & F_v
  \end{bmatrix}
  , \enspace
  \bar A_{\varepsilon,i}=
  \begin{bmatrix}
    A_d+\varepsilon \Delta & 0\\
    E_iC_j & F_i
  \end{bmatrix}
  .
\end{align*}
Because of the matrix $\Delta$ and the input $u_l (t)$, the residual
$r_i (t)$ is generally nonzero even if $u_i \equiv 0$. However, the
misbehaving agents $v$ remain identifiable by $j$ if for each $i\in
V_h \setminus v$ we have $\|\hat r_v\|_{\infty} > \|\hat
r_i\|_{\infty}$ for all $u_v \not\equiv 0$.

\IncMargin{1em}
\begin{algorithm}[t]
  \SetKwInOut{Input}{Input}
  \SetKwInOut{Require}{Require}
  \SetKwInOut{Set}{Define}
  \SetKwInOut{Title}{Algorithm}


  \Input{$A_h$; $k_j\ge |K \cap V_h|$; threshold $T_h$}

  \Require{The connectivity of $A_d^j$ to be $k_j+1$, if $K$ is
    faulty, and $2 k_j+1$ otherwise;}

  \BlankLine

  \While{the misbehaving agents are unidentified}{

    Exchange data with the neighbors\; 

    Update the state\;

    Evaluate the residual functions\;

    \If{$i_{\text{th}}$ residual is greater than $T_h$}{ Agent
      $i$ is recognized as misbehaving.}

    }
  \SetAlgoCaptionSeparator{}
  \label{algo:local}
  \caption{\textit{Local Identification}$\quad$($j$-th agent)}
\end{algorithm}
\DecMargin{1em}

\begin{theorem}[Local identification]\label{local_identification}
  Let $V$ be the set of agents, let $K$ be the set of misbehaving
  agents, and let $A_d+\varepsilon \Delta$ be a consensus matrix,
  where $A_d$ is block diagonal, $\|\Delta\|_{\infty}=2$, and $0 \le
  \varepsilon \le 1$. Let each block $h$ of $A_d$ be a consensus
  matrix with agents $V_h\subseteq V$, and with connectivity $|K \cap
  V_h|+1$. There exists $\alpha >0$ and $\subscr{u}{max}\ge 0$, such
  that, if each input signal $u_i (t)$, $i \in K$, takes value in
  $\mathcal{U}=\{u : \varepsilon \alpha \subscr{u}{max} \le
  \|u\|_{\infty} \le \subscr{u}{max}\}$,\footnote{The norm
    $\|u\|_\infty$ is intended in the vector sense at every instant of
    time. The misbehaving input is here assumed to be nonzero at every
    instant of time.} then each well-behaving agent $j\in V_h$
  identifies in finite time the faulty agents $K\cap V_h$ by means of
  the Local Identification algorithm.
\end{theorem}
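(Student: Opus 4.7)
The plan is to decompose each residual generator output into a nominal component, produced by the block-diagonal system $A_d$ driven only by the local misbehavior $u_v$, and a perturbation component arising from the inter-block coupling $\varepsilon\Delta$ and from the external inputs $u_l$; we then choose $\alpha$ and $\subscr{u}{max}$ so that the nominal part of $\hat r_v$ dominates every perturbation, while the nominal part of $\hat r_i$ is identically zero and only the small perturbation survives.

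Concretely, I would write $\hat r_v = r_v^{\circ} + \delta_v$ and $\hat r_i = r_i^{\circ} + \delta_i$, where $r_v^{\circ}$ and $r_i^{\circ}$ are the outputs of the filters \eqref{res_v}--\eqref{res_i} when they are applied to the decoupled system $A_d$ with all inputs outside $V_h$ set to zero, while $\delta_v$ and $\delta_i$ are the responses of the filter dynamics driven by the additional disturbance $\varepsilon\Delta x + B_l u_l$. By the dead-beat construction of \cite{MAM-GCV-ASW-CM:89} applied to the block $A_h$, and since the connectivity hypothesis on $A_h$ rules out zero dynamics of $(A_h,B_v,C_j)$ (Section~\ref{resilience_section}), the nominal residuals satisfy $r_i^{\circ}\equiv 0$ whenever $u_i\equiv 0$, and $r_v^{\circ}\not\equiv 0$ whenever $u_v\not\equiv 0$.

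Next I would quantify both pieces. Lemma~\ref{quasi_stoc} guarantees that $A_d+\varepsilon\Delta$ maps bounded inputs to bounded state trajectories, while the dead-beat design makes $F_v$ and $F_i$ Schur-stable; linearity and BIBO stability of the cascaded dynamics then yield a uniform perturbation bound $\|\delta_v\|_\infty,\,\|\delta_i\|_\infty \le c_\delta\,\varepsilon\,\subscr{u}{max}$ for a constant $c_\delta$ depending only on $\Delta$, on the filter matrices, and on the closed-loop gains. The matching lower bound on $r_v^{\circ}$ is the hard step: after the finite dead-beat transient, $r_v^{\circ}(t)$ is a time-invariant linear map of a finite window of past samples of $u_v$, and injectivity of this map (guaranteed by the decoupling design together with the absence of zero dynamics) combined with the standing assumption that $u_v(t)$ does not vanish at any instant yields a uniform lower bound $\|r_v^{\circ}\|_\infty \ge c_v\,\|u_v\|_\infty$ for some $c_v>0$.

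Combining the three bounds via the triangle inequality gives $\|\hat r_v\|_\infty-\|\hat r_i\|_\infty \ge c_v\,\|u_v\|_\infty - 2c_\delta\,\varepsilon\,\subscr{u}{max}$. Setting $\alpha = 2c_\delta/c_v$, picking any $\subscr{u}{max}>0$, and selecting the threshold $T_h$ strictly between $c_\delta\,\varepsilon\,\subscr{u}{max}$ and $c_v\,\varepsilon\,\alpha\,\subscr{u}{max}-c_\delta\,\varepsilon\,\subscr{u}{max}$ ensures that, whenever $u_K\in\mathcal{U}$ at every time instant, $\hat r_v$ crosses $T_h$ while each $\hat r_i$ with $i\in V_h\setminus v$ stays below it, so the Local Identification algorithm flags exactly the agents in $v=K\cap V_h$ in finite time. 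The main obstacle will be to pin down the constant $c_v$ rigorously: because the dead-beat map is injective on sequences rather than on instantaneous vectors, a careful compactness-plus-persistent-excitation argument is needed to rule out near-zero residuals produced by a transiently small input, and this is precisely why the hypothesis $u_i(t)\in\mathcal{U}$ at every $t$ (rather than in an integral or asymptotic sense) is imposed.
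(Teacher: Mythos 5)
Your proposal follows essentially the same route as the paper's proof: split each residual over the finite dead-beat window into the nominal block-diagonal response (identically zero for well-behaving agents, bounded below for the local faulty set $v$ by absence of zero dynamics and the standing assumption that $u_v(t)$ never vanishes) plus an $O(\varepsilon\,\subscr{u}{max})$ perturbation from $\varepsilon\Delta$, $u_l$, and the initial condition, and then choose $\alpha$ so the lower bound dominates; this is exactly the paper's $\alpha > \bar c/\beta$ argument. The only correction needed is that $\alpha$ must be chosen \emph{strictly} greater than $2c_\delta/c_v$ --- with your choice $\alpha = 2c_\delta/c_v$ the interval you prescribe for the threshold $T_h$ is empty --- and, like the paper, you leave the existence of the uniform lower-bound constant ($c_v$, resp.\ $\beta$) asserted rather than fully derived.
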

\begin{proof}
    We focus on the agent $j \in V_h$, and, without loss of generality,
  we assume that $u_K(0)\neq 0$, and that the residual generators have
  a finite impulse response. Let $d_j=\|V_h\|$, and note that $d_j$
  time steps are sufficient for each agent $j \in V_h$ to identify the
  misbehaving agents. Let $u^t$ denote the input sequence up to time
  $t$. Let $v=K\cap V_h$, $l=K\setminus v$, and observe that
$
  \hat r_v(d_j)=\left[
  \begin{smallmatrix}
    H_vC_j & M_v
  \end{smallmatrix}
  \right]
\bar A_{\varepsilon,v}^{d_j}\bar
  x(0) + \hat h_v \star u_v^{d_j-1} + \hat h_l \star u_l^{d_j-1}$,
where $\hat h_v$ and $\hat h_l$ denote the impulse response from $u_v$
and $u_l$ respectively, and $\star$ denotes the convolution
operator. We now determine an upper bound for each term of $\hat
r_v(d_j)$. Let the misbehaving inputs take value in $\mathcal{U}=\{u :
\varepsilon \alpha \subscr{u}{max} \le \|u\|_{\infty} \le
\subscr{u}{max}\}$. By using the triangle inequality on the impulse
responses of the residual generator, it can be shown that
  $\| \hat h_l \star u_l^{d_j-1} \|_{\infty} \le \| h_l \star u_l^{d_j-1}\|_{\infty} +\varepsilon c_1 \subscr{u}{max}= \varepsilon c_1 \subscr{u}{max}$,
where $h_l$ denotes the impulse response form $u_l$ to $r_v$ of the
system \eqref{res_v}, and $c_1$ is a finite positive constant
independent of $\varepsilon$. Moreover, it can be shown that there
exist two positive constant $c_2$ and $c_3$ such that
  $\|\left[
  \begin{smallmatrix}
    H_vC_j & M_v
  \end{smallmatrix}
  \right]
\bar A_{\varepsilon,v}^{d_j}\bar x(0)\|_{\infty}
 \le \varepsilon c_2 \subscr{u}{max},
$
and
  $\min_{u_v \in \mathcal{U}} \| \hat h_v \star u_v^{d_j-1}
  \|_{\infty} \ge \min_{u_v \in \mathcal{U}} \| h_v \star
  u_v^{d_j-1}\|_{\infty} -\varepsilon c_3 \subscr{u}{max}$.
Analogously, for the residual generator associated with the
well-behaving agent $i$, we have
$
  \hat r_i(d_j)=\left[
  \begin{smallmatrix}
    H_iC_j & M_i
  \end{smallmatrix}
  \right]
\bar A_{\varepsilon,i}^{d_j}\bar
  x(0) + \hat h_v \star u_v^{d_j-1} + \hat h_l \star u_l^{d_j-1}$,
and hence
$
  \hat r_i(d_j)\le \varepsilon (c_4^{(i)}+c_5^{(i)}+c_6^{(i)}) \subscr{u}{max}$.
Let $\bar c = c_ 1 + c_2 +c_3 + \max_{i\in V_h\setminus v}
(c_4^{(i)}+c_5^{(i)}+c_6^{(i)})$, and let $\beta$ be such that
$\min_{u_v \in \mathcal{U}} \| h_v \star u_v^{d_j-1}\|_{\infty} >
\beta \subscr{u}{min}$. Then a correct identification of the
misbehaving agents $v$ takes place if $ \beta \subscr{u}{min} = \beta
\varepsilon \alpha \subscr{u}{max}> \varepsilon \bar c
\subscr{u}{max}$, and hence if $\alpha > \bar c / \beta$.
\end{proof}

Notice that the constant $\alpha$ in Theorem
\ref{local_identification} can be computed by bounding the infinity
norm of the impulse response of the residual generators. An example is
in Section \ref{example_approx}. A procedure to achieve local
detection and identification of misbehaving agents is in Algorithm
\ref{algo:local}, where $A_d^h$ denotes the $h$-th block of $A_d$, and
$T_h$ the corresponding threshold value. Observe that in the Local
Identification procedure an agent only performs local computation, and
it is assumed to have only local knowledge of the network structure.

\begin{remark}
  It is a nontrivial fact that the misbehaving agents become locally
  identifiable depending on the magnitude of $\varepsilon$. Indeed, as
  long as $\varepsilon > 0$, the effect of the perturbation
  $\varepsilon \Delta$ on the residuals becomes eventually relevant
  and prevents, after a certain time, a correct identification of the
  misbehaviors \cite{RP-PK:81}.  \oprocend
\end{remark}

\section{Numerical examples}\label{examples}

\subsection{Complete detection and identification}
Consider the network of Fig. \ref{2rombi}, and let $A$ be a randomly
chosen consensus matrix. In particular, let
\begin{align*}
  A=\left[\begin{smallmatrix}
      0.2795  &  0.1628   &      0  &  0.1512 &   0.4066 &        0  &       0 & 0\\
      0.0143  &  0.3363   & 0.3469  &       0 &        0 &   0.3025  &       0 & 0\\
      0  &  0.0718   & 0.1904  &  0.2438 &        0 &        0  &  0.4941 & 0\\
      0.0844  &       0   & 0.4457  &  0.0660 &        0 &        0  &       0 & 0.4040\\
      0.1709  &       0   &      0  &       0 &   0.2694 &   0.2472  &       0 & 0.3125\\
      0  &  0.4199   &      0  &       0 &   0.1575 &   0.3293  &  0.0932 & 0\\
      0  &       0   & 0.0174  &       0 &        0 &   0.4241  &  0.2850 & 0.2735\\
      0 & 0 & 0 & 0.3024 & 0.2039 & 0 & 0.2065 & 0.2873
      \end{smallmatrix}\right].
\end{align*}
The network is $3$-connected, and it can be verified that for any set
$K$ of $3$ misbehaving agents, and for any observer node $j$, the
triple $(A,B_K,C_j)$ is left-invertible. Also, for any set $K$ of
cardinality $2$, and for any node $j$, the triple $(A,B_K,C_j)$ has no
invariant zeros. As previously discussed, any well-behaving node can
detect and identify up to $2$ faulty agents, or up to $1$ malicious
agent. Consider the observations of the agent $1$, and suppose that
the agents $\{3,7\}$ inject a random signal into the network. As
described in Algorithm \ref{algorithm_1}, the agent $1$ designs the
residual generator filters and computes the residual functions for
each of the $\binom{7}{3}$ possible sets of misbehaving nodes, and
identify the well-behaving agents. Consider for example the system
$x(t+1)=Ax(t)+B_3u_3(t)+B_4u_4(t)+B_7u_7(t)$, and suppose we want to
design a filter of the form \eqref{res_gen} which is only sensible to
the signal $u_4$. The unobservability subspace
$\mathcal{S}^M_{\{3,7\}}=(\mathcal{V}^{*}_{\{3,7\}} +
\mathcal{S}^{*}_{\{3,7\}})$, is
\begin{align*}
  \mathcal{S}^M_{\{3,7\}}=\Image\left(\left[\begin{smallmatrix}
      0  &  0 &   0 &    0 &         0\\
      0  &  0 & 0 &  -0.6624  &       0\\
      0  & 1  & 0  & 0  &       0\\
      0 &  0 &  -0.4740 &  -0.6597 &    0\\
      0   & 0  & -0.8798   & 0.3548   & 0\\
      0.4116 & 0 &  -0.0327 &   0.0132  &       0\\
      0        & 0        & 0        & 0  & 1 \\
      0.9114 & 0 & 0.0148 & -0.0060 & 0
        \end{smallmatrix}\right]\right),
\end{align*}
and a possible choice for the matrices of the residual generator is
\begin{align*}
  F&=\left[
    \begin{smallmatrix}
      0 & 0 & 0\\
      0.0014 & -0.3222 & -0.3424\\
      -0.0013 & 0.3031 & 0.3222
    \end{smallmatrix}
  \right],
  E=\left[
    \begin{smallmatrix}
      0.2795 &  0.1628&   0.1512&   0.4066\\
      0.0138  & 0.4982 &   -0.2280  & 0.2003\\
      0.0082&    -0.6095&   0.3012 &   -0.1568
    \end{smallmatrix}
  \right],\\
   M&=\left[
    \begin{smallmatrix}
      -1   &0   & 0\\
      0   & 0.9999   & 0.0128
    \end{smallmatrix}
  \right], \text{ and }
  H=\left[
   \begin{smallmatrix}
     1  &0  & 0  & 0\\
     0  &  -0.7491 &  0.5832 &   -0.3142
    \end{smallmatrix}
  \right].
\end{align*}
It can be checked that, independent of the initial condition of the
network, the residual function associated with the input $4$ is zero,
as in \ref{rombi}, so that the agent $4$ is regarded as
well-behaving. Agents $3$, $7$, instead, have always nonzero
residual functions, and are recognized as misbehaving.
\begin{figure}[tb]
  \centering \subfigure[]{
    \includegraphics[width=.15\columnwidth]{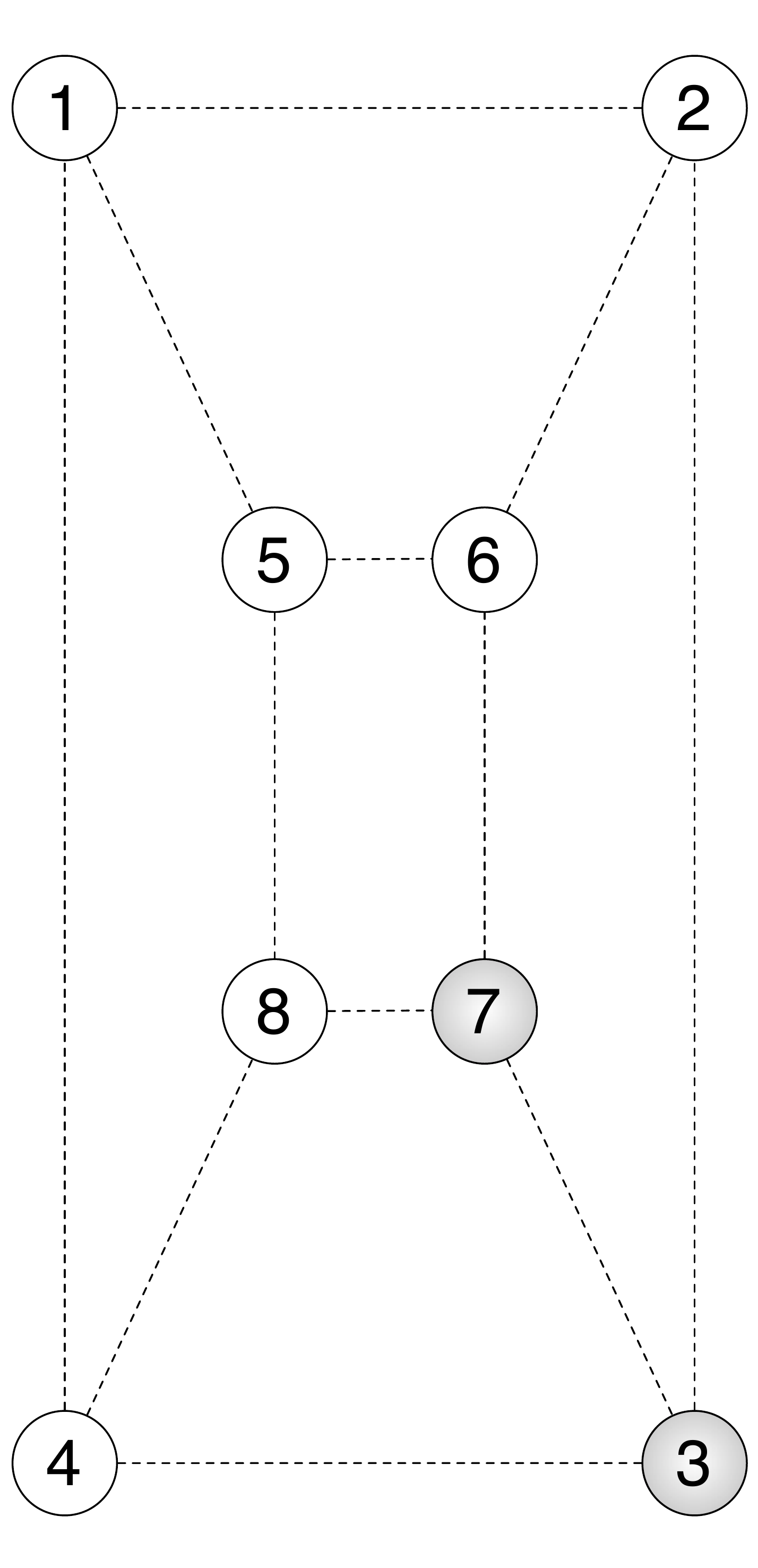}
    \label{2rombi}
  } \subfigure[]{
    \includegraphics[width=.36\columnwidth]{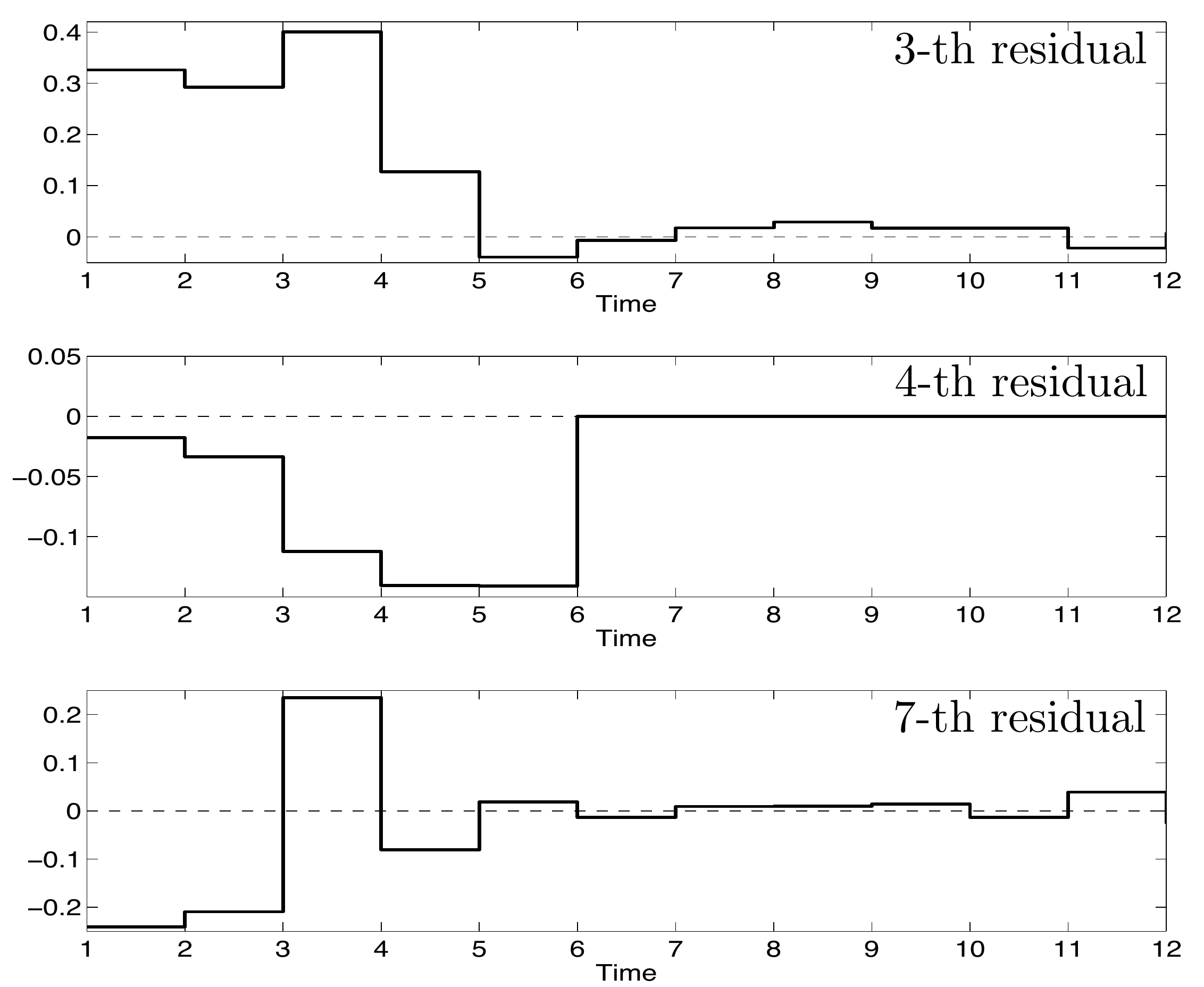}
    \label{rombi}
  }
 \caption[Optional caption for list of figures]{In Fig. \ref{2rombi} a
    consensus network where the nodes $3$ and $7$ are faulty. In
    Fig. \ref{rombi} the residual functions computed by the agent
    $1$ under the hypothesis that the misbehaving set is
    $\{3,4,7\}$.}
\end{figure}
If the misbehaving nodes are allowed to be malicious, then no more
than $1$ misbehaving node can be tolerated. Indeed, because of Theorem
\ref{left_connectivity}, there exists a set $\bar K$ of $4$
misbehaving agents such that the system $(A,B_{\bar K},C_1)$ exhibits
nontrivial zero dynamics. For instance, let $\bar K=\{2,4,6,8\}$, and
note that if the initial condition $x(0)$ belongs to
\begin{align*}
  \Vtar=\Image \left( \left[
    \begin{smallmatrix}
      0 & 0 & 0\\
      0 & 0 & 0\\
      1 & 0 & 0\\
      0 & 0 & 0\\
      0 & 0 & 0\\
      0 & 0.7842 & 0\\
      0 & 0 & 1\\
      0 & -0.6205 & 0
    \end{smallmatrix}
    \right]
  \right),
\end{align*}
then the input $u_K(t)=F_b x(t)$,\footnote{The malicious agents need
  to know the entire state to implement this feedback law. The case in
  which only local feedback is allowed is left as a direction for
  future research, for which the result in \cite{SS-CNH:08fault} is
  meaningful.} where
\begin{align*}
  F_b=\left[
    \begin{smallmatrix}
      0 & 0 & -0.3469 & 0 & 0 & -0.1860 & 0 & 0.1472\\
      0 & 0 & -0.4457 & 0 & 0 & 0.1966 & 0 & -0.1555\\
      0 & 0 & 0 & 0 & 0 & -0.1063 & -0.1148  & 0.0841\\
      0 & 0 & 0 & 0 & 0 & 0.0636 & -0.1894 & -0.0503
    \end{smallmatrix}
    \right],
\end{align*}
is such that $y_1(t)=0$ for all $t\ge 0$. Therefore, the two systems
$(A,B_{\{2,4\}},C_1)$ and $(A,B_{\{6,8\}},C_1)$, with initial
conditions $x_1(0)$ and $x_2(0)=x_1(0)-x(0)$, and inputs
\begin{align*}
  u_{\{2,4\}}(t)&=\left[
    \begin{smallmatrix}
      1 & 0 & 0 & 0\\
      0 & 1 & 0 & 0\\
    \end{smallmatrix}
    \right]F_b(x_1(t)-x_2(t)),\;
  u_{\{6,8\}}(t)=\left[
    \begin{smallmatrix}
      0 & 0 & 1 & 0\\
      0 & 0 & 0 & 1\\
    \end{smallmatrix}
    \right]F_b(x_2(t)-x_1(t)),
\end{align*}
have exactly the same output dynamics, so that the two sets $\{2,4\}$
and $\{6,8\}$ are indistinguishable by the agent $1$.

\subsection{Local detection and identification}\label{example_approx}
\begin{figure}
  \centering \subfigure[]{
    \includegraphics[width=.4\columnwidth]{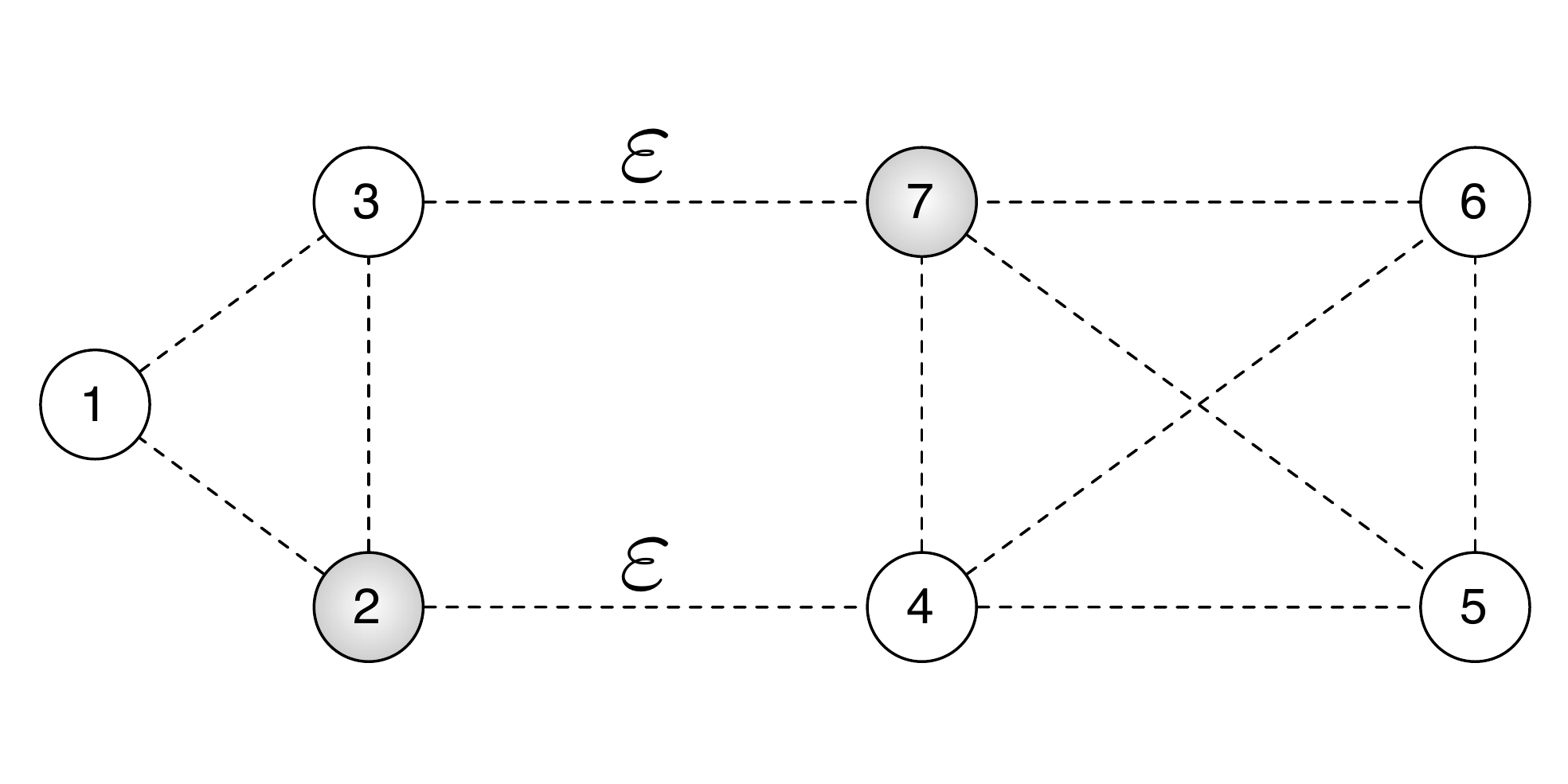}
    \label{approx_network}
  } \subfigure[]{
    \includegraphics[width=.4\columnwidth]{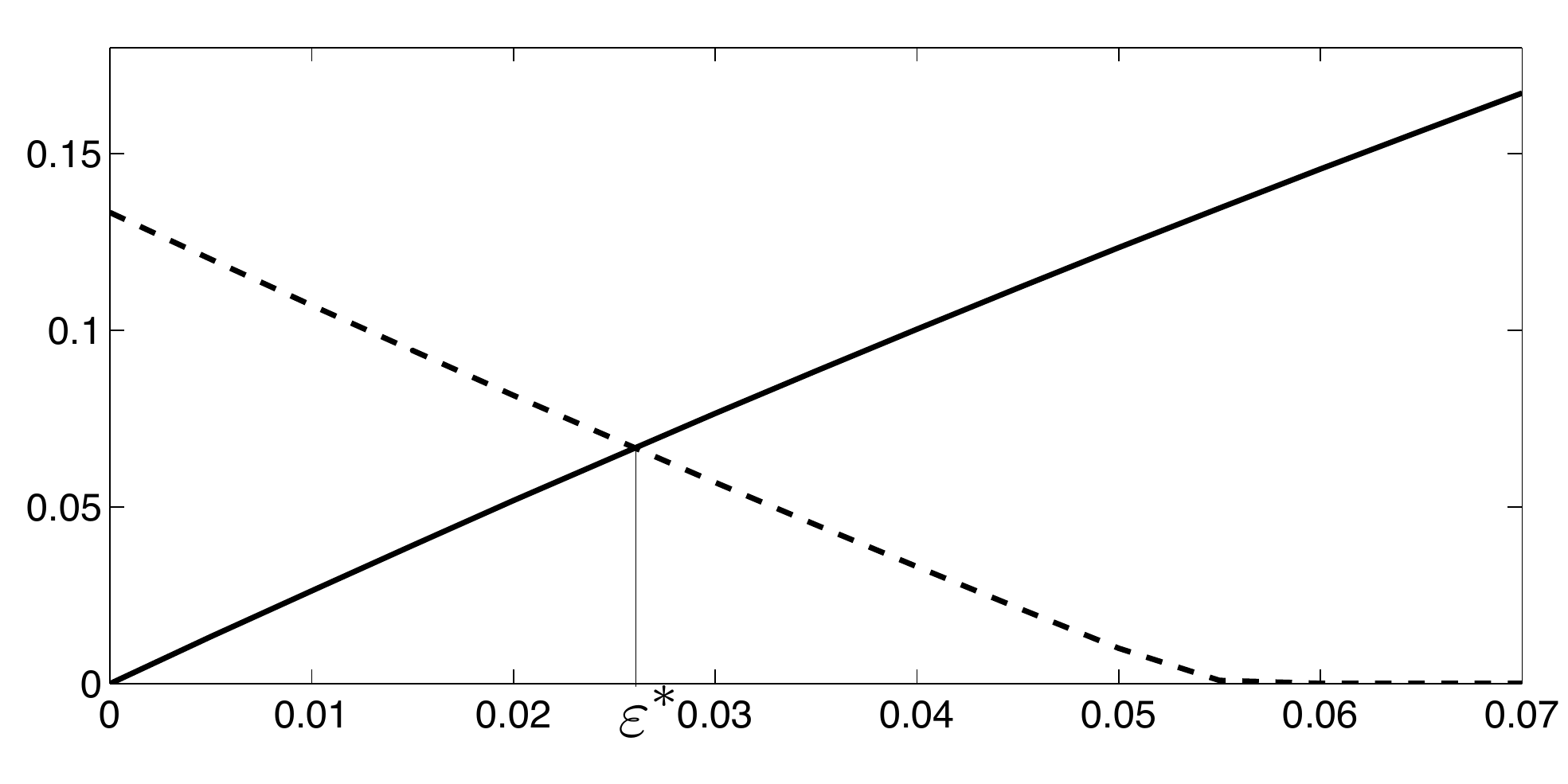}
    \label{maxmin}
  }
 \caption[Optional caption for list of figures]{In
    Fig. \ref{approx_network} a consensus network with weak
    connections. In Fig. \ref{maxmin} the solid line corresponds to
    the largest magnitude of the residual associated with the
    well-behaving agent $3$, while the dashed line denotes the
    smallest magnitude of the residual associated with the misbehaving
    agent $2$, both as a function of the parameter $\varepsilon$. If
    $\varepsilon\le \varepsilon^*$, then there exists a threshold that
    allows to identify the misbehaving agent $2$.}
\end{figure}
Consider the consensus network in Fig. \ref{approx_network}, where
$A=A_d+\varepsilon \Delta$, $\varepsilon \in \real$, $0 \le
\varepsilon \le 1$, and
\begin{align*}
  A_d=\left[
    \begin{smallmatrix}
      \frac{1}{3} & \frac{1}{3} & \frac{1}{3} & 0 & 0 & 0 & 0\\
      \frac{1}{3} & \frac{1}{3} & \frac{1}{3} & 0 & 0 & 0 & 0\\
      \frac{1}{3} & \frac{1}{3} & \frac{1}{3} & 0 & 0 & 0 & 0\\
      0 & 0 & 0 & \frac{1}{4} & \frac{1}{4} & \frac{1}{4} & \frac{1}{4}\\
      0 & 0 & 0 & \frac{1}{4} & \frac{1}{4} & \frac{1}{4} & \frac{1}{4}\\
      0 & 0 & 0 & \frac{1}{4} & \frac{1}{4} & \frac{1}{4} & \frac{1}{4}\\
      0 & 0 & 0 & \frac{1}{4} & \frac{1}{4} & \frac{1}{4} & \frac{1}{4}
    \end{smallmatrix}
    \right],
    \Delta=\left[
    \begin{smallmatrix}
      0 & 0 & 0 & 0 & 0 & 0 & 0\\
      0 & -1 & 0 & 1 & 0 & 0 & 0\\
      0 & 0 & -1 & 0 & 0 & 0 & 1\\
      0 & 0 & 1 & 0 & -1 & 0 & 0\\
      0 & 0 & 0 & 0 & 0 & 0 & 0\\
      0 & 0 & 0 & 0 & 0 & 0 & 0\\
      0 & 0 & 1 & 0 & 0 & 0 & -1
    \end{smallmatrix}
    \right].
\end{align*}
Let $K=\{2,7\}$ be the set of misbehaving agents, let $0.1 \le
u_2(t),u_7(t) \le 3$ at each time $t$, and let $\|x(0)\|_{\infty} \le
1$. Consider the agent $1$, and let $(F_2,E_2,M_2,H_2)$ and
$(F_3,E_3,M_3,H_3)$ be the residual generators as in \eqref{res_v} and
\eqref{res_i}, respectively, where
\begin{align*}
  F_2&=\left[
    \begin{smallmatrix}
      -1/3 & -1/3\\
      1/3 & 1/3
    \end{smallmatrix}
    \right],\enspace
    E_2=\left[
    \begin{smallmatrix}
      -2/3 & 0 & -1/3\\
      2/3 & 0 & 1/3
    \end{smallmatrix}
    \right],
    M_2=\left[
    \begin{smallmatrix}
      1 & 0\\
      0 & -1
    \end{smallmatrix}
    \right],\enspace
    H_2=\left[
    \begin{smallmatrix}
      1 & 0 & 0\\
      0 & 1 & 0
    \end{smallmatrix}
    \right],
\end{align*}
and
\begin{align*}
  F_3&=\left[
    \begin{smallmatrix}
      -1/3 & 1/3\\
      -1/3 & 1/3
    \end{smallmatrix}
    \right],\enspace
    E_3=\left[
    \begin{smallmatrix}
      -2/3 & -1/3 & 0\\
      -2/3 & -1/3 & 0
    \end{smallmatrix}
    \right],
    M_3=\left[
    \begin{smallmatrix}
      -1 & 0\\
      0 & 1
    \end{smallmatrix}
    \right],\enspace
    H_3=\left[
    \begin{smallmatrix}
      -1 & 0 & 0\\
      0 & 0 & 1
    \end{smallmatrix}
    \right].
\end{align*}
Let $\hat h_2^3$ (resp. $\hat h_7^3$) be the impulse response from the
input $u_2$ (resp. $u_7$) to $\hat r_3$, and let $u_2^1$
(resp. $u_7^1$) denote the input signal $u_2$ (resp. $u_7$) up to time
$1$. Note that the misbehaving agent can be identified after $2$ time
steps, and that the residual associated with the agent $3$ is
\begin{align*}
  \hat r_3(2)=
  \left[
  \begin{smallmatrix}
    H_3C_1 & M_3
  \end{smallmatrix}
  \right]
  \left[
  \begin{smallmatrix}
    A_d+\varepsilon\Delta & 0\\
    E_3C_1 & F_3
  \end{smallmatrix}
  \right]^2
  \left[
  \begin{smallmatrix}
    x(0)\\
    0
  \end{smallmatrix}
  \right]
  +\hat h_2^3 \star u_2^1 + \hat h_7^3 \star u_7^1,
\end{align*}
where $\star$ denotes the convolution operator. After some computation
we obtain
\begin{align*}
  \hat r_3(2)=\varepsilon \left[
  \begin{smallmatrix}
    H_3C_1 & M_3
  \end{smallmatrix}
  \right]
  \left[
  \begin{smallmatrix}
    A_d\Delta +\Delta A_d +\varepsilon  \Delta^2& \Delta B_2  & \Delta B_7\\
    E_3C_1\Delta & 0 & 0
  \end{smallmatrix}
  \right]
  \left[
  \begin{smallmatrix}
    x(0)\\
    u_2(0)\\
    u_7(0)
  \end{smallmatrix}
  \right]
\end{align*}
and, analogously,
\begin{align*}
  \hat r_2(2)&=\varepsilon \left[
  \begin{smallmatrix}
    H_2C_1 & M_2
  \end{smallmatrix}
  \right]
  \left[
  \begin{smallmatrix}
    A_d\Delta +\Delta A_d +\varepsilon  \Delta^2& \Delta B_2  & \Delta B_7\\
    E_2C_1\Delta & 0 & 0
  \end{smallmatrix}
  \right]
    \left[
  \begin{smallmatrix}
    x(0)\\
    u_2(0)\\
    u_7(0)
  \end{smallmatrix}
  \right] 
  +
  \left[
  \begin{smallmatrix}
    H_2C_1 & M_2
  \end{smallmatrix}
  \right]
  \left[
  \begin{smallmatrix}
    A_dB_2 & B_2 \\
    E_2C_1B_2 &0
  \end{smallmatrix}
  \right]
  \left[
  \begin{smallmatrix}
    u_2(0)\\
    u_2(1)
  \end{smallmatrix}
  \right]
\end{align*}
Recall that the agent $1$ is able to identify the misbehaving agent
$2$ if, independent of $u_2^1$ and $u_7^1$, there exists a threshold
$T$ such that $\|\hat r_2(2)\|_{\infty}\ge T$, and $\|\hat
r_3(2)\|_{\infty}< T$. The behavior of $\|\hat r_2(2)\|_{\infty}$ and
$\|\hat r_3(2)\|_{\infty}$ as a function of $\varepsilon$ is in
Fig. \ref{maxmin}. Note that for $\varepsilon=\varepsilon^*=0.026$ we
have $\|\hat r_2(2)\|_{\infty}=\|\hat r_3(2)\|_{\infty}=0.07$.
For instance, if $\varepsilon=0.01$, then it can be verified that
$\|\hat r_2(2)\|_{\infty} > 0.1$, and $\|\hat r_3(2)\|_{\infty} <
0.05$. It follows that a threshold $T=0.1$ allows the agent $1$ to
identify the misbehaving agent $2$. On the other hand, if $\varepsilon
= 0.03$, then $\|\hat r_2(2)\|_{\infty} \ge 0.01$, and $\|\hat
r_3(2)\|_{\infty} \le 0.12$, so that the misbehaving agent $2$ may
remain unidentified. Indeed, if $x(0)=\left[\begin{smallmatrix}1 & 1 &
    1 & -1 & -1 & -1 & -1\end{smallmatrix}\right]$,
$u_2^1=u_7^1=\left[\begin{smallmatrix}0.1 &
    0.1\end{smallmatrix}\right]$, then $\|\hat r_2(2)\|_{\infty}=0.01$
and $\|\hat r_3(2)\|_{\infty} = 0.12$, so that the agent $3$ is
recognized as misbehaving instead of the agent $2$.


As a final remark, note that the larger the consensus network, the
more convenient the proposed approximation procedure becomes. For
instance, consider the network presented in \cite{EB-MA:07}, and here
reported in Fig. \ref{big_clusters}. Such a clustered interconnection
structure, in which the edges connecting different clusters have a
small weight, may be preferable in many applications because much
simpler and efficient protocols can be implemented within each
cluster. Assume the presence of a misbehaving agent in each cluster,
and consider the residuals computed after $5$ steps of the consensus
algorithm. Let $\varepsilon$ be the weight of the edges connecting
different clusters. Fig. \ref{maxmin_clusters} shows, as a function of
$\varepsilon$, the smallest magnitude of the residual associated with
a misbehaving agent (dashed line) versus the largest magnitude of the
residual associated with a well-behaving agent (solid line). If
$\varepsilon$ is sufficiently small, then our local identification
method allows each well-behaving agent to promptly detect and identify
the misbehaving agents belonging to the same group, and hence to
restore the functionality of the network. For instance, if
$\varepsilon \le 0.01$, then, following Theorem
\ref{local_identification}, if the misbehaving input take value in
$\{u: 0.1 \le |u| \le 3\}$, then a misbehaving agent is correctly
detected and identified by a well-behaving agent.


\begin{figure}
  \centering \subfigure[]{
    \includegraphics[width=.4\columnwidth]{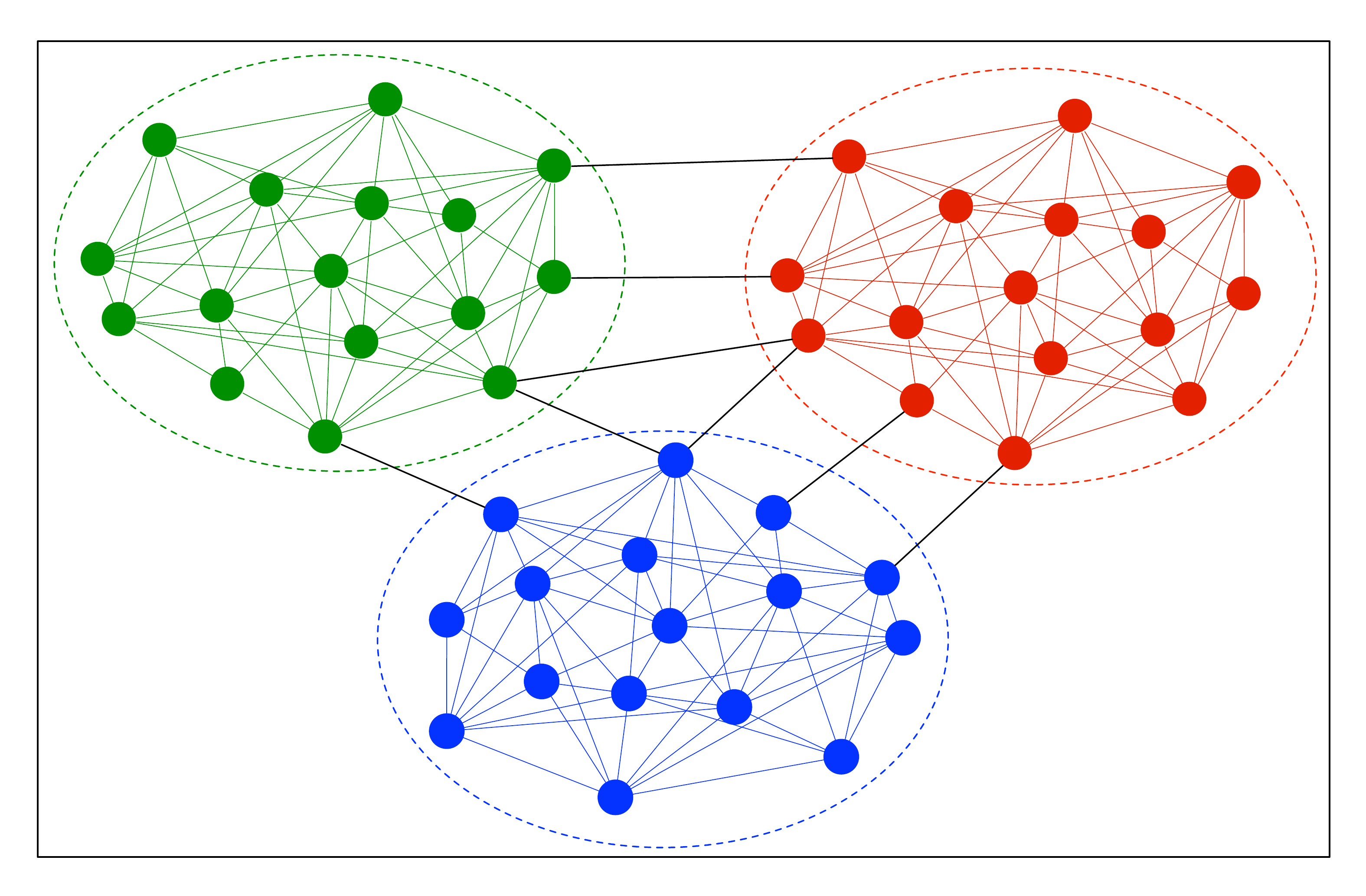}
    \label{big_clusters}
  } \subfigure[]{
    \includegraphics[width=.4\columnwidth]{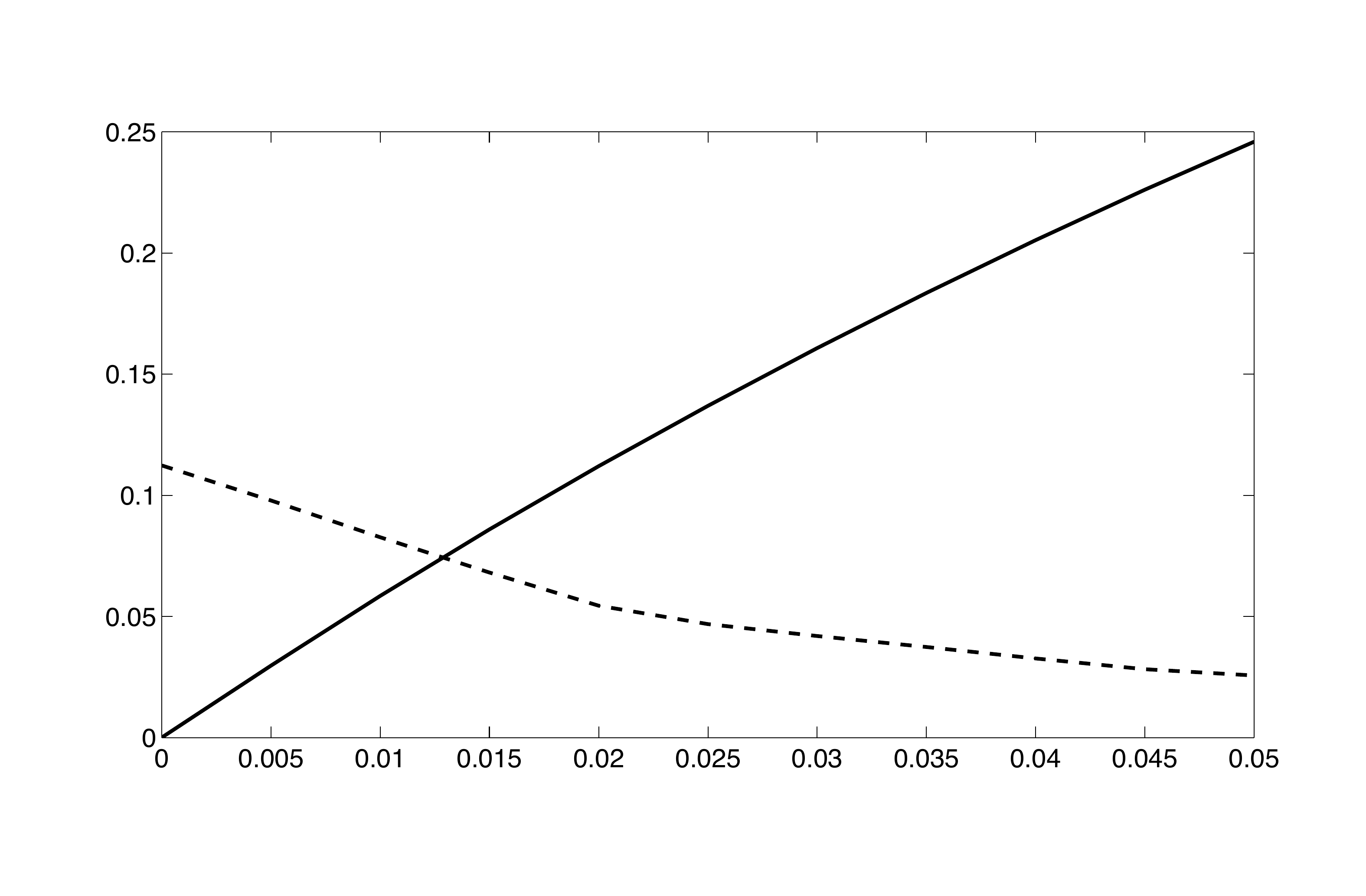}
    \label{maxmin_clusters}
  }
 \caption[Optional caption for list of figures]{In
    Fig. \ref{big_clusters} a consensus network partitioned into $3$
    areas. Each agent identifies the neighboring misbehaving agents
    by knowing only the topology of the subnetwork it belongs to. In
    Fig. \ref{maxmin_clusters} the smallest magnitude of the residual
    associated with a misbehaving agent (dashed line) and the largest
    magnitude of the residual associated with a well-behaving agent
    (solid line) are plotted as a function of $\varepsilon$. If
    $\varepsilon$ is sufficiently small, then local detection and
    identification is possible.}
\end{figure}

\section{Conclusion}\label{conclusions}
The problem of distributed reliable computation in networks with
misbehaving nodes is considered, and its relationship with the fault
detection and isolation problem for linear systems is discussed. The
resilience of linear consensus networks to external attacks is
characterized through some properties of the underlying communication
graph, as well as from a system-theoretic perspective. In almost any
linear consensus network, the misbehaving components can be correctly
detected and identified, as long as the connectivity of the
communication graph is sufficiently high. Precisely, for a linear
consensus network to be resilient to $k$ concurrent faults, the
connectivity of the communication graph needs to be $2k+1$, if
Byzantine failures are allowed, and $k+1$, otherwise. Finally, for the
faulty agents case, good performance can be obtained even if the
agents do not know the entire network topology, and they are
subject to memory or computation constraints.

Interesting aspects requiring further investigation include a
characterization of the gain between the inputs of a set of
misbehaving agents and the observations of an agent $j$. Depending on
the magnitude of such gain, some undetectable behaviors may not be
feasible for a set of misbehaving agents. The resilience properties of
specific consensus protocols, e.g., those resulting from an
optimization process, should also be studied. Finally, the clustering
of a large network into smaller parts is crucial for the performance
of the proposed local identification procedure, and it requires
additional research.


\bibliographystyle{IEEEtran}

\end{document}